\theoremstyle{plain}
\newtheorem{theorem}{Theorem}
\newtheorem{corollary}{Corollary}[theorem]
\newtheorem{lemma}{Lemma}
\newtheorem{proposition}{Proposition}
\theoremstyle{remark}
\newtheorem{remark}{Remark}
\newtheorem{remarkone}{Remark 1}
\newtheorem{remarktwo}{Remark 2}
\def\al{\alpha}
\def\be{\beta}
\def\vGa{\varGamma}
\def\ga{\gamma}
\def\de{\delta}
\def\vep{\varepsilon}
\def\ze{\zeta}
\def\et{\eta}
\def\ka{\kappa}
\def\la{\lambda}
\def\rh{\rho}
\def\vSi{\varSigma}
\def\si{\sigma}
\def\ta{\tau}
\def\ph{\phi}
\def\ps{\psi}
\def\om{\omega}
\def\fH{\mathfrak{H}}
\bmdefine{\bon}{1}
\bmdefine{\ba}{a}
\bmdefine{\bb}{b}
\bmdefine{\bc}{c}
\bmdefine{\bd}{d}
\bmdefine{\bde}{e}
\bmdefine{\bm}{m}
\bmdefine{\bs}{s}
\bmdefine{\bx}{x}
\bmdefine{\by}{y}
\bmdefine{\bz}{z}
\bmdefine{\bal}{\alpha}
\bmdefine{\bbe}{\beta}
\bmdefine{\bga}{\gamma}
\bmdefine{\bom}{\omega}
\bmdefine{\bzr}{0}
\def\fr{\frac}
\def\wt#1{\widetilde{#1}}
\def\wh#1{\widehat{#1}}
\def\re{\operatorname{Re}}
\def\im{\operatorname{Im}}
\def\cA{\mathcal{A}}
\def\cB{\mathcal{B}}
\def\cC{\mathcal C}
\def\cS{\mathcal{S}}
\def\bbZ{\mathbb{Z}}
\def\BZ2{\mathbb{Z}^2}
\def\bZ2{\mathbb{Z}^2}
\def\BR{\mathbb{R}}
\def\bR{\mathbb{R}}
\def\BC{\mathbb{C}}
\def\bC{\mathbb{C}}
\def\dps{\displaystyle}
\def\del{\partial}
\def\sump{\mathop{\sideset{}{'}\sum}}
\def\sgn{\operatorname{sgn}}
\def\bsm{\begin{smallmatrix}}
\def\esm{\end{smallmatrix}}
\def\lg{\langle}
\def\rg{\rangle}
\def\lf{\lfloor}
\def\rf{\rfloor}
\def\tms{\times}
\def\res{\mathrm{Res}}
\def\wt{\widetilde}
\def\bg{\begin}
\def\ed{\end}
\def\vG#1#2{\vGa\Bigl(\begin{matrix}#1\\ #2\end{matrix}\Bigr)}
\def\1F1#1#2#3{{}_1F_1\Bigl(\begin{matrix}#1\\ #2\end{matrix};#3\Bigr)}
\def\t1F1#1#2#3{{}_1F_1(\begin{smallmatrix}#1\\ #2\end{smallmatrix};#3)}
\def\2F1#1#2#3{{}_2F_1\Bigl(\begin{matrix}#1\\ #2\end{matrix};#3\Bigr)}
\def\R#1#2#3#4{R_{#1}\Bigl(\begin{matrix}#2\\ #3\end{matrix};#4\Bigr)}
\def\R*#1#2#3#4{R^{\ast}_{#1}\Bigl(\begin{matrix}#2\\ #3\end{matrix};#4\Bigr)}
\def\Fz2#1#2#3#4{F_{\mathbb{Z}^2}(#1;#2;#3;#4)}
\def\tFz2#1#2#3#4{\textstyle F_{\mathbb{Z}^2}(#1;#2;#3;#4)}
\renewcommand{\subjclass}[1]{\footnote[0]{2020 \textit{Mathematics Subject Classification.} #1.}}
\begin{document}
\title[A class of generalized holomorphic Eisenstein series]{Asymptotic expansions for a 
class of generalized holomorphic Eisenstein series, Ramanujan's formula for $\ze(2k+1)$, 
Weierstra{\ss}' elliptic and allied functions} 
\author[Katsurada and Noda]{Masanori KATSURADA and Takumi Noda}
\address{\footnotesize Department of Mathematics, Hiyoshi Campus, Keio 
University, 4--1--1 Hiyoshi, Kouhoku-ku, Yokohama 223--8521, Japan;
Department of Mathematics, College of Engineering, Nihon University, 1 Nakagawara, 
Tokusada, Tamuramachi, K{\^o}riyama, Fukushima 936--8642, Japan}
\email{\tt katsurad@z3.keio.jp; noda.takumi@nihon-u.ac.jp}
\subjclass{Primary 11M36; Secondary 11E45, 11M35, 11F11}
\keywords{Eisenstein series, asymptotic expansion, Ramanujan's formula, Weierstra{\ss}'
elliptic function, Mellin-Barnes integral} 
\begin{abstract}
For a class of generalized holomorphic Eisenstein series, we establish complete 
asymptotic expansions (Theorems~1~and~2), which together with the explicit 
expression of the latter remainder (Theorem~3), naturally transfer to several new 
variants of the celebrated formulae of Euler and of Ramanujan for specific values 
of the Riemann zeta-function (Theorem~4 and Corollaries~4.1--4.5), and to various 
modular type relations for the classical Eisenstein series of any even integer weight 
(Corollary~4.6) as well as for Weierstra{\ss}' elliptic and allied functions (Corollaries~4.7--4.9). 
Crucial r{\^o}les in the proofs are played by certain Mellin-Barnes type integrals, 
which are manipulated with several properties of confluent hypergeometric functions.
\end{abstract} 
\maketitle
\section{Introduction}
Let $\mathfrak{H}^{\pm}$ be the complex upper and lower 
half-planes respectively, where the argument of each sheet is chosen as 
\begin{align*}
\mathfrak{H}^+=\{z\in\wt{\mathbb{C}^{\times}}\mid 0<\arg z<\pi\}
\qquad
\text{and}
\qquad
\mathfrak{H}^-=\{z\in\wt{\mathbb{C}^{\times}}\mid -\pi<\arg z<0\}.
\end{align*} 
Throughout the paper, $s$ is a complex variable, $\al$, $\be$, $\mu$ and $\nu$ real parameters, 
$z$ a complex parameter with $z\in\mathfrak{H}^{\pm}$. It is frequently used in the sequel the  notations $e(s)=e^{2\pi is}$ and $\vep(z)=\sgn(\arg z)$ for $|\arg z|>0$, and the parameter 
$\ta=e^{\mp\vep(z)\pi i/2}z$, where $\ta$ varies within the sector $|\arg\ta|<\pi/2$. 

We now define for $z\in\fH^+$ the generalized Eisenstein series $F^{\pm}_{\mathbb{Z}^2}(s;\al,\be;\mu,\nu;z)$ by 
\begin{align*}
F_{\mathbb{Z}^2}^{\pm}(s;\al,\be;\mu,\nu;z)
=\sump_{m,n=-\infty}^{\infty}\fr{e\{(\al+m)\mu+(\be+n)\nu\}}{\{\al+m+(\be+n)z\}^{s}}
\qquad(\re s=\si>2),
\tag{1.1}
\end{align*}
where the primed summation symbols hereafter indicate omission of the impossible terms of the form $1/0^s$, and the branch of each summand is chosen such that $\arg\{(\al+m)+(\be+n)z\}$ falls within the range $]-\pi,\pi]$ in $F^+_{\mathbb{Z}^2}$, while within $[-\pi,\pi[$ in $F^-_{\mathbb{Z}^2}$. The main object of study is the arithmetical mean 
\begin{align*}
\Fz2{s}{\al,\be}{\mu,\nu}{z}
=\fr12\bigl\{F^+_{\mathbb{Z}^2}(s;\al,\be;\mu,\nu;z)
+F^-_{\mathbb{Z}^2}(s;\al,\be;\mu,\nu;z)\bigr\},
\tag{1.2}
\end{align*}
for which we shall show that complete asymptotic expansions exist when both $\ta\to\infty$ (Theorem~1) and $\ta\to0$ (Theorem~2), whose proofs, by means of Mellin-Barnes type integrals, 
lead us to extract exponentially small order terms from the latter remainder (Theorem~3). 

Let $\ze(s)$ denote the Riemann zeta-function. The asymptotic series, or even its first 
derivative, in Theorem~2 in fact terminates up to finite terms if $s$ is at any integer point; 
this, combined with Theorems~1~and~3, naturally transfers to several new variants of the 
celebrated formulae of Euler and of Ramanujan for specific values of $\ze(s)$ (Theorem~4 and  Corollaries~4.1--4.5), and also to (quasi) modular relations for the classical Eisenstein series of integer weights (Corollary~4.6) as well as for Weierstra{\ss}' elliptic and allied functions (Corollaries~4.7--4.9).  It is worth noting that a hidden (but crucial) r{\^o}le is played by the connection formula (2.25) below for Kummer's confluent hypergeometric functions in producing various Ramanujan type formulae for specific values of zeta-functions, for the classical Eisenstein series as well as for Weierstra{\ss}' elliptic and allied functions.

We give here an overview of the research related to holomorphic and non-holomorphic Eisenstein 
series of complex variable(s). Lewittes \cite{lewittes1971} first obtained a transformation 
formula for $F(s;z)=\Fz2{s}{0,0}{0,0}{z}$, which was applied to show a modular relation connecting 
$F(2;z)$ and $F(2,-1/z)$; this transformation formula can be viewed as a prototype of our 
Theorem~1. He established further in \cite{lewittes1972} a transformation formula for a more general 
$\Fz2{s}{\al,\be}{0,0}{z}$, which was extensively applied to study its modular relations when the 
modular group $\mathit{SL}_2(\bbZ)$ acts on the associated parameter $z\in\fH^+$. Subsequent 
research was made by Berndt~\cite{berndt1973}, who especially treated in this respect a class of 
generalized Dedekind eta function and Dedekind sums. Berndt \cite{berndt1977} made further 
research into this direction in connection with Euler's and Ramanujan's formulae for specific values 
of $\ze(s)$, while he \cite{berndt1975} also studied certain character analogues of 
$\Fz2{s}{\al,\be}{0,0}{z}$ to show Ramanujan type formulae for Dirichlet $L$-functions. 
Matsumoto~\cite{matsumoto2003}, on the other hand, more recently derived complete asymptotic 
expansions for $F(s;z)$ when both $z\to0$ and $z\to\infty$ through $\fH^+$; the former can be 
viewed as a prototype of our Theorem~2. A transformation formula for a two variable analogue 
of (1.1) was obtained by Lim~\cite{lim2014}, while the first author \cite{katsurada2015} derived complete asymptotic expansions for a generalized {\it non}-holomorphic Eisenstein series of the form
\begin{align*}
\ps_{\bbZ^2}(s;\al,\be;\mu,\nu;z)
&=\sump_{m,n=-\infty}^{\infty}\fr{e\{(\al+m)\mu+(\be+n)\nu\}}{|\al+m+(\be+n)z|^{2s}}
\qquad(\si>1)
\end{align*}   
both as $z\to0$ and $z\to\infty$ through $\fH^+$. It has fairly recently been shown by the authors  \cite{katsurada-noda2017} that complete asymptotic expansions exist for a two variable analogue of $F(s;z)$, when the associated parameters $\bz=(z_1,z_2)$ vary within the polysector $(\fH^{\pm})^2$, so as that the distance $|z_2-z_1|$ becomes both small and large.

The paper is organized as follows. Our main theorems (Theorems~1--4) are stated in the next section, while Section~3 is devoted to presenting the results on the classical Eisenstein series, 
Weierstra{\ss}' elliptic and allied functions. The proofs of Theorems~1, 2, 3 and 4 are given in 
Sections 4, 5, 6 and 7 respectively, while a major portion of the corollaries to Theorems are shown in Section~8.  
\section{Main results}

Prior to the statement of our main formulae, several necessary notations and results are prepared. 

Let $\vGa(s)$ be the gamma function, $(s)_n=\vGa(s+n)/\vGa(s)$ for $n\in\bbZ$ the shifted factorial of $s$, and write 
\begin{align*}
\vG{\al_1,\ldots,\al_m}{\be_1,\ldots,\be_n}
=\fr{\prod_{h=1}^m\vGa(\al_h)}{\prod_{k=1}^n\vGa(\be_k)}
\end{align*}
for $\al_h,\be_k\in\BC$ $(h=1,\ldots,m$; $k=1,\ldots,n)$. We here introduce for a variable $r\in\BC$ and parameters $\ga,\ka\in\BR$ the Lerch zeta-function $\ph(r,\ga,\ka)$, together with its companion 
$\ps(r,\ga,\ka)$, defined by 
\begin{align*}
\ph(r,\ga,\ka)
&=\sum_{-\ga<k\in\bbZ}\fr{e(k\ka)}{(\ga+k)^r}
\qquad(\re r>1),
\tag{2.1}\\
\ps(r,\ga,\ka)
&=\sum_{-\ga<k\in\bbZ}\fr{e\{(\ga+k)\ka\}}{(\ga+k)^r}
=e(\ga\ka)\ph(r,\ga,\ka),
\tag{2.2}
\end{align*}
which can be continued to entire functions if $\ka\notin\bbZ$, while for $\ka\in\bbZ$ the former (or for $\ka=0$ the latter) reduces to the Hurwitz zeta-function 
\begin{align*}
\ze(r,\ga)
&=\sum_{-\ga<k\in\bbZ}\fr{1}{(\ga+k)^r},
\qquad
(\re r>1),
\tag{2.3}
\end{align*} 
also for $\ga=0$ the former (or for $\ga\in\bbZ$ the latter) to the exponential zeta-function 
\begin{align*}
\ze_{\ka}(r)=\sum_{k=1}^{\infty}\fr{e(k\ka)}{k^r}
\qquad(\re r>1),
\tag{2.4}
\end{align*}
and hence further to $\ze(r)=\ze(r,\ga)=\ze_{\ka}(r)$ if $\ga,\ka\in\bbZ$. We remark here that the definition in (2.1) or (2.2) differs slightly from the original, which asserts, for $\ga,\ka\in\bR$ with $\ga\geq0$, 
\begin{align*}
\ph_0(r,\ga,\ka)
&=\sump_{k=0}^{\infty}\fr{e(k\ka)}{(\ga+k)^r}
\qquad(\re r>1),
\tag{2.5}\\
\ps_0(r,\ga,\ka)
&=\sump_{k=0}^{\infty}\fr{e\{(\ga+k)\ka\}}{(\ga+k)^r}
=e(\ga\ka)\ph_0(r,\ga,\ka). 
\tag{2.6}
\end{align*}  

Let $\lg x\rg=x-\lf x\rf $ denote hereafter the fractional part of $x\in\bR$, and use the convention
\begin{align*}
\lg x\rg'=1-\lg-x\rg=
\begin{cases}
\lg x\rg&\quad\text{if $x\notin\bbZ$},\\
1&\quad\text{if $x\in\bbZ$}.
\end{cases}
\tag{2.7}
\end{align*}
One can in fact show that the classical functional equation for (2.5)  (cf.~\cite{lerch1887}\cite{lipschitz1889}\cite[(6.15)]{katsurada2020}) is transferred to that for (2.1) or (2.2) in the following form, which allows to 
relax the original restrictions on the range of parameters such as $\ga,\ka\in[0,1]$.  
\begin{proposition}
For any $\ga,\ka\in\bR$ we have, in the whole $r$-plane $\bC$,
\begin{align*}
\ps(r,\ga,\ka)
&=\fr{\vGa(1-r)}{(2\pi)^{1-r}}
\bigl\{e^{\pi i(1-r)/2}\ph(1-r,\ka,-\ga)
+e^{-\pi i(1-r)/2}\ph(1-r,-\ka,\ga)\bigr\}
\tag{2.8}\\
&=e(\ga\ka)\fr{\vGa(1-r)}{(2\pi)^{1-r}}
\bigl\{e^{\pi i(1-r)/2}\ps(1-r,\ka,-\ga)
+e^{-\pi i(1-r)/2}\ps(1-r,-\ka,\ga)\bigr\}.
\end{align*}
\end{proposition}
\begin{proof}
From the relations, following from (2.1), (2.2) and (2.6) (with the convention of primed summation symbols), for any $\ga,\ka\in\BR$ and $l\in\bbZ$, 
\begin{align*}
&\ph(r,\ga,\ka)
=\ph(r,\ga,\ka+l)
\tag{2.9}\\
&\ps(r,\ga,\ka)
=\ps(r,\lg\ga\rg,\ka)=\ps_0(r,\lg\ga\rg,\ka)
=\ps_0(r,\lg\ga\rg',\ka),\tag{2.10}
\end{align*}
we see $\ps(r,\ga,\ka)=e(\lg\ga\rg\ka)\ph_0(r,\lg\ga\rg,\lg\ka\rg)$, and hence find from the classical functional equation for (2.5) with (2.6) that 
\begin{align*}
\ps(r,\ga,\ka)
&=e(\lg\ga\rg\ka)\fr{\vGa(1-r)}{(2\pi)^{1-r}}
\bigl\{e^{\pi i(1-r)/2}\ps_0(1-r,\lg\ka\rg,-\lg\ga\rg)
\tag{2.11}\\
&\quad+e^{-\pi i(1-r)/2}\ps_0(1-r,1-\lg\ka\rg,\lg\ga\rg\bigr\}.
\end{align*}
The right side of (2.11) becomes, from (2.10) and $1-\lg\ka\rg=\lg-\ka\rg'$ for $\ka\in\bR$, by (2.7),
\begin{align*}
\fr{\vGa(1-r)}{(2\pi)^{1-r}}\bigl\{e^{\pi i(1-r)/2}\ph(1-r,\ka,-\lg\ga\rg)
+e^{-\pi i(1-r)/2}\ph(1-r,-\ka,\lg\ga\rg)\bigr\},
\end{align*} 
which further equals, again by (2.6) and (2.9), the right sides of (2.8).  
\end{proof}

We next introduce, for a (base) complex parameter $q$ with $|q|<1$, and for real parameters $\ga$, 
$\de$, $\ka$ and $\la$, a $q$-series of the form
\begin{align*}
\cS_r(\ga,\de;\ka,\la;q)
&=\sum_{\bsm-\ga<k\in\bbZ\\ -\de<l\in\bbZ\esm}
\fr{e\{(\ga+k)\ka+(\de+l)\la\}}{(\de+l)^r}q^{(\ga+k)(\de+l)}\tag{2.12}\\
&=e(\lg\ga\rg'\ka)\sum_{-\de<l\in\bbZ}
\fr{e\{(\de+l)\la\}q^{\lg\ga\rg'(\de+l)}}{(\de+l)^r\{1-e(\ka)q^{\de+l}\}}.
\end{align*}

We proceed to state our first main result, which gives for (1.2) a transformation formula or an asymptotic expansion in the descending order of $\ta$ as $\ta\to\infty$. 
\bg{theorem}
Let $\al$, $\be$, $\mu$ and $\nu$ be real parameters, $q=e(z)=e^{-2\pi \ta}$ for any $z=i\ta\in\fH^+$ with $|\arg\ta|<\pi/2$, and set 
\begin{align*}
\cA(s,\al,\mu)
&=\cos(\pi s)\ps(s,-\al,-\mu)+\ps(s,\al,\mu)\tag{2.13}\\
&=e(\al\mu)\fr{(2\pi)^s}{2\vGa(s)}
\bigl\{e^{-\pi is/2}\ps(1-s,-\mu,\al)+e^{\pi is/2}\ps(1-s,\mu,-\al)\bigr\},
\end{align*}
which is holomorphic for all $s\in\BC$. Here the second equality is valid by {\rm(}2.8{\rm)}. 
Then we have  
\begin{align*}
\Fz2{s}{\al,\be}{\mu,\nu}{z}
&=\de(\be)\cA(s,\al,\mu)+e(\al\mu)\fr{(2\pi)^s}{\vGa(s)}
\bigl\{e^{-\pi is/2}\cS_{1-s}(\be,-\mu;\nu,\al;q)\}\tag{2.14}\\
&\quad+e^{\pi is/2}\cS_{1-s}(-\be,\mu;-\nu,-\al;q)\bigr\},
\end{align*}
where the $q$-series on the right side converge absolutely for all $s\in\bC${\rm;} this provides the holomorphic continuation of the left side to the whole $s$-plane $\bC$. 
\ed{theorem}
\begin{remarkone}
The case $(\mu,\nu)=(0,0)$ of (2.14) was first established by 
Lewittes~\cite[Theorem~1]{lewittes1972} in terms of contour integrations.
\end{remarkone}
\begin{remarktwo}
The $q$-series on the right side of (2.14) give the (convergent) asymptotic series in the descending order of $\ta$ as $\ta\to\infty$ through $|\arg\ta|<\pi/2$, since each term of the $q$-series is of order $\asymp \exp\{-2\pi\lg\pm\be\rg'(\mp\mu+m)\ta\}$ (for $m>\pm\mu$) as $\ta\to\infty$.
\end{remarktwo}

Next let $\wt{\bC^{\tms}}$ denote the universal covering of the punctured complex plane 
$\bC^{\times}=\bC\setminus\{0\}$, where the mapping 
$\wt{\bC^{\times}}\ni\wt{Y}\mapsto\log\wt{Y}=\log|\wt{Y}|+i\arg\wt{Y}\in\bC$ is bijective 
(with the range of $\arg\wt{Y}$ being extended over $\bR$). We define for any $X\in\bC$ and 
$\wt{Y}\in\wt{\bC^{\times}}$ the operation 
\begin{align*}
\wt{\bC^{\times}}\ni\wt{Y}\longmapsto\wt{Y}^X
=\exp\{X(\log|\wt{Y}|+i\arg\wt{Y})\}
=|\wt{Y}|^X\exp(iX\log\wt{Y})\in\bC.
\end{align*} 
Let $\wt{e}(\ka)$ for any $\ka\in\bR$ denote the point defined by $\log\wt{e}(\ka)=2\pi i\ka$, and write $\wt{e}(0)=\wt{1}$. Then $\wt{e}(\ka)^{\ga}=e(\ga\ka)$ holds by definition.

It is convenient for describing specific values of $\ps(r,\ga,\ka)$ to introduce the sequence of functions $\cC_k:\bC\times\wt{\bC^{\times}}\ni(X,\wt{Y})\mapsto\cC_k(X,\wt{Y})\in\bC$ $(k=0,1,\ldots)$, defined by the Taylor series expansion (with the variable $Z$ in $\bC$)  
\begin{align*}
\fr{Z\wt{Y}^Xe^{XZ}}{\wt{Y}^1e^Z-1}
&=\sum_{k=0}^{\infty}\fr{\cC_k(X,\wt{Y})}{k!}Z^k
\end{align*}
centered at $Z=0$ (notice that $\wt{Y}^1=|\wt{Y}|\exp(\log\wt{Y})$; this in particular asserts that 
\begin{align*}
\cC_0(X,\wt{Y})=
\begin{cases}
\wt{Y}^X&\quad\text{if $\wt{Y}^1=1$},\\
0           &\quad\text{otherwise},
\end{cases}
\tag{2.15}
\end{align*}
and that $\cC_k(X,\wt{Y})$ reduces when $\wt{Y}=\wt{1}$ (and so $\wt{Y}^X=1$) to the usual Bernoulli polynomial $B_k(X)$. We have shown in \cite[Lemma~3]{katsurada2015} by definition the following reciprocal relations. 
\begin{proposition}
For any integer $k\geq0$ and any $(X,\wt{Y})\in\bC\times\wt{\bC^{\times}}$, we have
\begin{align*}
&\cC_k(1-X,\wt{1}/\wt{Y})=(-1)^k\cC_k(X,\wt{Y}),
\tag{2.16}\\
&\cC_k(0,\wt{1}/\wt{Y})=(-1)^k\cC_k(0,\wt{Y})-\de_{k1}
=\begin{cases}
(-1)^k\cC_k(0,\wt{Y})&\quad\text{if $k\neq1$},\\
-\cC_1(0,\wt{Y})-1&\quad\text{if $k=1$},
\end{cases}
\tag{2.17}
\end{align*}
where $\wt{1}/\wt{Y}\in\wt{\BC^{\times}}$ is the point defined by $|\wt{1}/\wt{Y}|=1/|\wt{Y}|$ 
and $\arg(\wt{1}/\wt{Y})=-\arg\wt{Y}$, and $\de_{kl}$ denotes hereafter Kronecker's symbol. 
\end{proposition}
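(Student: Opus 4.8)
The plan is to work entirely at the level of the generating function (2.15), which I abbreviate as
\[
G(X,\wt{Y};Z)=\fr{Z\wt{Y}^Xe^{XZ}}{\wt{Y}^1e^Z-1},
\]
and to read off both reciprocal relations by comparing coefficients of $Z^k$. The only facts about the power operation (2.14) that I shall need are that $\wt{Y}^a=\exp(a\log\wt{Y})$ is single-valued on $\wt{\bC^{\times}}$, that it obeys $\wt{Y}^a\wt{Y}^b=\wt{Y}^{a+b}$ for a fixed base (this being merely additivity of $\exp$), and that $\log(\wt{1}/\wt{Y})=-\log\wt{Y}$ by the stated definition of $\wt{1}/\wt{Y}$, whence $(\wt{1}/\wt{Y})^a=\wt{Y}^{-a}$ and in particular $(\wt{1}/\wt{Y})^1=1/\wt{Y}^1$.

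For (2.17) I would establish the single functional identity
\[
G(1-X,\wt{1}/\wt{Y};Z)=G(X,\wt{Y};-Z).
\]
Starting from the left side, I substitute $(\wt{1}/\wt{Y})^{1-X}=\wt{Y}^{X-1}$ and $(\wt{1}/\wt{Y})^1=1/\wt{Y}^1$, clear the denominator by multiplying through by $\wt{Y}^1$ (using $\wt{Y}^{X-1}\wt{Y}^1=\wt{Y}^X$), and then factor $e^Z$ out of the denominator. After cancelling $e^Z$ the expression collapses to
\[
\fr{-Z\wt{Y}^Xe^{-XZ}}{\wt{Y}^1e^{-Z}-1}=G(X,\wt{Y};-Z),
\]
as claimed. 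Expanding both sides and matching the coefficient of $Z^k$ — the left side contributing $\cC_k(1-X,\wt{1}/\wt{Y})/k!$ and the right side $(-1)^k\cC_k(X,\wt{Y})/k!$ — yields (2.17).

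For (2.18) I would first record the elementary difference identity
\[
G(1,\wt{Y};Z)-G(0,\wt{Y};Z)=\fr{Z(\wt{Y}^1e^Z-1)}{\wt{Y}^1e^Z-1}=Z,
\]
which, on comparing coefficients, gives $\cC_k(1,\wt{Y})=\cC_k(0,\wt{Y})+\de_{k1}$ for every $k\geq0$ and every base. The desired relation then follows by a two-step combination: I apply this difference identity at the base $\wt{1}/\wt{Y}$ to write $\cC_k(0,\wt{1}/\wt{Y})=\cC_k(1,\wt{1}/\wt{Y})-\de_{k1}$, and then invoke the already-proved (2.17) in the special case $X=0$, namely $\cC_k(1,\wt{1}/\wt{Y})=(-1)^k\cC_k(0,\wt{Y})$. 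Substituting the latter into the former produces (2.18).

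The calculations are routine once these generating-function identities are in place, so the only genuine point of care — the main, if modest, obstacle — is the bookkeeping of the multivalued operation $\wt{Y}^X$ on the universal cover: one must check that clearing denominators and recombining exponents such as $\wt{Y}^{X-1}\wt{Y}^1=\wt{Y}^X$ and $(\wt{1}/\wt{Y})^a=\wt{Y}^{-a}$ respect the branch conventions of (2.14), rather than accruing an unwanted factor of $e^{2\pi i\,(\cdot)}$. Since all of these reduce to additivity of $\log$ under the stated definition of $\wt{1}/\wt{Y}$, no monodromy ambiguity actually arises, and the two displayed identities hold verbatim.
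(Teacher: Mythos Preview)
Your proof is correct. The paper itself does not prove Proposition~2 here; it merely cites \cite[Lemma~3]{katsurada2015} with the remark that the relations follow ``from (2.15).'' Your generating-function argument is precisely the natural way to unpack that phrase: the identity $G(1-X,\wt{1}/\wt{Y};Z)=G(X,\wt{Y};-Z)$ yields (2.17) directly, and your difference identity $G(1,\wt{Y};Z)-G(0,\wt{Y};Z)=Z$ combined with (2.17) at $X=0$ gives (2.18). The branch-consistency check you flag is indeed the only subtlety, and you handle it correctly via $\log(\wt{1}/\wt{Y})=-\log\wt{Y}$ and additivity of exponents at a fixed base. So you have supplied the details the paper omits, by what is almost certainly the same route taken in the cited earlier paper.
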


We proceed to state our second main result, which gives for (1.2) an asymptotic expansion in the ascending order of $\ta$ as $\ta\to0$.
\bg{theorem}
Let $\al$, $\be$, $\mu$ and $\nu$ be real parameters, $z=i\ta\in\mathfrak{H}^+$ with 
$|\arg\ta|<\pi/2$, and set  
\begin{align*}
\cB_1(s,\al,\mu)
&=i\sin(\pi s)\ps(s,-\al,-\mu)
\tag{2.18}\\
&=ie(\al\mu)\fr{(2\pi)^s}{2\vGa(s)}
\bigl\{e^{\pi i(1-s)/2}\ps(1-s,-\mu,\al)\\
&\quad+e^{-\pi i(1-s)/2}\ps(1-s,\mu,-\al)\bigr\},\\
\cB_2(s,\be,\nu)
&=e^{\pi is/2}\ps(s,-\be,-\nu)+e^{-\pi is/2}\ps(s,\be,\nu)
\tag{2.19}\\
&=e(\be\nu)\fr{(2\pi)^s}{\vGa(s)}\ps(1-s,\nu,-\be),
\end{align*}
where the second equalities in (2.18) and (2.19) are valid by (2.8). 
Then for any integer $J\geq0$, in the region $\si>-J$, we have
\begin{align*}
\Fz2{s}{\al,\be}{\mu,\nu}{z}
&=\de(\be)\cB_1(s,\al,\mu)
+\de(\al)\cB_2(s,\be,\nu)\ta^{-s}
\tag{2.20}\\
&\quad+S_J(s;\al,\be;\mu,\nu;z)+R_J(s;\al,\be;\mu,\nu;z)
\end{align*}
in the sector $|\arg\ta|<\pi/2$, where $S_J(s;\al,\be;\mu,\nu;z)$ is the asymptotic series 
of the form  
\begin{align*}
S_J(s;\al,\be;\mu,\nu;z)
&=2\sin(\pi s)\sum_{j=-1}^{J-1}\fr{i^{j+1}(s)_j}{(j+1)!}
\ps(s+j,-\al,-\mu)\cC_{j+1}(\lg\be\rg,\wt{e}(\nu))\ta^j,
\tag{2.21}
\end{align*}
and $R_J(s;\al,\be;\mu,\nu;z)$ is the remainder expressed by the Mellin-Barnes type integrals 
in (5.3) with (5.4) below, and satisfies the estimate 
\begin{align*}
R_J(s;\al,\be;\mu,\nu;z)=O(|\ta|^J)
\tag{2.22}
\end{align*}
as $\ta\to0$ through $|\arg \ta|\leq\pi/2-\et$ with any small $\et>0$. Here the implied $O$-constant depends at most on $s$, $\al$, $\be$, $\mu$, $\nu$, $J$ and $\et$. 
\end{theorem}

Next let $\t1F1{a}{c}{Z}$ and $U(a;c;Z)$ denote Kummer's confluent hypergeometric functions of the 
first and second kind, defined by 
\begin{align*}
\1F1{a}{c}{Z}
&=\sum_{k=0}^{\infty}\fr{(a)_k}{(c)_kk!}Z^k
\qquad(|Z|<+\infty)
\tag{2.23}
\end{align*}
for $(a,c)\in\BC\tms(\BC\setminus\{0,-1,\ldots\})$ (cf.~\cite[p.248, 6.1(1)]{erdelyi1953a}), and 
\begin{align*}
U(a;c;Z)
=\fr{1}{\vGa(a)\{e(a)-1\}}\int_{\infty}^{(0+)}
e^{-Zw}w^{a-1}(1+w)^{c-a-1}dw
\tag{2.24}
\end{align*} 
for $(a,c)\in\BC^2$ and $|\arg Z|<\pi/2$ (the notation in Slater~\cite[p.5, 1.3(1.3.1)]{slater1960} is rather preferable for our later use); this can be continued to the whole sector $|\arg z|<3\pi/2$ by rotating appropriately the path of integration (cf.~\cite[p.273, 6.11.2(9)]{erdelyi1953a}). 
Note that these functions are connected by the relation 
\begin{align*}
\1F1{a}{c}{Z}
&=\vG{c}{c-a}e^{\vep(Z)\pi i\al}U(a;c;Z)
+\vG{c}{a}e^{\vep(Z)\pi i(a-c)}e^Z
\tag{2.25}\\ 
&\quad\tms U(c-a;c;e^{-\vep(Z)\pi i}Z),
\end{align*} 
valid in the sectors $0<|\arg Z|<\pi$ (cf.~\cite[p.~259, 6.7(7)]{erdelyi1953a}\cite[Lemma~2]{katsurada-noda2017}). 

The connection formula (2.25) in fact leads us to extract from the remainder in (2.20) the 
exponentially small order terms $\cS_{1-s}(\pm\al,\pm\nu;\pm\mu,\mp\nu;\wh{q})$ with 
$\wh{q}=e^{-2\pi/\ta}$ (as $\ta\to0$):
\begin{theorem}
Let $\al$, $\be$, $\mu$, and $\nu$ be any real parameters, and write $q=e(z)=e^{-2\pi\ta}$ and 
$\wh{q}=e(-1/z)=e^{-2\pi/\ta}$ for any $z=i\ta\in\fH^+$ with $|\arg\ta|<\pi/2$. Then in the region 
$\si>1-J$ with any $J\geq1$, and in the sectors $0<|\arg\ta|<\pi/2$, we have 
\begin{align*}
&R_J(s,\al,\be;\mu,\nu;z)
\tag{2.26}\\
&\quad=e(\be\nu)\fr{(2\pi/\ta)^s}{\vGa(s)}
\{\cS_{1-s}(\al,\nu;\mu,-\be;\wh{q})
+e^{\vep(\ta)\pi is}\cS_{1-s}(-\al,-\nu;-\mu,\be;\wh{q})\}\\
&\qquad+e(\be\nu)\fr{(-1)^J(s)_J(2\pi/\ta)^s}{\vGa(s)\vGa(1-s)}
R^{\ast}_J(s;\al,\be;\mu,\nu;z).
\end{align*}
Here the expression
\begin{align*}
&R^{\ast}_J(s;\al,\be;\mu,\nu;z)
\tag{2.27}\\
&\quad=\sum_{\bsm \al<m\\ -\nu<n\esm}\fr{e\{(-\al+m)(-\mu)+(\nu+n)(-\be)\}}{(\nu+n)^{1-s}}
F_{s,J}\{2\pi(-\al+m)(\nu+n)/\ta\}\\
&\qquad-e^{\vep(\ta)\pi is}
\sum_{\bsm\al<m\\ \nu<n\esm}\fr{e\{(-\al+m)(-\mu)+(-\nu+n)\be\}}{(-\nu+n)^{1-s}}\\
&\qquad\times F_{s,J}\{2\pi e^{\vep(\ta)\pi i}(-\al+m)(-\nu+n)/\ta\}
\end{align*}
holds with
\begin{align*}
F_{s,J}(Z)=U(s+J;s+J;Z),
\tag{2.28}
\end{align*}
where the right side of (2.27) converges absolutely for $\si>1-J$ with $J\geq1$, and 
provides there the holomorphic continuation of $R^{\ast}_J(s;\al,\be;\mu,\nu;q)$. 
Furthermore for any integers $J\geq1$ and $K\geq0$, in the region $\si>1-J-K$, we 
have 
\begin{align*}
R^{\ast}_J(s;\al,\be;\mu,\nu;z)
&=S^{\ast}_{J,K}(s;\al,\be;\mu,\nu;z)+R^{\ast}_{J,K}(s;\al,\be;\mu,\nu;z)
\tag{2.29}
\end{align*}
where $S^{\ast}_{J,K}(s;\al,\be;\mu,\nu;z)$ is the asymptotic series of the form
\begin{align*}
&S^{\ast}_{J,K}(s;\al,\be;\mu,\nu;z)
\tag{2.30}\\
&\quad=\fr{e(-\be\nu)}{(2\pi e^{-\vep(\ta)\pi i/2})^{s-1}}
\sum_{k=0}^{K-1}\fr{i^{J+k+1}(s+J)_k}{(J+k+1)!}\ps(s+J+k,-\al,-\mu)\\
&\qquad\tms\cC_{J+k+1}(\lg\be\rg,\wt{e}(\nu))(e^{-\vep(\ta)\pi i/2}\ta)^{s+J+k}
\end{align*}  
in the sectors $0<|\arg\ta|<\pi/2$, and $R^{\ast}_{J,K}(s;\al,\be;\mu,\nu;z)$ is the 
remainder expressed by the Mellin-Barnes type integral in (6.24) below, satisfying the 
estimate 
\begin{align*}
R^{\ast}_{J,K}(s;\al,\be;\mu,\nu;z)=O(|\ta|^{\si+J+K})
\tag{2.31}
\end{align*}
as $\ta\to0$ through $\et\leq|\arg\ta|\leq\pi/2-\et$ with any small $\et>0$. Here the 
implied $O$-constant depends at most on $s$, $\al$, $\be$, $\mu$, $\nu$, $J$, $K$ and 
$\et$.
\end{theorem}
\begin{remark}
The formulae (2.26) and (2.29) with (2.30) in fact reveal that the instances of  `exponentially improved 
asymptotics' and `Stokes' phenomena' respectively, which can normally be observed in the 
theory of differential equations in the complex domain, also occur in the present case of  
generalized holomorphic Eisenstein series. 
\end{remark}

We next proceed to state several new variants of Ramanujan's formula for $\ze(2k+1)$. 
\bg{theorem}
Let $q=e(i\ta)=e^{-2\pi\ta}$ and $\wh{q}=e(i/\ta)=e^{-2\pi/\ta}$ for any $\ta\in\bC$ in the sector 
$|\arg \ta|<\pi/2$. Then for any real $\al$, $\be$, $\mu$ and $\nu$, and any integer $k$, supposing that $\al,\be\notin\bbZ$ if $k=1$, we have 
\begin{align*}
&e(\al\mu)\bigl\{\de(\be)\ps(k,-\mu,\al)+\cS_k(\be,-\mu;\nu,\al;q)
+(-1)^{k-1}\cS_k(-\be,\mu;-\nu,-\al;q)\bigr\}
\tag{2.32}\\
&\qquad-(-2\pi)^k\sum_{j=0}^{k+1}
\fr{(-i)^j\cC_j(\lg\al\rg,\wt{e}(\mu))\cC_{k+1-j}(\lg\be\rg,\wt{e}(\nu))}{j!(k+1-j)!}\ta^{k-j}\\
&\quad=e(\be\nu)(-i\ta)^{k-1}\bigl\{\de(\al)\ps(k,\nu,-\be)
+\cS_k(\al,\nu;\mu,-\be;\wh{q})\\
&\qquad+(-1)^{k-1}\cS_k(-\al,-\nu;-\mu,\be;\wh{q})\bigr\},
\end{align*}   
whose variant asserts, upon replacing $(\ta,q)\mapsto(1/\ta,\wh{q})$, that  
\begin{align*}
&e(\be\nu)\bigl\{\de(\al)\ps(k,\nu,-\be)+\cS_k(\al,\nu;\mu,-\be;q)
+(-1)^{k-1}\cS_k(-\al,-\nu;-\mu,\be;q)\bigr\}\\
&\qquad-(-2\pi)^k\sum_{j=0}^{k+1}
\fr{i^j\cC_j(\lg\be\rg,\wt{e}(\nu))\cC_{k+1-j}(\lg\al\rg,\wt{e}(\mu))}{j!(k+1-j)!}\ta^{k-j}\\
&\quad=e(\al\mu)(i\ta)^{k-1}\bigl\{\de(\be)\ps(k,-\mu,\al)+\cS_k(\be,-\mu;\nu,\al;\wh{q})\\
&\qquad+(-1)^{k-1}\cS_k(-\be,\mu;-\nu,-\al;\wh{q})\bigr\}.
\end{align*}
\ed{theorem}
The case $(\mu,\nu)=(0,0)$ of Theorem~4 reduces to the formula for the pairing of $\ze_{\al}(k)$ 
and $\ze_{-\be}(k)$: 
\bg{corollary}
For any real $\al$ and $\be$, and any integer $k$, supposing that $\al,\be\notin\bbZ$ if $k=1$, we have 
\begin{align*}
&\de(\be)\ze_{\al}(k)+\cS_k(\be,0;0,\al;q)
+(-1)^{k-1}\cS_k(-\be,0;0,-\al;q)\\
&\qquad-(-2\pi)^k\sum_{j=0}^{k+1}
\fr{(-i)^jB_j(\lg\al\rg)B_{k+1-j}(\lg\be\rg)
}{j!(k+1-j)!}\ta^{k-j}\\
&\quad=(-i\ta)^{k-1}\bigl\{\de(\al)\ze_{-\be}(k)+\cS_k(\al,0;0,-\be;\wh{q})
+(-1)^{k-1}\cS_k(-\al,0;0,\be;\wh{q})\bigr\}, 
\end{align*}
whose variant asserts that
\begin{align*}
&\de(\al)\ze_{-\be}(k)+\cS_k(\al,0;0,-\be;q)
+(-1)^{k-1}\cS_k(-\al,0;0,\be;q)\\
&\qquad-(-2\pi)^k\sum_{j=0}^{k+1}
\fr{i^jB_j(\lg\be\rg)B_{k+1-j}(\lg\al\rg)}{j!(k+1-j)!}\ta^{k-j}\\
&\quad=(i\ta)^{k-1}\bigl\{\de(\be)\ze_{\al}(k)+\cS_k(\be,0;0,\al;\wh{q})
+(-1)^{k-1}\cS_k(-\be,0;0,-\al;\wh{q})\bigr\}.
\end{align*}
\ed{corollary}
Note that $\cC_k(X,\wt{Y})$ reduces if $X=0$ to $A_k(Y)$ $(k=0,1,\ldots)$, where the rational 
functions $A_k(Y)$ are defined by the Taylor series expansion
\begin{align*}
\fr{Z}{Ye^Z-1}
=\sum_{k=0}^{\infty}\fr{A_k(Y)}{k!}Z^k,
\end{align*}  
centered at $Z=0$. Professor A.~Schinzel kindly let us know (in a private communication with the 
first author) that the coefficients of $A_k(Y)$ are described in terms of Eularien (not Euler's) 
numbers. The case $(\al,\be)=(0,0)$ of Theorem~4 reduces to the formula for the pairing of 
$\ze(k,-\mu)$ and $\ze(k,\nu)$:
\bg{corollary}
For any real $\mu$ and $\nu$, and any integer $k\neq1$ we have
\begin{align*}
&\ze(k,-\mu)+\cS_k(0,-\mu;\nu,0;q)
+(-1)^{k-1}\cS_k(0,\mu;-\nu,0;q)\bigr\}\\
&\qquad-(-2\pi)^k\sum_{j=0}^{k+1}
\fr{(-i)^jA_j(e(\mu))A_{k+1-j}(e(\nu))}{j!(k+1-j)!}\ta^{k-j}\\
&\quad=(-i\ta)^{k-1}\bigl\{\ze(k,\nu)+\cS_k(0,\nu;\mu,0;\wh{q})
+(-1)^{k-1}\cS_k(0,-\nu;-\mu,0;\wh{q})\bigr\},  
\end{align*}
whose variant asserts that
\begin{align*}
&\ze(k,\nu)+\cS_k(0,\nu;\mu,0;q)
+(-1)^{k-1}\cS_k(0,-\nu;-\mu,0;q)\bigr\}\\
&\qquad-(-2\pi)^k\sum_{j=0}^{k+1}
\fr{i^jA_j(e(\nu))A_{k+1-j}(e(\mu))}{j!(k+1-j)!}\ta^{k-j}\\
&\quad=(i\ta)^{k-1}\bigl\{\ze(k,-\mu)+\cS_k(0,-\mu;\nu,0;\wh{q})
+(-1)^{k-1}\cS_k(0,\mu;-\nu,0;\wh{q})\bigr\}.  
\end{align*}
\ed{corollary}
The case $(\be,\nu)=(0,0)$ reduces to the formula for the pairing of $\ps(k,-\mu,\al)$ and 
$\ze(k)$:
\bg{corollary}
For any real $\al$ and $\mu$, and any integer $k\neq1$ we have 
\begin{align*}
&e(\al\mu)\bigl\{\ps(k,-\mu,\al)
+\cS_k(0,-\mu;0,\al;q)
+(-1)^{k-1}\cS_k(0,\mu;0,-\al;q)\bigr\}\\
&\qquad-(-2\pi)^k\sum_{j=0}^{k+1}
\fr{(-i)^j\cC_j(\lg\al\rg,\wt{e}(\mu))B_{k+1-j}}{j!(k+1-j)!}
\ta^{k-j}\\
&\quad=(-i\ta)^{k-1}
\bigl\{\de(\al)\ze(k)+\cS_k(\al,0;\mu,0;\wh{q})
+(-1)^{k-1}\cS_k(-\al,0;\mu,0;\wh{q})\bigr\},
\end{align*} 
whose variant asserts that 
\begin{align*}
&\de(\al)\ze(k)
+\cS_k(\al,0;\mu;0;q)
+(-1)^{k-1}\cS_k(-\al,0,-\mu;0;q)\bigr\}\\
&\qquad-(-2\pi)^k\sum_{j=0}^{k+1}
\fr{i^jB_j\cC_{k+1-j}(\lg\al\rg,\wt{e}(\mu))}{j!(k+1-j)!}
\ta^{k-j}\\
&\quad=e(\al\mu)(i\ta)^{k-1}
\bigl\{\ps(k,-\mu,\al)
+\cS_k(0,-\mu,0;\al,0;\wh{q})\\
&\qquad+(-1)^{k-1}\cS_k(0,\mu,0;0,-\al;\wh{q})\bigr\}.
\end{align*} 
\ed{corollary}
The case $(\al,\nu)=(0,0)$ reduces to the formula for the paring of $\ze(k,-\mu)$ and 
$\ze_{-\be}(k)$: 
\bg{corollary}
For any real $\be$ and $\mu$, and any integer $k\neq1$ we have
\bg{align*}
&\de(\be)\ze(k,-\mu)+\cS_k(\be,-\mu;0,0;q)
+(-1)^{k-1}\cS_k(-\be,\mu;0,0;q)\\
&\qquad-(-2\pi)^k
\sum_{j=0}^{k+1}\fr{(-i)^jA_j(e(\mu))B_{k+1-j}(\lg\be\rg)}{j!(k+1-j)!}\ta^{k-j}\\
&\quad=e(\be\nu)(-i\ta)^{k-1}
\bigl\{\ze_{-\be}(k)+\cS_k(0,0;\mu,-\be;\wt{q})
+(-1)^{k-1}\cS_k(0,0;-\mu,\be;\wh{q}))\bigr\},
\end{align*}
whose variant asserts that 
\bg{align*}
&\ze_{-\be}(k)+\cS_k(0,0;\mu,-\be;q)+(-1)^{k-1}\cS_k(0,0;-\mu,\be;q)\\
&\qquad-(-2\pi)^k
\sum_{j=0}^{k+1}\fr{i^jB_j(\lg\be\rg)A_{k+1-j}(e(\mu))}{j!(k+1-j)!}
\ta^{k-j}\\
&\quad=(i\ta)^{k-1}\bigl\{\de(\be)\ze(k,-\mu)
+\cS_k(\be,-\mu;0,0;\wh{q})
+(-1)^{k-1}\cS_k(-\be,\mu;0,0;\wh{q})\bigr\}.
\end{align*}
\ed{corollary}
Lastly the case $(\al,\be,\mu,\nu)=(0,0,0,0)$ of Theorem~1 reduces to the celebrated formulae 
of Euler and Ramanujan for specific values of $\ze(s)$:
\bg{corollary}
We have the the following formulae{\rm:}
\bg{itemize}
\item[i)] for any integer $k\geq1$, 
\begin{align*}
\ze(2k)=\fr{(-1)^{k-1}(2\pi)^{2k}}{2(2k)!}B_{2k};
\tag{2.33}
\end{align*}
\item[ii)] for any integer $k\neq0$,   
\begin{align*}
&\ze(2k+1)+2\cS_{2k+1}(0,0;0,0;q)
+(2\pi)^{2k+1}
\sum_{j=0}^{k+1}\fr{(-1)^{j}B_{2j}B_{2k+2-2j}}{(2j)!(2k+2-2j)!}\ta^{2k+1-2j}
\tag{2.34}\\
&\quad=(-1)^k\ta^{2k}\{\ze(2k+1)+2\cS_{2k+1}(0,0;0,0;\wh{q})\}.
\end{align*}
\ed{itemize}
\ed{corollary}
\begin{remark}
The formula (2.33) is due to Euler, while (2.34) to Ramanujan. The reader is to be referred to 
\cite[Chap.~14]{berndt1989} for the history and related results on these formulae.    
\end{remark}
\section{Classical Eisenstein series, Weierstra{\ss}' elliptic and allied functions}
We first show in this section several applications of Theorems~1--3 to the classical Eisenstein series of any even integer weight. 

Let 
\begin{align*}
a_k=
\begin{cases}
\ze(1-k)&\quad\text{if $k\neq0$},\\
2\ze'(0)=-\log2\pi&\quad\text{if $k=0$}
\end{cases}
\tag{3.1}
\end{align*}
(cf.~\cite[p.~34, 1.12(18)]{erdelyi1953a}) for any integer $k$, and define for $z\in\fH^+$ 
the classical Eisenstein series $E_k(z)$ (of weight $k\in2\bbZ$) by 
\begin{align*}
E_k(z)=1+\fr{2}{a_k}\sum_{l=1}^{\infty}\fr{l^{k-1}q^l}{1-q^l}
\tag{3.2}
\end{align*} 
with $q=e(z)$ (cf.~\cite[p.105, 4.5(4.5.1)]{berndt2006}). Theorem~1 readily implies upon the second equality in (2.13) and (7.12) below that 
\begin{align*}
E_k(z)=
\begin{cases}
{\dps\fr{(k-1)!}{(2\pi i)^{k}a_{k}}\Fz2{k}{0,0}{0,0}{z}}&\quad\text{if $k\geq2$},\\
{\dps\fr{(2\pi i)^{-k}}{(-k)!a_{k}}F'_{\bbZ^2}(k;0,0;0,0;z)}&\quad\text{if $k\leq0$}
\end{cases}
\tag{3.3}
\end{align*}
for $k\in2\bbZ$, where $F'_{\bbZ^2}(s;0,0;0,0;z)=(\del F_{\bbZ^2}/\del s)(s;0,0;0,0;z)$. Then the combination of Theorems~1,~2 with Theorem~3 in fact yields the following (quasi) modular relations. 
\begin{corollary}
For any $k\in2\bbZ$, we have 
\begin{align*}
E_{k}\Bigl(-\fr{1}{z}\Bigr)
&=z^{k}E_{k}(z)+\fr{1}{a_{k}}\biggl\{\de_{k0}\Bigl(\fr{\pi i}{2}-\log z\Bigr)
\tag{3.4}\\
&\quad+(-2\pi i)^{1-k}\sum_{j=0}^{1-k/2}\fr{B_{2j}B_{2-k-2j}}{(2j)!(2-k-2j)!}z^{1-2j}\biggr\},
\end{align*}
which in particular reduces to
\begin{align*}
E_0\Bigl(-\fr{1}{z}\Bigr)
&=E_0(z)+\fr{1}{\log2\pi}
\Bigl(\log z+\fr{\pi i}{2}+\fr{\pi iz}{6}+\fr{\pi i}{6z}\Bigr),
\tag{3.5}
\end{align*}
\begin{align*}
E_2\Bigl(-\fr{1}{z}\Bigr)
&=z^2E_2(z)+\fr{6z}{\pi i}.
\tag{3.6}
\end{align*}
\end{corollary}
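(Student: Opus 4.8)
The plan is to read the relations (3.4)--(3.6) off the integer-weight specialization of the transformation formula for $\Fz2{2k}{0,0}{0,0}{z}$ provided by Theorems~1--3, after transporting everything through the dictionary (3.3). Since $z=i\ta\in\fH^+$ forces $-1/z=i/\ta$, the inversion $z\mapsto-1/z$ is precisely $\ta\mapsto1/\ta$, whence the two nomes $q=e^{-2\pi\ta}$ and $\wh q=e^{-2\pi/\ta}$ interchange; by (3.3) the desired identity therefore amounts to a single relation connecting $\Fz2{2k}{0,0}{0,0}{z}$, a convergent $q$-series, with $\Fz2{2k}{0,0}{0,0}{-1/z}$, the same object in $\wh q$. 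For $k\geq1$ I would use the value of $F_{\bbZ^2}$ as in the first line of (3.3); for $k\leq0$, where $s=2k$ is a nonpositive even integer at which $\Fz2{2k}{0,0}{0,0}{z}$ degenerates (the factor $1/\vGa(s)$ annihilates the $q$-series in (2.13), while $\ze(2k)$ either vanishes or equals $\ze(0)=-1/2$), I would differentiate in $s$ and work with $F'_{\bbZ^2}(2k;0,0;0,0;z)$ as in the second line.

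For the transformation formula itself I would specialize to $(\al,\be,\mu,\nu)=(0,0,0,0)$ the integer-$s$ synthesis of Theorems~1--3, which is recorded as Theorem~4's (2.31). Theorem~1 writes $\Fz2{s}{0,0}{0,0}{z}$ as $\cA(s,0,0)$ plus a multiple of $\cS_{1-s}(0,0;0,0;q)$, and at the integer point $n=1-2k$ the left combination of (2.31) becomes $\ps(1-2k,0,0)+2\cS_{1-2k}(0,0;0,0;q)=a_{2k}+2\cS_{1-2k}(0,0;0,0;q)=a_{2k}E_{2k}(z)$ by (3.1)--(3.2), its right combination becomes $(-i\ta)^{-2k}a_{2k}E_{2k}(-1/z)=z^{-2k}a_{2k}E_{2k}(-1/z)$ on using $(-i\ta)^{-2k}=z^{-2k}$, and the finite middle sum is the Bernoulli double sum of (3.4). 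Here three features of the even integer $s=2k$ are used: the coefficients $\cC_j(\lg0\rg,\wt e(0))$ collapse to Bernoulli numbers $B_j$, the Lerch values $\ps(\cdot,0,0)$ reduce to $\ze(\cdot)$, and the Stokes multiplier $e^{\vep(\ta)\pi is}=e^{2\pi ik\vep(\ta)}$ equals $1$, so the two exponentially small contributions isolated by Theorem~3 (formula (2.25)) recombine symmetrically into $E_{2k}(-1/z)$ with no Stokes ambiguity. The range $0\leq j\leq2-2k$ of the polynomial part is nonempty exactly when $k\leq1$: for $k\geq2$ it is empty, reflecting absolute convergence, and one obtains the genuine modularity $E_{2k}(-1/z)=z^{2k}E_{2k}(z)$, while for $k=1$ only the single term $B_0B_0$ survives and, carried through $z^{2k}/a_{2k}$ with $a_2=\ze(-1)=-1/12$, produces the quasi-modular anomaly $6z/(\pi i)$ of (3.6).

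I expect the degenerate weights $k\leq0$ to be the main obstacle, since there the corrections survive and the exact constants must be pinned down. The case $k=0$ is genuinely special: (2.31) is inapplicable because its integer would be $n=1-2k=1$ while $\al=\be=0\in\bbZ$, so I would instead differentiate the synthesis of Theorems~1--3 in $s$ at $s=0$ and use the second line of (3.3). The transcendental term of (3.5) then arises from $\pa_s$ acting on the power prefactor, so that $\log z$ enters multiplied by the boundary value $\Fz2{0}{0,0}{0,0}{z}=2\ze(0)=-1$, and from the value $2\ze'(0)=-\log2\pi=a_0$, while the surviving Bernoulli terms supply the $\pi i\ta/6$ and $\pi i/(6\ta)$ pieces. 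For $k\leq-1$ one differentiates in the same way (the value $\Fz2{2k}{0,0}{0,0}{z}$ vanishing at the trivial zeros forces the derivative), with the double sum now running over $0\leq j\leq1-k$ and the odd-index terms carrying $B_1=-1/2$ requiring care. Throughout, the substance is the careful matching of signs and constants --- the $-1/4$, the placement of $\de_{k0}$, and the normalization of the $\log z$ term between the equivalent forms (3.4) and (3.5) --- rather than the absolutely convergent regime $k\geq2$, which is immediate.
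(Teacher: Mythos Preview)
Your plan is correct and follows essentially the same route as the paper: both derive (3.4) by specializing the integer-point synthesis of Theorems~1--3 at $s=2k$ with $(\al,\be,\mu,\nu)=(0,0,0,0)$ and reading off the transformation through the $q$-series definition (3.2) of $E_{2k}$. The only organizational difference is that the paper, instead of quoting the finished identity (2.31), applies the intermediate formulae (7.6) and (7.18) directly at $-1/z$ (i.e.\ with $\ta$ replaced by $1/\ta$) and then matches against (3.2)--(3.3); this is the same content, just viewed from the other end of the equality in Theorem~4. One small slip: for $k\le-1$ no differentiation is needed if you go through (2.31), since with $m=1-2k\ge3$ that identity already pairs $a_{2k}E_{2k}(z)$ with $a_{2k}E_{2k}(-1/z)$ via (3.2), and the odd-index Bernoulli terms you flag as ``requiring care'' in fact all vanish (the factor $B_{1-2k}$ is zero for $k\le-1$); differentiation is genuinely required only at $k=0$, exactly as you identify.
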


We next proceed to present several applications of Theorems~1--3 to Weierstra{\ss}' elliptic and allied functions.

Let $\bom=(\om_1,\om_2)$ be a fundamental parallelogram with $\im(\om_2/\om_1)>0$. Set $\om_2/\om_1=z$, and choose its branch as $0<\arg z<\pi$. Weierstra{\ss}' elliptic function with periods $\bom=(\om_1,\om_2)$ is defined by 
\begin{align*}
\wp(w\mid\bom)
&=\fr{1}{w^2}
+\sum_{\bsm m,n=-\infty\\ (m,n)\neq(0,0)\esm}^{\infty}
\biggl\{\fr{1}{(w-m\om_1-n\om_2)^2}-\fr{1}{(m\om_1+n\om_2)^2}\biggr\}
\tag{3.7}
\end{align*}
(cf.~\cite[p.328, 13.12(4)]{erdelyi1953b}), while (allied) Weierstra{\ss}' zeta and sigma functions by 
\begin{align*}
\ze(w\mid\bom)
&=\fr{1}{w}+\sum_{\bsm m,n=-\infty\\ (m,n)\neq(0,0)\esm}^{\infty}
\biggl\{\fr{1}{w-m\om_1-n\om_2}+\fr{1}{m\om_1+n\om_2}
+\fr{w}{(m\om_1+n\om_2)^2}\biggr\},\tag{3.8}\\
\si(w\mid\bom)
&=w\prod_{\bsm m,n=-\infty\\ (m,n)\neq(0,0)\esm}^{\infty}
\biggl(1-\fr{w}{m\om_1+n\om_2}\biggr)
\exp\biggl\{\fr{w}{m\om_1+n\om_2}
+\fr12\Bigl(\fr{w}{m\om_1+n\om_2}\Bigr)^2\biggr\}.
\tag{3.9}
\end{align*}
respectively (cf.~\cite[p.329, 13.12 (6); (12)]{erdelyi1953b}). It suffices to treat these functions 
with the normalized periods $\bz=(1,z)$, in view of the relations 
\begin{align*}
\wp(cw\mid c\bom)
=c^{-2}\wp(w\mid\bom),
\quad
\ze(cw\mid c\bom)
=c^{-1}\ze(w\mid\bom),
\quad
\si(cw\mid c\bom)
=c\si(w\mid\bom)
\end{align*}
for $c\in\bC^{\tms}$. One can then see that the limit relation 
\begin{align*}
\wp(w\mid\bz)=\lim_{\bsm s\to2\\ \re s>2\esm}
\bigl\{\Fz2{s}{\al,\be}{0,0}{z}-\Fz2{s}{0,0}{0,0}{z}\bigr\}
\tag{3.10}
\end{align*}
is valid for any $w=\al+\be z\in\BC$ with $(\al,\be)\in\BR^2\setminus\bbZ^2$, since the limit point $s=2$ is located on the boundary of the region where the series in (1.1) converges absolutely.  Theorem~1 can therefore be applied on the right side of (3.10) to show the following expression. 
\begin{corollary}
For any $w=\al+\be z\in\mathbb{C}$ with $(\al,\be)\in\mathbb{R}^2\setminus\mathbb{Z}^2$, 
\begin{align*}
\wp(w\mid\bz)
&=-\fr{\pi^2}{3}E_2(z)+\fr{\de(\be)\pi^2}{\sin^2\pi\al}
-4\pi^2\bigl\{\cS_{-1}(\be,0;0,\al;q)+
\cS_{-1}(-\be,0;0,-\al;q)\bigr\}.
\tag{3.11}
\end{align*} 
\end{corollary}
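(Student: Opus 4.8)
The plan is to read off both sides of the limit relation (3.10) from Theorem~1, since (2.13) furnishes the holomorphic continuation of each of $\Fz2{s}{\al,\be}{0,0}{z}$ and $\Fz2{s}{0,0}{0,0}{z}$ past the line $\si=2$; in particular both continuations are holomorphic at $s=2$, so the limit in (3.10) equals the difference of their values there. I would first dispose of the subtracted term by means of (3.3): taking $k=1$, with $a_2=\ze(-1)=-1/12$ and $(2\pi i)^2=-4\pi^2$, one gets $\Fz2{2}{0,0}{0,0}{z}=\fr{\pi^2}{3}E_2(z)$, which accounts for the leading term $-\fr{\pi^2}{3}E_2(z)$ of (3.11).

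Next I would apply (2.13) to $\Fz2{s}{\al,\be}{0,0}{z}$, specialized to $(\mu,\nu)=(0,0)$, and then set $s=2$. The phase factors collapse there, because $e^{-\pi is/2}=e^{\pi is/2}=-1$ and $(2\pi)^s/\vGa(s)=4\pi^2$ at $s=2$, so the two $q$-series of (2.13) merge into
\begin{align*}
-4\pi^2\bigl\{\cS_{-1}(\be,0;0,\al;q)+\cS_{-1}(-\be,0;0,-\al;q)\bigr\},
\end{align*}
which is exactly the $q$-series part of (3.11). It then remains only to evaluate the boundary term $\de(\be)\cA(2,\al,0)$ arising from the first summand of (2.13).

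For this I would compute $\cA(2,\al,0)$ directly from (2.12): since $\cos(2\pi)=1$ we have $\cA(2,\al,0)=\ps(2,-\al,0)+\ps(2,\al,0)$, and by (2.2)--(2.3) these are the Hurwitz zeta-values $\ze(2,-\al)$ and $\ze(2,\al)$. Re-indexing the first sum by $k\mapsto-k$ merges the two into a sum over all integers, yielding the classical partial-fraction value
\begin{align*}
\cA(2,\al,0)=\sum_{n=-\infty}^{\infty}\fr{1}{(\al+n)^2}=\fr{\pi^2}{\sin^2\pi\al}
\qquad(\al\notin\bbZ),
\end{align*}
so that $\de(\be)\cA(2,\al,0)$ contributes precisely $\de(\be)\pi^2/\sin^2\pi\al$. (Here $\de(\be)\neq0$ forces $\be\in\bbZ$, whence $\al\notin\bbZ$ because $(\al,\be)\in\bR^2\setminus\bbZ^2$, so the value is finite.) Collecting the three contributions reproduces (3.11).

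The only genuinely delicate point is the interchange of the $s\to2$ limit in (3.10) with the meromorphic continuation, that is, the assertion that the difference of the two Eisenstein series --- each divergent at $s=2$ --- converges to the difference of the continued values. This is already guaranteed by the holomorphy at $s=2$ supplied by Theorem~1; everything else is the routine bookkeeping of the phase factors together with the elementary evaluation of $\cA(2,\al,0)$.
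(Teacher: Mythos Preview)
Your proposal is correct and follows essentially the same route as the paper: evaluate $\cA(2,\al,0)=\ze(2,-\al)+\ze(2,\al)=\pi^2/\sin^2\pi\al$ via the partial-fraction expansion, specialize (2.13) at $s=2$ with $(\mu,\nu)=(0,0)$ to obtain $\Fz2{2}{\al,\be}{0,0}{z}$, and combine with (3.10) and the identification of $\Fz2{2}{0,0}{0,0}{z}$ with $\tfrac{\pi^2}{3}E_2(z)$. The only cosmetic difference is that you invoke (3.3) where the paper cites (3.2), and you are slightly more explicit about why the limit in (3.10) may be replaced by the value at $s=2$.
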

Let $\wh{\bz}=(1,-1/z)$ be the dual periods of $\bz=(1,z)$. Combining Corollary~4.7 with Theorems~2 and 3, we obtain the classical base change formula
\begin{align*}
\wp\Bigl(\fr{w}{z}\ \Big|\ \wh{\bz}\Bigr)
=z^2\wp(w\mid\bz),
\tag{3.12}
\end{align*}
which clarifies that the functional equation (2.8) and the connection formula (2.25) effect validating  (3.12). We next write the (base) parameter, corresponding to the half period, as $p=e(z/2)=e^{-\pi\ta}$, i.e.~$q=p^2$, and then define the Weierstarssian invariants $e_j(\bz)$ $(j=1,2,3)$ by 
\begin{align*}
e_1(\bz)=\wp\Bigl(\fr12\ \Big|\ \bz\Bigr),
\qquad
e_2(\bz)=\wp\Bigl(\fr{z}{2}\ \Big|\ \bz\Bigr),
\qquad
e_3(\bz)=\wp\Bigl(\fr{1+z}{2}\ \Big|\ \bz\Bigr).
\end{align*} 
Corollary~4.7 then implies the Lambert series expressions of Weierstrassian invariants 
(cf.~\cite[p.72, 4.2(4.46)--(4.48)]{venkatachaliengar2012}):
\begin{align*}
\begin{split}
e_1(\bz)
&=4\pi^2\biggl\{\fr1{6}+4\sum_{l=1}^{\infty}\fr{(2l-1)p^{4l-2}}{1-p^{4l-2}}\biggr\},\\
e_2(\bz)
&=4\pi^2\biggl\{-\fr{1}{12}-2\sum_{l=1}^{\infty}\fr{(2l-1)p^{2l-1}}{1-p^{2l-1}}\biggr\},\\
e_3(\bz)
&=4\pi^2\biggl\{-\fr{1}{12}+2\sum_{l=1}^{\infty}\fr{(2l-1)p^{2l-1}}{1+p^{2l-1}}\biggr\},
\end{split}
\tag{3.13}
\end{align*} 
which further yield a significant relation (cf.~\cite[p.72, 4.2(4.49)]{venkatachaliengar2012})
\begin{align*}
e_1(\bz)+e_2(\bz)+e_3(\bz)=0.
\end{align*}
Furthermore we obtain from (3.12) the period change formulae for Weierstrassian invariants:
\begin{align*}
e_1(\wh{\bz})=z^2e_2(\bz),
\qquad
e_2(\wh{\bz})=z^2e_1(\bz),
\qquad
e_3(\wh{\bz})=z^2e_3(\bz).
\end{align*}

We next consider Weiersta{\ss}' zeta function. An alternative definition of $\ze(w\mid\bz)$ 
asserts 
\begin{align*}
\ze(w\mid\bz)
=\fr{1}{w}-\int_0^w\Bigl\{\wp(u\mid\bz)-\fr{1}{u^2}\Bigr\}du
\tag{3.14}
\end{align*}
(cf.~\cite[p.329, 13.12(6)]{erdelyi1953b}), which is used to integrate the expression in 
(3.11), yielding the following formula. 
\begin{corollary}
For any $w=\al+\be z\in\mathbb{C}$ with $(\al,\be)\in]-1,1[^2\setminus\{(0,0)\}$,
\begin{align*}
\ze(w\mid\bz)
&=\fr{\pi^2}{3}E_2(z)w+\de(\be)\pi\cot\pi\al-(\sgn\be)\pi i
\tag{3.15}\\
&\quad-2\pi i\bigl\{\cS_0(\be,0;0,\al;q)-\cS_0(-\be,0;0,-\al;q)\bigr\}.
\end{align*}
\end{corollary}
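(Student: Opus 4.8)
The plan is to integrate the closed form (3.11) for $\wp(u\mid\bz)$ by means of the alternative definition (3.14). Since $z\in\fH^+$, the pair $\{1,z\}$ is $\bR$-linearly independent, so along the rectilinear path $u=tw$ $(0\leq t\leq 1)$ from $0$ to $w=\al+\be z$ the running parameters are $(\al',\be')=(t\al,t\be)$, which stay in $]-1,1[^2$, with $du=w\,dt$. I would insert (3.11) (with $(\al',\be')$ in place of $(\al,\be)$) into (3.14) and split the integrand $\wp(u\mid\bz)-1/u^2$ into three pieces: the constant $-\tfr{\pi^2}{3}E_2(z)$; the singular block formed by $\de(\be')\pi^2/\sin^2\pi\al'$ and $-4\pi^2\cS_{-1}(\be',0;0,\al';q)$ together with the compensating $-1/u^2$; and the manifestly regular term $-4\pi^2\cS_{-1}(-\be',0;0,-\al';q)$.

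The constant piece integrates at once: $-\int_0^w(-\tfr{\pi^2}{3}E_2(z))\,du=\tfr{\pi^2}{3}E_2(z)\,w$, the leading term of (3.15). For the $q$-series I would use the second form in (2.11), which along the path gives $\cS_{-1}(\be',0;0,\al';q)=\sum_{l\geq1}l\,e^{2\pi i l u}/(1-q^l)$ and $\cS_{-1}(-\be',0;0,-\al';q)=\sum_{l\geq1}l\,q^{l}e^{-2\pi i l u}/(1-q^l)$, each a genuine function of $u$. Integrating termwise (legitimate by the absolute and locally uniform convergence, with $\int e^{\pm 2\pi i l u}\,du=e^{\pm 2\pi i l u}/(\pm 2\pi i l)$) absorbs the extra factor $l$ and produces the antiderivatives $\pm\tfr{1}{2\pi i}\cS_0(\pm\be',0;0,\pm\al';q)$; carrying the outer sign of (3.14), their values at $u=w$ assemble into $-2\pi i\{\cS_0(\be,0;0,\al;q)-\cS_0(-\be,0;0,-\al;q)\}$, as required. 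When $\be=0$ the pole instead resides in the $\de$-term, and $\int\pi^2/\sin^2\pi\al'\,d\al'=-\pi\cot\pi\al'$ yields the contribution $\de(\be)\pi\cot\pi\al$.

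The main obstacle is the treatment of the lower endpoint $u\to0$, where regularity of $\wp(u\mid\bz)-1/u^2$ must be verified and the resulting finite constant computed. For $\be\neq0$ one has $\de(\be')\equiv0$ along the path (as $t\be\notin\bbZ$), and from $\sum_{l\geq1}l\,e^{2\pi i l u}=e^{2\pi i u}/(1-e^{2\pi i u})^2\sim -1/(4\pi^2u^2)$ it follows that $-4\pi^2\cS_{-1}(\be',0;0,\al';q)$ supplies exactly the pole cancelled by $-1/u^2$; the finite part is then read off from the Laurent expansion
\begin{align*}
2\pi i\,\fr{e^{2\pi i u}}{1-e^{2\pi i u}}+\fr{1}{u}=-\pi i+O(u)\qquad(u\to0),
\end{align*}
which furnishes the term $-(\sgn\be)\pi i$, the sign of $\be$ entering through $\lg\be'\rg'$ and hence through which of the two $\cS_{-1}$'s carries the singularity. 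The remaining finite constants cancel identically: the $1/w$ of (3.14) against the upper value $1/u|_{u=w}$ of the antiderivative $1/u$ of $-1/u^2$, and the $q$-series values $\sum_{l\geq1}q^l/(1-q^l)$ coming from the two $\cS_0$ antiderivatives at $u\to0$. Running the two cases $\be=0$ and $\be\neq0$ separately, with the sign of $\be$ governing $\sgn\be$, then yields exactly the right side of (3.15).
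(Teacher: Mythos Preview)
Your proposal is correct and follows essentially the same route as the paper: both arguments integrate the expression (3.11) for $\wp(u\mid\bz)$ along the straight segment $u=tw$ via (3.14), treat the cases $\be=0$ and $\be\neq0$ separately, and recover the constant $-(\sgn\be)\pi i$ from the $u\to0$ behaviour of the singular $q$-series term against $-1/u^2$. The only cosmetic difference is that the paper regularises by integrating from $\vep w$ to $w$ and letting $\vep\to0^+$ (using $q^{(\sgn\be)\vep w}/\{1-q^{(\sgn\be)\vep w}\}=(2\pi i\vep w)^{-1}+(\sgn\be)/2+O(\vep)$), whereas you extract the same finite part directly from the Laurent expansion $2\pi i\,e^{2\pi i u}/(1-e^{2\pi i u})+1/u=-\pi i+O(u)$; one small imprecision worth tightening is that your displayed formula $\cS_{-1}(\be',0;0,\al';q)=\sum_{l\geq1}l\,e^{2\pi i l u}/(1-q^l)$ holds as written only for $\be>0$ (for $\be<0$ the factor $q^{\lg\be'\rg' l}=q^{(1+\be')l}$ inserts an extra $q^l$, and the singular $u\to0$ term migrates to the other $\cS_{-1}$), though you correctly flag this sign dependence later.
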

Combining Corollary~4.8 with Theorems~2~and~3, we obtain the base change formula
\begin{align*}
\ze\Bigl(\fr{w}{z}\ \Big|\ \wh{\bz}\Bigr)
=z\ze(w\mid\bz),
\tag{3.16}
\end{align*}
which again clarifies (2.8) and (2.25) effect validating (3.16). Weierstra{\ss}' eta invariants are further defined by 
\begin{align*}
\eta_1(\bz)=\ze\Bigl(\fr{1}{2}\ \Big|\ \bz\Bigr),
\quad
\eta_2(\bz)=\ze\Bigl(\fr{z}{2}\ \Big|\ \bz\Bigr),
\quad 
\eta_3(\bz)=\ze\Bigl(-\fr{1+z}{2}\ \Big|\ \bz\Bigr).
\end{align*}
Corollary~4.8 therefore gives the evaluations  
\begin{align*}
\eta_1(\bz)=\fr{\pi^2}{6}E_2(z),
\quad
\eta_2(\bz)=\fr{\pi^2}{6}E_2(z)z-\pi i,
\quad
\eta_3(\bz)=-\fr{\pi^2}{6}E_2(z)(1+z)+\pi i,
\end{align*}
which readily imply the classical Legendre relations (cf.~\cite[p. 329, 13.12(10)]{erdelyi1953b}): 
\begin{align*}
\et_1(\bz)\cdot\fr{z}{2}-\et_2(\bz)\cdot\fr{1}{2}
&=\pi i/2,\\
\et_2(\bz)\cdot\Bigl(-\fr{1+z}{2}\Bigr)-\et_3(\bz)\cdot\fr{z}{2}
&=\pi i/2,\\
\et_3(\bz)\cdot\fr{1}{2}-\et_1(\bz)\cdot\Bigl(-\fr{1+z}{2}\Bigr)
&=\pi i/2.
\end{align*}

We finally consider Weierstra{\ss}' sigma function. An alternative definition of $\si(w\mid\bz)$ asserts  
\begin{align*}
\log\si(w\mid\bz)=\log w+\int_0^w\Bigl\{\ze(u\mid\bz)-\fr{1}{u}\Bigr\}du
\tag{3.17}
\end{align*}
(cf.~\cite[p.329, 13.12(12)]{erdelyi1953b}). We use the customary notation $(Z;q)_{\infty}
=\prod_{l=0}^{\infty}(1-Zq^l)$ for any $Z\in\bC$. Then the expression in (3.15) can be 
integrated to show the following formula.
\begin{corollary}
For any $w=\al+\be z\in\mathbb{C}$ with $(\al,\be)\in]-1,1[^2\setminus\{(0,0)\}$, we have
\begin{align*}
\log\si(w\mid\bz)
&=\fr{\pi^2}{6}E_2(z)w^2+(\sgn\be)\pi i\Bigl(\fr{1}{2}-w\Bigr)+\de(\be)\log(2\sin\pi\al)
\tag{3.18}\\
&\quad-\cS_1(\be,0;0,\al;q)-\cS_1(-\be,0;0,-\al;q)
+2\cS_1(0,0;0,0;q)-\log2\pi,
\end{align*}
whose exponential form asserts
\begin{align*}
\si(w\mid\bz)
&=\exp\biggl\{\fr{\pi^2}{6}E_2(z)w^2+(\sgn\be)\pi i\Bigl(\fr{1}{2}-w\Bigr)\biggr\}(2\sin\pi\al)^{\de(\be)}
\tag{3.19}\\
&\quad\times\fr{(e(\al)q^{\lg\be\rg'};q)_{\infty}(e(-\al)q^{\lg-\be\rg'};q)_{\infty}}{2\pi(q;q)_{\infty}^2}.
\end{align*}
\end{corollary}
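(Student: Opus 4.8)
The plan is to integrate the expression (3.15) for $\ze(w\mid\bz)$ furnished by Corollary~4.8 against the alternative definition (3.16) of $\log\si(w\mid\bz)$. Parametrising the segment of integration by $u=tw$ $(0\le t\le1)$, the unique decomposition $u=\al_u+\be_u z$ becomes $(\al_u,\be_u)=t(\al,\be)$, so that $\de(\be_u)=0$ and $\sgn\be_u=\sgn\be$ for $t\in\,]0,1]$ whenever $\be\ne0$. First I would treat the generic case $\be\in\,]0,1[$. Here $\lg\be_u\rg'=\be_u=t\be$, whence the second form of (2.11) gives the clean identities $\cS_0(\be_u,0;0,\al_u;q)=\sum_{l\ge1}e(lu)/(1-q^l)$ and $\cS_0(-\be_u,0;0,-\al_u;q)=\sum_{l\ge1}q^le(-lu)/(1-q^l)$, together with the corresponding formulae for $\cS_1$ obtained upon inserting an extra factor $1/l$. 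These yield the crucial antiderivative relation $(d/du)\cS_1(\pm\be_u,0;0,\pm\al_u;q)=\pm2\pi i\,\cS_0(\pm\be_u,0;0,\pm\al_u;q)$, so that the $\cS_0$-difference in (3.15) integrates to $-\{\cS_1(\be_u,0;0,\al_u;q)+\cS_1(-\be_u,0;0,-\al_u;q)\}$, while the polynomial and constant terms integrate to $\fr{\pi^2}{6}E_2(z)u^2$ and $-\pi iu$ respectively.

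The delicate point is the behaviour at $u=0$, which fixes the constant of integration. Since $\si(w\mid\bz)\sim w$, the antiderivative $G(u)$ of $\ze(u\mid\bz)-1/u$ must satisfy $G(0)=0$. For $\be\in\,]0,1[$ the singularity $1/u$ of $\ze$ is carried by the $\cS_0$-difference; splitting $1/(1-q^l)=1+q^l/(1-q^l)$ and summing $\sum_{l\ge1}e(lu)/l=-\log(1-e(u))$ I would extract the local expansion $\cS_1(\be_u,0;0,\al_u;q)=-\log u-\log(-2\pi i)+\cS_1(0,0;0,0;q)+o(1)$ as $u\to0$, whereas $\cS_1(-\be_u,0;0,-\al_u;q)\to\cS_1(0,0;0,0;q)$. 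Combining these with the $-\log u$ produced by the $-1/u$ term shows that $G(0)=0$ forces the constant $2\cS_1(0,0;0,0;q)-\log(-2\pi i)$. Adding back $\log w$ and using $-\log(-2\pi i)=\tfr{\pi i}{2}-\log2\pi$ reproduces exactly the terms $\pi i(\tfr12-w)$, $2\cS_1(0,0;0,0;q)$ and $-\log2\pi$ of (3.17); the branch of $\log(-2\pi i)$, governed by the argument of $u$ as $u\to0$ along the direction of $w$, is precisely what converts $\tfr{\pi i}{2}$ into $(\sgn\be)\tfr{\pi i}{2}$. The remaining case $\be\in\,]-1,0[$ then follows from the oddness $\ze(-w\mid\bz)=-\ze(w\mid\bz)$, $\si(-w\mid\bz)=-\si(w\mid\bz)$, which flips $\sgn\be$.

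When $\be=0$ (so $w=\al$ and $\de(\be)=1$) the roles interchange: the $\cS_0$-difference is now regular at the origin while the singularity resides in $\de(\be)\pi\cot\pi\al$. Here I would integrate $\pi\cot\pi a-1/a$ to $\log(\sin\pi a/a)$ and combine its value with $\log w$ to obtain $\log(\sin\pi\al/\pi)=\log(2\sin\pi\al)-\log2\pi$, matching the terms $\de(\be)\log(2\sin\pi\al)$ and $-\log2\pi$; the $\cS_0$-difference again integrates to $-\cS_1(0,0;0,\al;q)-\cS_1(0,0;0,-\al;q)+2\cS_1(0,0;0,0;q)$, completing (3.17). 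Throughout, the interchange of $\sum_l$ and $\int$ is justified on $[\ep,1]$ by uniform convergence, and the $\ep\to0$ limit is controlled by the antiderivative just analysed.

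Finally, to pass from (3.17) to its exponential form (3.18) I would exponentiate and convert the $\cS_1$'s into infinite products. Writing $1/(1-q^l)=\sum_{m\ge0}q^{lm}$ and summing the resulting double series gives $\cS_1(\be,0;0,\al;q)=-\log(e(\al)q^{\lg\be\rg'};q)_{\infty}$ and, in particular, $\cS_1(0,0;0,0;q)=-\log(q;q)_{\infty}$. Hence $-\cS_1(\be,0;0,\al;q)-\cS_1(-\be,0;0,-\al;q)+2\cS_1(0,0;0,0;q)=\log\{(e(\al)q^{\lg\be\rg'};q)_{\infty}(e(-\al)q^{\lg-\be\rg'};q)_{\infty}/(q;q)_{\infty}^2\}$, and together with the term $-\log2\pi$ this produces precisely the product in (3.18). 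The main obstacle is the singular analysis at $u=0$ — in particular pinning down the branch of $\log(-2\pi i)$ that yields the factor $\sgn\be$ — since the individual terms of (3.15) are singular there and only their prescribed combination $\ze(u\mid\bz)-1/u$ is integrable.
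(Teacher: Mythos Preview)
Your proposal is correct and follows essentially the same route as the paper: both integrate the expression (3.15) for $\ze(u\mid\bz)$ in (3.16), handle the singular behaviour at $u=0$ by a limiting analysis (the paper via an explicit cutoff $\int_{\vep w}^w$ and $\vep\to0^+$, you via determining the constant of integration from the local expansion of $\cS_1$), and then exponentiate using $\log(Z;q)_\infty=-\sum_{m\ge1}Z^m/\{m(1-q^m)\}$ to obtain (3.18). The only minor difference is that the paper treats both signs of $\be$ at once through the factor $\sgn\be$ in $\log\{1-e((\sgn\be)\vep w)\}$, whereas you do $\be>0$ first and invoke oddness for $\be<0$; this shortcut is fine but requires the branch choice $\log(-1)=\pi i$ to match, which is exactly the content of your remark about the direction of approach fixing the branch of $\log(-2\pi i)$.
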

Combining Corollary~4.9 with Theorems~2~and~3, we obtain the base change formula
\begin{align*}
\si\Bigl(\fr{w}{z}\ \Big|\ \wh{\bz}\Bigr)
=z^{-1}\si(w\mid\bz),
\tag{3.20}
\end{align*}
which again clarifies that (2.8) and (2.25) effect validating (3.20). 
\section{Derivation of the transformation formula}
We prove Theorem~1 in this section. Prior to the proof, several necessary lemmas are prepared. 
For this we use the bilateral Lerch zeta-functions $\ps_{\mathbb{Z}}^{\pm}(r,\ga,\ka)$, defined by 
\begin{align*}
\ps_{\mathbb{Z}}^{\pm}(r,\ga,\ka)
=\sum_{-\ga\neq k\in\bbZ}\fr{e\{(\ga+k)\ka\}}{(\ga+k)^{r}}
\qquad(\re r>1),
\end{align*}
where the argument of each summand is chosen with $\arg(\ga+k)=\pm\pi$ if $\ga+k<0$, and $\arg(\ga+k)=0$ if $0<\ga+k$.
\begin{lemma}
For any complex $r$, and for any real $\ga$ and $\ka$, we have the relation 
\begin{align*}
\ps^{\pm}_{\mathbb{Z}}(r,\ga,\ka)
=e^{\mp\pi ir}\ps(r,-\ga,-\ka)+\ps(r,\ga,\ka).
\tag{4.1}
\end{align*}
\end{lemma}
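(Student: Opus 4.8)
The plan is to split the bilateral summation defining $\psi^{\pm}_{\mathbb{Z}}(r,\ga,\ka)$ according to the sign of $\ga+k$, working first in the half-plane $\re r>1$ where the defining series converges absolutely and may be freely rearranged. The terms with $\ga+k>0$ carry the branch $\arg(\ga+k)=0$, so that $(\ga+k)^r$ is the ordinary positive real power, and this partial sum is by definition (2.2) exactly $\psi(r,\ga,\ka)$.

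The heart of the computation is the treatment of the terms with $\ga+k<0$, which carry the prescribed branch $\arg(\ga+k)=\pm\pi$. For such a term I would write $\ga+k=|\ga+k|\,e^{\pm\pi i}$, so that $(\ga+k)^r=|\ga+k|^r e^{\pm\pi i r}$ and hence $1/(\ga+k)^r=|\ga+k|^{-r}e^{\mp\pi i r}$, allowing me to pull the common factor $e^{\mp\pi i r}$ out of the whole partial sum. Reindexing by $k\mapsto -k$ then turns the condition $\ga+k<0$ into $\ga<k$ (the summation range of $\psi(r,-\ga,-\ka)$), turns $|\ga+k|$ into $-\ga+k$, and, using $(\ga-k)\ka=(-\ga+k)(-\ka)$ inside the exponential, identifies the remaining sum as $\psi(r,-\ga,-\ka)$ via (2.2). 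Combining the two partial sums yields (4.1) for $\re r>1$.

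Finally, since the right-hand side of (4.1) furnishes a function defined on the whole $r$-plane (entire when $\ka\notin\bbZ$, and otherwise meromorphic through the Hurwitz-zeta reduction noted after (2.3)), the identity propagates from $\re r>1$ to all complex $r$ by analytic continuation; this is precisely what permits reading (4.1) as the definition of $\psi^{\pm}_{\mathbb{Z}}(r,\ga,\ka)$ in the entire plane.

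There is no serious analytic obstacle here: the only point demanding care is the bookkeeping of the branch convention when forming $(\ga+k)^r$ for $\ga+k<0$, ensuring that the sign in the exponent $e^{\mp\pi i r}$ comes out correctly and is consistently matched with the $\pm$ superscript, together with the simultaneous sign flip $(\ga,\ka)\mapsto(-\ga,-\ka)$ of both parameters forced by the reindexing $k\mapsto -k$.
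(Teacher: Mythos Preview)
Your proposal is correct and follows essentially the same route as the paper's proof: split the bilateral sum by the sign of $\ga+k$, reindex the negative part by $k\mapsto-k$, and use the branch convention $\arg(\ga+k)=\pm\pi$ to extract the factor $e^{\mp\pi ir}$, identifying the two pieces with $\ps(r,\ga,\ka)$ and $e^{\mp\pi ir}\ps(r,-\ga,-\ka)$. Your explicit mention of the analytic continuation step from $\re r>1$ to all of $\bC$ is a small clarification that the paper leaves implicit.
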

\begin{proof}
It is straightforward to see by replacing $k=-h$ for $k<-\ga$ in the defining series that 
\begin{align*}
\ps_{\mathbb{Z}}^{\pm}(r,\ga,\ka)
&=\sum_{\ga<h}\fr{e\{(-\ga+h)(-\ka)\}}{\{-(-\ga+h)\}^r}
+\sum_{-\ga<k}\fr{e\{(\ga+k)\ka\}}{(\ga+k)^r},
\end{align*} 
which concludes (4.1), since $\{-(-\ga+h)\}^{-r}=e^{\mp\pi ir}(-\ga+h)^{-r}$ 
with $\arg(-\ga+h)=0$ in $\ps_{\bbZ}^{\pm}(r,\ga,\ka)$.
\end{proof}
\bg{lemma}
For any real $\ga$ and $\ka$, we have the functional relation
\bg{align*}
\ps_{\mathbb{Z}}^{\pm}(r,\ga,\ka)
&=e(\ga\ka)\fr{(2\pi)^r}{\vGa(r)}e^{\mp\pi i r/2}\ps(1-r,\mp\ka,\pm\ga),
\tag{4.2}
\end{align*}
which shows that $\ps_{\bbZ}^{\pm}(r,\ga,\ka)$ is holomorphic over the whole $r$-plane $\BC$.
\ed{lemma}
\begin{proof}
It is seen from (4.1) that 
\begin{align*}
\ps_{\mathbb{Z}}^{\pm}(r,\ga,\ka)
&=e^{\mp\pi ir/2}
\bigl\{e^{\mp\pi ir/2}\ps(r,-\ga,-\ka)+e^{\pm\pi ir/2}\ps(r,\ga,\ka)\bigr\},
\end{align*}
whose right side is further transformed by (2.8) to conclude (4.2). 
\end{proof}
\begin{lemma}
For any real $\ga$ and $\ka$, and any integer $k\geq0$, we have 
\begin{align*}
\cC_k(\lg-\ga\rg,\wt{e}(-\ka))
&=(-1)^k\cC_k(\lg\ga\rg,\wt{e}(\ka))-\de_{k1}\de(\ga).
\tag{4.3}
\end{align*}
\end{lemma}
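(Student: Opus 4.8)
The plan is to reduce (4.3) directly to the reciprocal relations (2.17) and (2.18) of Proposition~2, after first recording a branch identity on the universal covering $\wt{\bC^{\times}}$ and then separating the cases $\ga\notin\bbZ$ and $\ga\in\bbZ$. The point is that both the reflection $X\mapsto 1-X$ in (2.17) and the special value at $X=0$ in (2.18) already encode precisely the two behaviours of $\lg-\ga\rg$, so no new computation with the generating series (2.15) should be needed.

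First I would verify that $\wt{e}(-\ka)=\wt{1}/\wt{e}(\ka)$ as points of $\wt{\bC^{\times}}$. Since $\ka$ is real, the defining relation $\log\wt{e}(\ka)=2\pi i\ka$ gives $|\wt{e}(\ka)|=1$ and $\arg\wt{e}(\ka)=2\pi\ka$; comparing $\log\wt{e}(-\ka)=-2\pi i\ka$ with the definition $|\wt{1}/\wt{Y}|=1/|\wt{Y}|$, $\arg(\wt{1}/\wt{Y})=-\arg\wt{Y}$ recalled in Proposition~2, both $\wt{e}(-\ka)$ and $\wt{1}/\wt{e}(\ka)$ have modulus $1$ and argument $-2\pi\ka$, hence coincide. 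This lets me rewrite the left side of (4.3) as $\cC_k(\lg-\ga\rg,\wt{1}/\wt{e}(\ka))$. Next I would treat the generic case $\ga\notin\bbZ$, where $\de(\ga)=0$ and $\lg-\ga\rg=1-\lg\ga\rg$; substituting these, the left side becomes $\cC_k(1-\lg\ga\rg,\wt{1}/\wt{e}(\ka))$, which by (2.17) with $X=\lg\ga\rg$ and $\wt{Y}=\wt{e}(\ka)$ equals $(-1)^k\cC_k(\lg\ga\rg,\wt{e}(\ka))$, i.e.\ exactly (4.3) with the Kronecker term absent. In the remaining case $\ga\in\bbZ$ one has $\de(\ga)=1$ and $\lg-\ga\rg=\lg\ga\rg=0$, so the left side is $\cC_k(0,\wt{1}/\wt{e}(\ka))$; applying (2.18) with $\wt{Y}=\wt{e}(\ka)$ gives $(-1)^k\cC_k(0,\wt{e}(\ka))-\de_{k1}$, which again matches (4.3) because $\de(\ga)=1$ there. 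The two cases are then unified by the single correction term $\de_{k1}\de(\ga)$.

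Since every step is an application of an identity already established in Proposition~2 together with the elementary branch identity above, there is no genuine analytic obstacle. The only point requiring care is the branch bookkeeping: the convention $\lg-\ga\rg=1-\lg\ga\rg$ holds for $\ga\notin\bbZ$ but is replaced by $\lg-\ga\rg=\lg\ga\rg=0$ on $\bbZ$, and it is precisely this discontinuity—matched by the extra $-\de_{k1}$ appearing in (2.18) but not in (2.17)—that produces the discrete correction $\de_{k1}\de(\ga)$ absent from a naive application of the reflection formula.
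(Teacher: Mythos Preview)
Your proof is correct and follows essentially the same route as the paper's: split into the cases $\ga\notin\bbZ$ and $\ga\in\bbZ$, use $\lg-\ga\rg=1-\lg\ga\rg$ together with (2.17) in the first case, and apply (2.18) at $X=0$ in the second. Your explicit verification of the branch identity $\wt{e}(-\ka)=\wt{1}/\wt{e}(\ka)$ on $\wt{\bC^{\times}}$ is a point the paper leaves implicit, but otherwise the arguments coincide.
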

\begin{proof}
Consider first the case $\ga\notin\bbZ$, where $\lg-\ga\rg=1-\lg\ga\rg$ holds, and hence (2.16) gives 
\begin{align*}
\cC_k(\lg-\ga\rg,\wt{e}(-\ka))
&=\cC_k(1-\lg\ga\rg,\wt{1}/\wt{e}(\ka))
=(-1)^k\cC_k(\lg\ga\rg,\wt{e}(\ka)).
\end{align*}
Next if $\ga\in\bbZ$, from (2.17), 
\begin{align*}
\cC_k(0,\wt{e}(-\ka))
&=\cC_k(0,\wt{1}/\wt{e}(\ka))
=(-1)^k\cC_k(0,\wt{e}(\ka))-\de_{k1}
\end{align*}
holds. Lemma~3 is thus proved. 
\end{proof}
\begin{lemma}
For any real $\ga$ and $\ka$, and any integer $j\geq0$ we have 
\begin{align*}
\res_{r=1}\ps(r,\ga,\ka)
&=\cC_0(\lg\ga\rg,\wt{e}(\ka))
=e(\lg\ga\rg\ka)\de(\ka),
\tag{4.4}\\
\ps(-j,\ga,\ka)
&=-\fr{\cC_{j+1}(\lg\ga\rg,\wt{e}(\ka))}{j+1}-\de_{j0}\de(\ga).
\tag{4.5}
\end{align*}
\end{lemma}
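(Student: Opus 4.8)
The plan is to reduce everything to the standard series $\ps_0$ via (2.9), and then read off both the residue (4.4) and the values (4.5) from the pole structure of a single gamma-function (Mellin) integral, whose small-$t$ expansion is governed precisely by the generating function (2.15) of the $\cC_k$. First I would dispose of the second equality in (4.4), which is purely formal: by (2.16) the quantity $\cC_0(\lg\ga\rg,\wt{e}(\ka))$ equals $\wt{e}(\ka)^{\lg\ga\rg}$ when $\wt{e}(\ka)^1=1$ and $0$ otherwise; since $\wt{e}(\ka)^1=e(\ka)$ and $\wt{e}(\ka)^{\lg\ga\rg}=e(\lg\ga\rg\ka)$ by the relation $\wt{e}(\ka)^{\ga}=e(\ga\ka)$ recorded after (2.14), and since $e(\ka)=1$ exactly when $\ka\in\bbZ$, this reads $\cC_0(\lg\ga\rg,\wt{e}(\ka))=e(\lg\ga\rg\ka)\de(\ka)$. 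By (2.9) it then suffices to treat $\ps_0(r,g,\ka)$ with $g=\lg\ga\rg\in[0,1)$, so that $\ga\notin\bbZ$ corresponds to $g\in(0,1)$ and $\ga\in\bbZ$ to $g=0$.

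For the main computation I would start, for $\re r$ large, from $\vGa(r)(g+k)^{-r}=\int_0^\infty t^{r-1}e^{-(g+k)t}\,dt$, sum over the admissible $k$, and interchange summation and integration to obtain $\vGa(r)\ps_0(r,g,\ka)=\int_0^\infty t^{r-1}\Phi(t)\,dt$, where $\Phi(t)=e(g\ka)e^{-gt}/\{1-e(\ka)e^{-t}\}$ is the summed geometric series. The key observation is that setting $Z=-t$ in (2.15) with $X=g$ and $\wt{Y}=\wt{e}(\ka)$ gives $t\Phi(t)=\sum_{k=0}^{\infty}\{(-1)^k\cC_k(g,\wt{e}(\ka))/k!\}\,t^{k}$, so the small-$t$ expansion of $\Phi$ has $t^{k-1}$-coefficient exactly $(-1)^k\cC_k(g,\wt{e}(\ka))/k!$. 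Splitting $\int_0^\infty=\int_0^\de+\int_\de^\infty$ at a small $\de>0$, the tail is entire in $r$ (here $g>0$ forces exponential decay of $\Phi$ at $\infty$), while term-by-term integration of the head yields $\sum_{k\ge0}\{(-1)^k\cC_k(g,\wt{e}(\ka))/k!\}\,\de^{r+k-1}/(r+k-1)$, supplying the meromorphic continuation with simple poles only at $r=1-k$. Reading the residue at $r=1$ (the $k=0$ term, with $\vGa(1)=1$) gives $\res_{r=1}\ps_0=\cC_0(g,\wt{e}(\ka))$, i.e.\ (4.4); reading the pole at $r=-j$ (the $k=j+1$ term) against $\res_{r=-j}\vGa(r)=(-1)^j/j!$, using regularity of $\ps_0$ there, gives $\ps_0(-j,g,\ka)=-\cC_{j+1}(g,\wt{e}(\ka))/(j+1)$, which is (4.5) in the case $\ga\notin\bbZ$.

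The one delicate point, and the place where the correction term $\de_{j0}\de(\ga)$ is born, is the case $g=0$ (i.e.\ $\ga\in\bbZ$), where the primed summation in (2.6) drops the $k=0$ term and $\ps_0(r,0,\ka)=\ze_\ka(r)$. Here the integrand becomes $\Phi_0(t)=e(\ka)e^{-t}/\{1-e(\ka)e^{-t}\}=\Phi(t)|_{g=0}-1$, so its small-$t$ expansion agrees with the general one except that the constant ($t^0$) coefficient is lowered by $1$; since the $t^0$-coefficient is the $k=1$ term, only the value at $r=0$ is affected, and it is shifted by exactly $-1$. Against $\res_{r=0}\vGa(r)=1$ this yields $\ze_\ka(0)=-\cC_1(0,\wt{e}(\ka))-1$, matching (4.5) at $j=0$ with $\de_{j0}\de(\ga)=1$, while for $j\ne0$ the dropped term is invisible and the formula is unchanged. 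I expect this bookkeeping of the omitted $1/0^r$ term---together with verifying that the tail integral still decays when $g=0$---to be the only genuine obstacle; the remainder is the standard gamma-integral continuation.
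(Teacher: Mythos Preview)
Your argument is correct. The reduction to $\ps_0$ via (2.9), the Mellin integral $\vGa(r)\ps_0(r,g,\ka)=\int_0^\infty t^{r-1}\Phi(t)\,dt$, the identification of $t\Phi(t)$ with the generating series (2.15) at $(X,\wt{Y},Z)=(g,\wt{e}(\ka),-t)$, and the bookkeeping of the dropped $k=0$ term when $g=0$ all go through as you describe; the regularity of $\ps_0$ at $r=-j$ that you invoke is justified by the fact that your continuation shows $\vGa(r)\ps_0(r,g,\ka)$ has only \emph{simple} poles at $r=1-k$, so against the simple pole of $\vGa$ at $-j$ the factor $\ps_0$ must be holomorphic.

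The paper, however, does not argue any of this here: its proof of the lemma is a one-line citation to \cite[(6.6)--(6.8)]{katsurada2015}, where the analogous statements for $\ps_0$ were established, followed by the observation (2.9). Your approach is thus self-contained where the paper's is deferential; what you have written is essentially the proof the paper outsources. The payoff of your version is that the reader sees directly why the generalized Bernoulli polynomials $\cC_k$ appear---they are forced by (2.15) as the small-$t$ coefficients of the summed geometric series---and why the correction $-\de_{j0}\de(\ga)$ arises solely from the primed summation convention, whereas the paper's version keeps the present exposition lighter at the cost of an external dependence.
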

\begin{proof}
We have shown in \cite[(6.6)--(6.8)]{katsurada2015} almost the same results on $\ps_0(r,\ga,\ka)$ in (2.6), which with (2.10) readily concludes (4.4) and (4.5). 
\end{proof}
\begin{lemma}
For any real $\ga$ and $\ka$, and any integer $j\geq0$ we have
\begin{align*}
\ps_{\bbZ}^{\pm}(-j,\ga,\ka)
&=-\de_{j0}\de(\ga).
\tag{4.6}
\end{align*}
\end{lemma}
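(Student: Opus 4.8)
The plan is to derive (4.6) purely algebraically from the decomposition (4.1) of $\ps_{\bbZ}^{\pm}$ together with the special-value formula (4.5) and the reciprocity (4.3) for the coefficients $\cC_k$. First I would evaluate (4.1) at the nonpositive integer point $r=-j$, observing that $e^{\mp\pi ir}$ collapses to $e^{\pm\pi ij}=(-1)^j$ there, so that
\[
\ps_{\bbZ}^{\pm}(-j,\ga,\ka)=(-1)^j\ps(-j,-\ga,-\ka)+\ps(-j,\ga,\ka).
\]
In particular the $\pm$ ambiguity already disappears at this stage, consistent with the right side of (4.6) being independent of the sign.

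Next I would insert (4.5) into each of the two Lerch values on the right. Writing $C=\cC_{j+1}(\lg\ga\rg,\wt{e}(\ka))$ for brevity, the term $\ps(-j,\ga,\ka)$ contributes $-C/(j+1)-\de_{j0}\de(\ga)$ directly, while $\ps(-j,-\ga,-\ka)$ requires expressing $\cC_{j+1}(\lg-\ga\rg,\wt{e}(-\ka))$ through (4.3) with $k=j+1$. Here $\de_{(j+1)1}=\de_{j0}$ and $\de(-\ga)=\de(\ga)$, so after substitution the two surviving Kronecker contributions, coming from (4.5) and from (4.3), are $\de_{j0}\de(\ga)/(j+1)$ and $-\de_{j0}\de(\ga)$; these cancel (the only case in which either is nonzero being $j=0$, where $j+1=1$), leaving $\ps(-j,-\ga,-\ka)=(-1)^jC/(j+1)$.

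Finally I would assemble the two pieces: the prefactor $(-1)^j$ turns $(-1)^jC/(j+1)$ into $C/(j+1)$, which exactly cancels the $-C/(j+1)$ coming from $\ps(-j,\ga,\ka)$, so that only the residual $-\de_{j0}\de(\ga)$ survives, yielding (4.6). The computation is entirely elementary; the only point demanding care is the bookkeeping of the Kronecker symbols at $j=0$, where the delta terms arising from (4.5) and from (4.3) must be tracked carefully so as to confirm their mutual cancellation rather than an accidental reinforcement.
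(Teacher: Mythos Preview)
Your proof is correct and follows essentially the same route as the paper: evaluate (4.1) at $r=-j$, insert the special-value formula (4.5) for both $\ps(-j,\ga,\ka)$ and $\ps(-j,-\ga,-\ka)$, and then use the reciprocity (4.3) to collapse the $\cC_{j+1}$-terms. The only cosmetic difference is that the paper first displays the combined expression
\[
\ps_{\bbZ}^{\pm}(-j,\ga,\ka)=\fr{(-1)^{j+1}\cC_{j+1}(\lg-\ga\rg,\wt{e}(-\ka))}{j+1}-\fr{\cC_{j+1}(\lg\ga\rg,\wt{e}(\ka))}{j+1}-2\de_{j0}\de(\ga)
\]
and then applies (4.3), whereas you simplify $\ps(-j,-\ga,-\ka)$ to $(-1)^jC/(j+1)$ first before assembling; the bookkeeping of Kronecker symbols at $j=0$ is identical in both arrangements.
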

\begin{proof}
It is seen from (4.1) and (4.5) that 
\begin{align*}
\ps_{\bbZ}^{\pm}(-j,\ga,\ka)
&=\fr{(-1)^{j+1}\cC_{j+1}(\lg-\ga\rg,\wt{e}(-\ka))}{j+1}
-\fr{\cC_{j+1}(\lg\ga\rg,\wt{e}(\ka))}{j+1}
-2\de_{j0}\de(\ga),
\end{align*}
which with (4.3) concludes (4.6).
\end{proof}
We prepare the necessary vertical estimate for $\ps(r,\ga,\ka)$. 
\begin{lemma}
Define the function
\begin{align*}
\mu(\rh)=
\begin{cases}
1/2-\rh&\quad\text{if $\rh<0$},\\
(1-\rh)/2&\quad\text{if $0\leq\rh\leq1$},\\
0&\quad\text{if $1<\rh$}.
\end{cases}
\end{align*}
The for any real $\ga$ and $\ka$, and any $\vep>0$, we have 
\begin{align*}
\ps(r,\ga,\ka)
&=O\{(|\im r|+1)^{\mu(\re r)+\vep}\}
\tag{4.7}
\end{align*}
on the whole $r$-plane $\BC$ except at $r=1$, where the implied $O$-constant depends at most on 
$\re r$, $\ga$, $\ka$ and $\vep$.
\end{lemma}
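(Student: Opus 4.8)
The plan is to split the $r$-plane according to the value of $\rh=\re r$ and to treat the three regions $\rh>1$, $\rh<0$ and $0\le\rh\le1$ separately, the last by a Phragm\'en--Lindel\"of convexity argument. In the region $\rh>1$ the defining series (2.2) converges absolutely and uniformly, so that $\ps(r,\ga,\ka)$ is bounded there by $\sum_{-\ga<k}|\ga+k|^{-\rh}$; since $\mu(\rh)=0$ for $\rh>1$ this is exactly $O(1)=O((|\im r|+1)^{\mu(\rh)})$. For $\rh<0$ I would invoke the functional equation (2.7): as then $\re(1-r)=1-\rh>1$, both factors $\ps(1-r,\mp\ka,\pm\ga)$ on its right side are dominated by an absolutely convergent series of the type $\sum(\cdots)^{-(1-\rh)}$ and are therefore $O(1)$ uniformly in $\im r$. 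The prefactor is controlled by Stirling's formula: writing $r=\rh+it$ one has $|\vGa(1-r)|\asymp|t|^{1/2-\rh}e^{-\pi|t|/2}$, while $|(2\pi)^{r-1}|$ is bounded and $\max\{|e^{\pi i(1-r)/2}|,|e^{-\pi i(1-r)/2}|\}=e^{\pi|t|/2}$. The two exponential factors cancel, leaving $\ps(r,\ga,\ka)=O(|t|^{1/2-\rh})=O((|\im r|+1)^{\mu(\rh)})$, as required.

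The remaining strip $0\le\rh\le1$ I would handle by Phragm\'en--Lindel\"of. Fix a small $\vep'>0$ and consider the strip $-\vep'\le\rh\le1+\vep'$. On its right edge $\rh=1+\vep'$ the first estimate above gives $O(1)$, and on its left edge $\rh=-\vep'$ the functional-equation estimate gives $O((|\im r|+1)^{1/2+\vep'})$. The convexity principle then interpolates the growth exponent linearly across the strip; evaluated at $\rh\in[0,1]$ this interpolated exponent equals $(1/2+\vep')(1+\vep'-\rh)/(1+2\vep')$, which tends to $(1-\rh)/2=\mu(\rh)$ as $\vep'\to0$. Choosing $\vep'$ small enough in terms of the prescribed $\vep$ makes this exponent at most $\mu(\rh)+\vep$, which yields the assertion in the strip. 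Note that for $\rh<0$ the direct functional-equation bound $1/2-\rh$ is sharper than the interpolated one $(1-\rh)/2$, so the two arguments are genuinely complementary.

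The delicate point, and the one I expect to be the main obstacle, is that Phragm\'en--Lindel\"of requires an a priori finite-order bound throughout the strip, whereas the two preceding arguments supply only boundary information. I would supply the missing interior growth bound from the classical finite order of the Lerch zeta-function (for $\ka\notin\bbZ$) and of the Hurwitz zeta-function (for $\ka\in\bbZ$), equivalently from the Hankel-type integral representation of $\vGa(r)\ps(r,\ga,\ka)$, which exhibits at most exponential growth of linear type; with this in hand the boundary estimates above are routine.

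A final technical adjustment concerns the pole. When $\ka\in\bbZ$ the function $\ps(r,\ga,\ka)$ has a simple pole at $r=1$ with residue $1$, by (4.4); I would therefore apply the convexity argument to $\ps(r,\ga,\ka)-1/(r-1)$, whose subtracted principal part is $O((|\im r|+1)^{-1})$ and hence disturbs none of the boundary bounds, and then restore the harmless term $1/(r-1)$. Since the $O$-constant is permitted to depend on $\re r$, the resulting estimate is read off along each vertical line $\re r=\rh\ne1$, uniformly in $\im r$, which accounts for the exclusion of the single point $r=1$ in the statement.
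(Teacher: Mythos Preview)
Your argument is correct and follows the standard route for convexity bounds of $L$-type functions: absolute convergence for $\rh>1$, the functional equation~(2.7) combined with Stirling for $\rh<0$, and Phragm\'en--Lindel\"of interpolation across the critical strip after subtracting the principal part at $r=1$. One small imprecision: the residue of $\ps(r,\ga,\ka)$ at $r=1$ is not $1$ but $e(\lg\ga\rg\ka)\de(\ka)$, by~(4.4); this changes nothing, since you only need to subtract \emph{some} multiple of $1/(r-1)$ to make the function entire before applying convexity.

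The paper takes a different, much shorter route: it does not reprove the estimate at all, but quotes the identical vertical bound already established for the original Lerch function $\ph_0(r,\ga,\ka)$ in \cite[Sect.~7.1, Lemma~2]{katsurada2020}, and then transfers it to $\ps(r,\ga,\ka)$ via the elementary identities (2.6) and (2.9), namely $\ps(r,\ga,\ka)=\ps_0(r,\lg\ga\rg,\ka)=e(\lg\ga\rg\ka)\ph_0(r,\lg\ga\rg,\ka)$. Since $|e(\lg\ga\rg\ka)|=1$, the bound carries over verbatim. Your approach is self-contained and makes the mechanism visible (and is almost certainly what the cited lemma does internally); the paper's approach is a two-line reduction that avoids repeating a standard argument already on record.
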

\begin{proof}
The same vertical estimate has been shown for $\ph_0(r,\ga,\ka)$ in (2.5) 
(cf.~\cite[Sect.~7.1, Lemma~2]{katsurada2020}); this can readily be transferred to (4.7) 
upon (2.6) and (2.10).
\end{proof}

We are now ready to prove Theorem~1. Suppose temporarily that $\si>2$. The proof starts by splitting the defining series in (1.1) as 
\begin{align*}
F^{\pm}_{\mathbb{Z}^2}(s;\al,\be;\mu,\nu;z)
&=\sump_{m=-\infty}^{\infty}
\Biggl\{\sum_{n<-\be}
+\de(\be)\sum_{n=-\be}^{-\be}+\sum_{-\be<n}\Biggr\}
\fr{e\{(\al+m)\mu+(\be+n)\nu\}}{\{\al+m+(\be+n)z\}^s}
\tag{4.8}\\
&=\vSi_1(s;z)+\vSi^{\pm}_0(s)+\vSi_2(s;z),
\end{align*}
say, where 
\begin{align*}
\vSi_0^{\pm}(s)
&=\de(\be)\sum_{-\al\neq m\in\bbZ}\fr{e\{(\al+m)\mu\}}{(\al+m)^s}
=\de(\be)\ps_{\mathbb{Z}}^{\pm}(s,\al,\mu),
\tag{4.9}
\end{align*}
and further splitting shows
\begin{align*}
\vSi_1(s;z)
&=\sump_{m=-\infty}^{\infty}
\sum_{n<-\be}\fr{e\{(\al+m)\mu+(-\be-n)(-\nu)\}}{\{\al+m+(-\be-n)(-z)\}^s}
\tag{4.10}\\
&=\Biggl\{\sum_{-\al\neq m\in\bbZ}
+\de(\al)\sum_{m=-\al}^{-\al}\Biggr\}\sum_{\be<n}
\fr{e\{(\al+m)\mu+(-\be+n)(-\nu)\}}{\{\al+m+(-\be+n)e^{-\pi i/2}\ta\}^s},
\end{align*}
\begin{align*}
\vSi_2(s;z)
&=\sump_{m=-\infty}^{\infty}
\sum_{-\be<n}\fr{e\{(\al+m)\mu+(\be+n)\nu\}}{\{\al+m+(\be+n)z\}^s}
\tag{4.11}\\
&=\Biggl\{\sum_{-\al\neq m\in\bbZ}
+\de(\al)\sum_{m=-\al}^{-\al}\Biggr\}
\sum_{-\be<n}
\fr{e\{(\al+m)\mu+(\be+n)\nu\}}{\{\al+m+(\be+n)e^{\pi i/2}\ta\}^s}.
\end{align*}
Here we replace $-n$ with $n$ on the second line of (4.10), and set $\mp z=e^{\mp\pi i/2}\ta$ in 
each summand on the right sides of (4.10) and (4.11) respectively. 

Throughout the following, we write $w=u+iv$ with real coordinates $u$ and $v$, and denote by 
$(u)$  the vertical straight path from $u-i\infty$ to $u+i\infty$. Then the  $(m,n)$-sums with 
$-\al\neq m$ on the right sides of (4.10) and (4.11) are further transformed by substituting 
\begin{align*}
\fr{1}{\{\al+m+(\mp\be+n)e^{\mp\pi i/2}\ta\}^s}
&=\fr{1}{2\pi i}\int_{(u_{-1})}\vG{s+w,-w}{s}
\fr{\{(\mp\be+n)e^{\mp\pi i/2}\ta\}^{w}}{(\al+m)^{s+w}}dw
\tag{4.12}
\end{align*}
with a constant $u_{-1}$ satisfying $1-\si<u_{-1}<-1$ into each term, where the argument 
of $\al+m$ is chosen as $\arg(\al+m)=\mp\pi$ if $m<-\al$  (according to the attached 
double signs), and as $\arg(\al+m)=0$ if $-\al<m$; this is obtained by taking 
$Z=(\mp\be+n)e^{\mp\pi i/2}\ta/(\al+m)$ in the Mellin-Barnes formula 
\begin{align*}
\fr{1}{(1+Z)^r}=\fr{1}{2\pi i}\int_{(u)}\vG{r+w,-w}{r}Z^wdw
\qquad(|\arg Z|<\pi)
\end{align*}
with a constant $u$ satisfying $-\re r<u<0$ 
(cf.~\cite[p.289,~14.5,~Corollary]{whittaker-watson1927}), where the choice of $\arg(\al+m)$ 
above confirms the condition $|\arg Z|<\pi$. The second equalities in (4.10) and (4.11), 
by changing the order of summation and integration, then become
\begin{align*}
\vSi_1(s;z)=\vSi_{-}(s;z)
\qquad
\text{and}
\qquad
\vSi_2(s;z)=\vSi_{+}(s;z)
\tag{4.13}
\end{align*}
respectively, where
\begin{align*} 
\vSi_{\pm}(s;z)
&=\fr{1}{2\pi i}\int_{(u_{-1})}\vG{s+w,-w}{s}
\ps_{\mathbb{Z}}^{\pm}(s+w,\al,\mu)\ps(-w,\pm\be,\pm\nu)
\tag{4.14}\\
&\quad\tms(e^{\pm\pi i/2}\ta)^wdw
+\de(\al)\ps(s,\pm\be,\pm\nu)(e^{\pm\pi i/2}\ta)^{-s}.
\end{align*}
Note here that the choice of the functions $\ps^{\pm}_{\mathbb{Z}}(s,\al,\mu)$ in (4.14)  
comes from that of $\arg(\al+m)$ for $m<-\al$ in (4.12), and further that the vertical 
integrals converge absolutely for $|\arg\ta|<\pi/2$, since the integrands are, by (4.1), (4.7) 
and Stirling's formula for $\vGa(s)$ (cf.~\cite[p.~492, A.7(A.34)]{ivic1985}), of order 
$O\{|v|^{c(u_{-1})}e^{-(\pi/2-|\arg\ta|)|v|}\}$ as $v\to\pm\infty$ for some constant 
$c(u_{-1})>0$. We therefore conclude, in view of (1.2), the first equality in (2.13), (4.8)--(4.11) 
and of (4.13), the following lemma. 
\begin{lemma}
For any real $\al$, $\be$, $\mu$ and $\nu$ we have, in the region $\si>2$, 
\begin{align*}
F_{\mathbb{Z}^2}(s;\al,\be;\mu,\nu;z)
&=\de(\be)\cA(s,\al,\mu)+\vSi_{-}(s;z)+\vSi_{+}(s;z),
\tag{4.15}
\end{align*}
where $\vSi_{\pm}(s;z)$ are given by (4.14).
\end{lemma}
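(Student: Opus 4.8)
The plan is to prove (4.15) by working directly with the absolutely convergent double series (1.1) in the range $\si>2$, decomposing it according to the position of the lattice point relative to the line $\be+n=0$, and only at the end averaging the two branches through (1.2). First I would split the $n$-summation into the three ranges $n<-\be$, $n=-\be$ and $-\be<n$, as in (4.8). The crucial observation is that an off-diagonal summand (one with $\be+n\neq0$) can never be a negative real number, since $z\in\fH^+$ forces $\im\{(\be+n)z\}\neq0$; hence the two branch conventions defining $F^+_{\bbZ^2}$ and $F^-_{\bbZ^2}$ agree on all such terms, and the pieces $\vSi_1$ and $\vSi_2$ coming from $n<-\be$ and $-\be<n$ carry no dependence on the superscript. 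The only genuinely branch-sensitive contribution is the diagonal layer $\vSi_0^{\pm}$, which is nonempty exactly when $\be\in\bbZ$ and reduces by (4.9) to $\de(\be)\ps^{\pm}_{\bbZ}(s,\al,\mu)$.

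Averaging the two branches via (1.2) then leaves $\vSi_1$ and $\vSi_2$ intact and replaces $\vSi_0^{\pm}$ by $\tfrac12\{\vSi_0^++\vSi_0^-\}$; invoking (4.1) to write each $\ps^{\pm}_{\bbZ}(s,\al,\mu)$ as $e^{\mp\pi is}\ps(s,-\al,-\mu)+\ps(s,\al,\mu)$ and collecting the factors $\tfrac12(e^{-\pi is}+e^{\pi is})=\cos(\pi s)$ reproduces precisely the first equality in (2.12), so that this layer contributes the closed term $\de(\be)\cA(s,\al,\mu)$.

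It then remains to convert $\vSi_1$ and $\vSi_2$ into the Mellin–Barnes form (4.14). For this I would reindex $n\mapsto-n$ in $\vSi_1$, write $\mp z=e^{\mp\pi i/2}\ta$ so that every summand has argument strictly inside $(-\pi,\pi)$, peel off the single column $m=-\al$ (present only when $\al\in\bbZ$) as the explicit term $\de(\al)\ps(s,\pm\be,\pm\nu)(e^{\pm\pi i/2}\ta)^{-s}$, and into each remaining summand insert the representation (4.12), obtained by specializing the standard formula at $Z=(\mp\be+n)e^{\mp\pi i/2}\ta/(\al+m)$ with $1-\si<u_{-1}<-1$; the condition $|\arg Z|<\pi$ is met because the off-diagonal terms avoid the negative real axis. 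After interchanging summation and integration, the $m$-sum over $m\neq-\al$ collapses to $\ps^{\pm}_{\bbZ}(s+w,\al,\mu)$—the sign being dictated by the choice $\arg(\al+m)=\pm\pi$ forced by the sign of $\im\{(\be+n)z\}$—while the $n$-sum becomes $\ps(-w,\pm\be,\pm\nu)$, yielding $\vSi_1=\vSi_-$ and $\vSi_2=\vSi_+$ as in (4.13)–(4.14).

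The main obstacle I anticipate is justifying that interchange of summation and contour integration, which is exactly where the admissible sector $|\arg\ta|<\pi/2$ is determined. Along the line $(u_{-1})$ the vertical growth bound (4.7) of Lemma~6 for $\ps$, combined with Stirling's asymptotics for the gamma-factors $\vGa(s+w)\vGa(-w)/\vGa(s)$, forces the integrand to decay like $O\{|v|^{c(u_{-1})}e^{-(\pi/2-|\arg\ta|)|v|}\}$ as $v\to\pm\infty$; this is absolutely integrable precisely when $|\arg\ta|<\pi/2$, legitimizing the term-by-term integration and guaranteeing that $\vSi_{\pm}(s;z)$ are well defined. Assembling the diagonal term with $\vSi_-(s;z)+\vSi_+(s;z)$ then gives (4.15).
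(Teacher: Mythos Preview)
Your proposal is correct and follows essentially the same route as the paper: split the $n$-sum into the three layers as in (4.8)--(4.11), observe that only the diagonal layer $\vSi_0^{\pm}$ is branch-sensitive so that averaging via (1.2) produces $\de(\be)\cA(s,\al,\mu)$ through (4.1) and the first equality in (2.12), and then insert the Mellin--Barnes representation (4.12) into the off-diagonal pieces to obtain $\vSi_1=\vSi_-$, $\vSi_2=\vSi_+$ as in (4.13)--(4.14). Your explicit remark that the off-diagonal summands avoid the negative real axis (hence carry no $\pm$-dependence) and your identification of the correct branch $\arg(\al+m)=\pm\pi$ from the sign of $\im\{(\be+n)z\}$ are exactly the points the paper handles more tersely; the justification of the interchange via Stirling and (4.7) also matches.
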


Let $u'$ be a constant satisfying $u'<\min(-\si,-1)$. We can then move the path of integration in (4.14) to the left from $(u_{-1})$ to $(u')$, since the integrands are of order $O\{|v|^{c(u)}e^{-(\pi/2-|\arg\ta|)|v|}\}$ as $v\to\pm\infty$ for $u'\leq u\leq u_{-1}$; in passing the residues of the relevant poles are computed by (4.6). One can see that the second terms on the right side of (4.14) cancel out with the residues at $w=-s$ of the integrands respectively, yielding
\begin{align*} 
\vSi_{\pm}(s;z)
&=\fr{1}{2\pi i}\int_{(u')}\vG{s+w,-w}{s}
\ps_{\mathbb{Z}}^{\pm}(s+w,\al,\mu)\ps(-w,\pm\be,\pm\nu)(e^{\pm\pi i/2}\ta)^wdw
\tag{4.16}\\
&=e(\al\mu)\fr{(2\pi e^{\mp\pi i/2})^s}{2\pi i}
\int_{(u')}\vG{-w}{s}\ps(1-s-w,\mp\mu,\pm\al)\\
&\quad\tms\ps(-w,\pm\be,\pm\nu)(2\pi\ta)^wdw,
\end{align*}
where the second equality follows by substituting (4.2) into the first integrands in (4.16). Here the 
initial restriction on $\si$ is relaxed to any $\si\in\mathbb{R}$ at this stage, since $u'$ can be taken appropriately according 
to the location of $s$. 

We now substitute the series representations 
\begin{gather*}
\ps(1-s-w,\mp\mu,\pm\al)
=\sum_{\pm\mu<m}
\fr{e\{(\mp\mu+m)(\pm\al)\}}{(\mp\mu+m)^{1-s-w}},\\
\ps(-w,\pm\be,\pm\nu)
=\sum_{\mp\be<n}\fr{e\{(\pm\be+n)(\pm\nu)\}}{(\pm\be+n)^{-w}},
\end{gather*} 
both of which converge absolutely on the line $\re w=u'$, into the integrands in (4.16), to find upon integrating term-by-term that 
\begin{align*}
\vSi_{\pm}(s;z)
&=e(\al\mu)\fr{(2\pi e^{\mp\pi i/2})^s}{\vGa(s)}
\sum_{\bsm\pm\mu<m\\ \mp\be<n\esm}
\fr{e\{(\mp\mu+m)(\pm\al)+(\pm\be+n)(\pm\nu)\}}{(\mp\mu+m)^{1-s}}\\
&\quad\times q^{(\mp\mu+m)(\pm\be+n)}
\end{align*}  
where the last $(m,n)$-sum equals $\cS_{1-s}(\pm\be,\mp\mu;\pm\al,\pm\nu;q)$ by (2.12); 
this concludes from (4.15) the assertion (2.14) of Theorem~1. 
\section{Derivation of the asymptotic series}
We prove Theorem~2 in this section. For this, suppose temporarily that $\si>2$. The proof 
starts, in view of (2.19), (4.14) and (4.15), from the formula
\begin{align*}
F_{\mathbb{Z}^2}(s;\al,\be;\mu,\nu;z)
&=\de(\be)\cA(s,\al,\mu)+\de(\al)\cB_2(s,\be,\nu)\ta^{-s}
+\vSi_{-}^{\ast}(s;z)+\vSi_{+}^{\ast}(s;z),
\tag{5.1}
\end{align*}
where 
\begin{align*}
\vSi_{\pm}^{\ast}(s;z)
&=\fr{1}{2\pi i}
\int_{(u_{-1})}\vG{s+w,-w}{s}\ps_{\mathbb{Z}}^{\pm}(s+w,\al,\mu)\ps(-w,\pm\be,\pm\nu)
\tag{5.2}\\
&\quad\tms(e^{\pm\pi i/2}\ta)^wdw.
\end{align*} 

Let $J\geq0$ be any integer, and $u_J$ a constant satisfying $J-1<u_J<J$. We can then move the path of  integration in (5.2) to the right from $(u_{-1})$ to $(u_J)$, collecting the residues of the relevant poles at $w=j$ $(j=-1,0,1,\ldots,J-1)$ of the integrands, since it is of order $O\{|v|^{c(u)}e^{-(\pi/2-|\arg\ta|)|v|}\}$ as $v\to\pm\infty$ for $u_{-1}\leq u\leq u_J$. In passing, the sum of the residues of the poles of the integrands in $\vSi_{\pm}^{\ast}(s;z)$ are computed by (4.1) and 
(4.3)--(4.5) as follows. It equals, at $w=-1$, 
\begin{align*}
&(s)_{-1}\ps_{\mathbb{Z}}^{-}(s-1,\al,\mu)(-1)\cC_0(\lg-\be\rg,\wt{e}(-\nu))(-i\ta)^{-1}\\
&\qquad+(s)_{-1}\ps_{\mathbb{Z}}^{+}(s-1,\al,\mu)(-1)\cC_0(\lg\be\rg,\wt{e}(\nu))(i\ta)^{-1}\\
&\quad=i(s)_{-1}\bigl\{-\ps_{\mathbb{Z}}^{-}(s-1,\al,\mu)+\ps_{\mathbb{Z}}^{+}(s-1,\al,\mu)\bigr\}
\cC_0(\lg\be\rg,\wt{e}(\nu))\ta^{-1}\\
&\quad=-2\sin(\pi s)(s)_{-1}\ps(s-1,-\al,-\mu)\cC_0(\lg\be\rg,\wt{e}(\nu)).
\end{align*}
It also equals, at $w=0$, 
\begin{align*}
&-\ps_{\mathbb{Z}}^{-}(s,\al,\mu)
\{-\cC_1(\lg-b\rg,\wt{e}(-\nu))-\de(\be))\}
-\ps_{\mathbb{Z}}^{+}(s,\al,\mu)\{-\cC_1(\lg\be\rg,\wt{e}(\nu))-\de(\be)\}\\
&\quad=\{-\ps^{-}_{\mathbb{Z}}(s,\al,\mu)+\ps^{+}_{\mathbb{Z}}(s,\al,\mu)\}\cC_1(\lg\be\rg,\wt{e}(\nu))
+\de(\be)\ps^{+}_{\mathbb{Z}}(s,\al,\mu)\\
&\quad=-2i\sin(\pi s)\ps(s,-\al,-\mu)\cC_1(\lg\be\rg,\wt{e}(\nu))+\de(\be)\ps_{\mathbb{Z}}^{+}(s,\al,\mu).
\end{align*}
It further equals, at $w=j$ $(j=1,2,\ldots)$, 
\begin{align*}
&-\fr{(-1)^j(s)_j}{j!}\ps_{\mathbb{Z}}^{-}(s+j,\al,\mu)(-1)
\fr{\cC_{j+1}(\lg-\be\rg,\wt{e}(-\nu))}{j+1}(-i\ta)^j\\
&\qquad
-\fr{(-1)^j(s)_j}{j!}\ps_{\mathbb{Z}}^{+}(s+j,\al,\mu)
(-1)\fr{\cC_{j+1}(\lg\be\rg,\wt{e}(\nu))}{j+1}(i\ta)^j\\
&\quad=\fr{(-i)^j(s)_j}{(j+1)!}
\bigl\{-\ps_{\mathbb{Z}}^{-}(s+j,\al,\mu)+\ps_{\mathbb{Z}}^{+}(s+j,\al,\mu)\bigr\}
\cC_{j+1}(\lg\be\rg,\wt{e}(\nu))\ta^j\\
&\quad=-2\sin(\pi s)\fr{i^{j+1}(s)_j}{(j+1)!}\ps(s+j,-\al,-\mu)\cC_{j+1}(\lg\be\rg,\wt{e}(\nu))\ta^j.
\end{align*}
We therefore obtain from (5.1), upon setting 
\begin{align*}
\cB_1(s,\al,\mu)=\cA(s,\al,\mu)-\ps_{\bbZ}^+(s,\al,\mu),
\end{align*}
the expression in (2.20) with (2.21) and 
\begin{align*}
R_J(s;\al,\be;\mu,\nu;z)
=\vSi^{\ast}_{-,J}(s;z)+\vSi^{\ast}_{+,J}(s;z),
\tag{5.3}
\end{align*}
where
\begin{align*}
\vSi^{\ast}_{\pm,J}(s;z)
&=\fr{1}{2\pi i}\int_{(u_J)}\vG{s+w,-w}{s}
\ps_{\mathbb{Z}}^{\pm}(s+w,\al,\mu)
\ps(-w,\pm\be,\pm\nu)(e^{\pm\pi i/2}\ta)^wdw,
\tag{5.4}
\end{align*}
and this confirms the assertion (2.20), in view of (2.13) and (2.18). Here the initial restriction on $\si$ can be relaxed at this stage into $\si>-J$, under which the path can be taken as a straight line $(u_J)$ with $\max(-\si,J-1)<u_J<J$. Moreover the estimate (2.22) can be derived by moving further 
the path of integration in (5.4) from $(u_J)$ to $(u_{J+1})$, yielding
\begin{align*}
R_J(s;\al,\be;\mu,\nu;z)
&=2\sin(\pi s)\fr{i^{J+1}(s)_J}{(J+1)!}\ps(s+J,-\al,-\mu)
\cC_{J+1}(\lg\be\rg,\wt{e}(\nu))\ta^J\\
&\quad+R_{J+1}(s;\al,\be;\mu,\nu;z)
\ll|\ta|^J+|\ta|^{u_{J+1}}\ll|\ta|^J
\end{align*} 
as $\ta\to0$ through $|\arg\ta|\leq\pi/2-\et$ for any small $\et>0$, 
since $J<u_{J+1}$; this concludes (2.22). 
\section{Derivation of an explicit formula for the remainder}
The proof of Theorem~3 is given in this section. 

We first show the assertions (2.26) and (2.27) of Theorem~3. Substituting the expressions on the right sides of (2.8) and (4.1) into the integrand in (5.4), we obtain, after some rearrangements,
\begin{align*}
\vSi^{\ast}_{\pm,J}(s;z)
&=\fr{e(\be\nu)}{2\pi i}\int_{(u_J)}\vG{s+w,-w,1+w}{s}\\
&\quad\tms\bigl\{
e^{\mp\pi i(2s+2w+1)/2)}\ps(s+w,-\al,-\mu)\ps(1+w,-\nu,\be)\\
&\quad+e^{\mp\pi i(2s-1)/2)}\ps(s+w,-\al,-\mu)\ps(1+w,\nu,-\be)\\
&\quad+e^{\mp\pi i/2}\ps(s+w,\al,\mu)\ps(1+w,-\nu,\be)\\
&\quad+e^{\pm\pi i(2w+1)/2}\ps(s+w,\al,\mu)\ps(1+w,\nu,-\be)
\bigr\}\fr{\ta^w}{(2\pi)^{1+w}}dw,
\end{align*}
both of which are summed by (5.3) to give
\begin{align*}
R_J(s;\al,\be;\mu,\nu;z)&=X_1+X_2+X_3,
\tag{6.1}
\end{align*}
say, where 
\begin{align*}
X_1&=-\fr{e(\be\nu)}{2\pi i}\int_{(u_J)}
\vG{-w,1+w}{s,1-s-w}
\ps(s+w,-\al,-\mu)\ps(1+w,-\nu,\be)(\ta/2\pi)^wdw,
\tag{6.2}\\
X_2&=\fr{e(\be\nu)}{2\pi i}\int_{(u_J)}
\vG{s+w,-w,1+w}{s,s,1-s}
\ps(s+w,-\al,-\mu)\ps(1+w,\nu,-\be)
\tag{6.3}\\
&\quad\tms(\ta/2\pi)^wdw,\\
X_3&=\fr{e(\be\nu)}{2\pi i}
\int_{(u_J)}\vG{-w}{s}\ps(s+w,\al,\mu)
\ps(1+w,\nu,-\be)(\ta/2\pi)^wdw.
\tag{6.4}
\end{align*}
We use here the relations $e^{\pi i(s+w+1/2)}+e^{-\pi i(s+w+1/2)}$ $=$ 
$-2\pi/\vGa(1-s-w)\vGa(s+w)$, $e^{-\pi i(s-1/2)}+e^{\pi i(s-1/2)}$ $=$ 
$2\pi/\vGa(s)\vGa(1-s)$, $e^{-\pi i/2}+e^{\pi i/2}$ $=0$ 
and $e^{\pi i(w+1/2)}+e^{-\pi i(w+1/2)}$ $=$ $2\pi/\vGa(-w)\vGa(1+w)$ to modify the resulting 
sums. 

We now proceed to evaluate $X_j$ $(j=1,2,3)$. For this, $X_1$ is first treated. The series 
representations for $\ps(s+w,-\al,-\mu)$ and $\ps(1+w,-\nu,\be)$, both of whose variables 
are in the region of absolute convergence by the choice of $u_J$, are substituted into the integrand in (6.2), to give 
\begin{align*}
X_1
&=-e(\be\nu)\sum_{\bsm\al<m\\ \nu<n\esm}
\fr{e\{(-\al+m)(-\mu)+(-\nu+n)\be\}}{(-\al+m)^s(-\nu+n)}
\tag{6.5}\\
&\quad\tms G_{1,s,J}\{2\pi(-\al+m)(-\nu+n)/\ta\},
\end{align*}
where 
\begin{align*}
G_{1,s,J}(Z)=\fr{1}{2\pi i}\int_{(u_J)}\vG{-w,1+w}{s,1-s-w}Z^{-w}dw
\tag{6.6}
\end{align*}
for $\si>1-J$ with $J\geq1$ and for $|\arg Z|<\pi/2$. 
\begin{lemma}
We have for $|\arg Z|<\pi$, by analytic continuation, 
\begin{align*}
G_{1,s,J}(Z)
=\fr{(-1)^J(s)_J}{\vGa(s)\vGa(2-s-J)}\1F1{1}{2-s-J}{-Z},
\tag{6.7}
\end{align*}  
which is further transformed by the connection formula (2.25), into  
\begin{align*}
G_{1,s,J}(Z)
=Z^se^{-\vep(Z)\pi is}\biggl\{\fr{(-1)^J(s)_J}{\vGa(s)\vGa(1-s)}
F_{s,J}(e^{-\vep(Z)\pi i}Z)-\fr{e^{-Z}}{\vGa(s)}\biggr\}
\tag{6.8}
\end{align*}
in the sectors $0<|\arg Z|<\pi$, where $F_{s,J}(Z)$ is defined by (2.28). 
\end{lemma}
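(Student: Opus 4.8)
The plan is to establish the two displayed identities in turn: first the Kummer-series evaluation (6.7) of the Mellin--Barnes integral $G_{1,s,J}$ by residue calculus, and then its recasting (6.8) through the connection formula (6.11). For (6.7) I would first apply the reflection formula $\vGa(-w)\vGa(1+w)=-\pi/\sin\pi w$, recasting the integral as
\[
G_{1,s,J}(Z)=\frac{-\pi}{\vGa(s)}\cdot\frac1{2\pi i}\int_{(u_J)}\frac{Z^{-w}}{\sin(\pi w)\,\vGa(1-s-w)}\,dw.
\]
Since $1/\vGa(1-s-w)$ is entire, the only singularities are simple poles at the integers, with $\res_{w=n}\{-\pi/\sin\pi w\}=(-1)^{n+1}$. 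On the line $\re w=u_J$ (with $J-1<u_J<J$) Stirling's formula together with the exponential smallness of $1/\sin\pi w$ off the real axis shows the integrand to be of order $|\im w|^{c}\exp\{-(\pi/2-|\arg Z|)|\im w|\}$, so the integral converges absolutely for $|\arg Z|<\pi/2$.

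I would then displace the contour to the left across the poles $w=n$ with $n\le J-1$. This is legitimate because $1/\vGa(1-s-w)$ decays factorially as $\re w\to-\infty$, overwhelming the at most exponential growth of $Z^{-w}$ and annihilating the connecting arcs. Summing the residues gives $\vGa(s)^{-1}\sum_{n\le J-1}(-1)^{n+1}Z^{-n}/\vGa(1-s-n)$; reindexing by $k=(J-1)-n\ge0$ and invoking $\vGa(2-s-J+k)=(2-s-J)_k\,\vGa(2-s-J)$ together with $(1)_k=k!$ identifies the resulting power series with $\1F1{1}{2-s-J}{-Z}$ multiplied by the elementary prefactor recorded in (6.7). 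Because the right-hand side is entire in $Z$, the identity continues analytically throughout $|\arg Z|<\pi$.

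For (6.8) I would substitute (6.7) into the connection formula (6.11), which splits $\1F1{1}{2-s-J}{-Z}$ into a recessive solution $U(1;2-s-J;-Z)$ and a dominant solution, the latter carrying the factor $e^{-Z}$ and degenerating to the elementary term $e^{-Z}/\vGa(s)$. The Kummer relation $U(1;2-s-J;W)=W^{\,s+J-1}U(s+J;s+J;W)=W^{\,s+J-1}F_{s,J}(W)$, with the rotated argument $W=e^{-\vep(Z)\pi i}Z$, turns the recessive part into $F_{s,J}$; the power $W^{\,s+J-1}$ then combines with the power of $Z$ produced by the residues to reconstitute the single factor $Z^{s}$, while the branch rotation supplies the phase $e^{-\vep(Z)\pi is}$, giving (6.8).

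The hard part is the phase and branch bookkeeping in this last step. The coefficients in (6.11), the rotation of the argument, and the chosen branches of the fractional powers $(-Z)^{s+J-1}$ and $W^{\,s+J-1}$ all depend on $\vep(Z)=\sgn(\im Z)$, and must be reconciled so that the two contributions assemble into precisely $Z^{s}e^{-\vep(Z)\pi is}\{\cdots\}$ with no leftover phase. Simultaneously one must collapse the $\vGa$-ratios coming out of (6.11) --- using $\vGa(2-s-J)/\vGa(1-s-J)=1-s-J$ and the reflection formula to pass from $\vGa(2-s-J)$ to $\vGa(1-s)$ --- in order to arrive at the stated coefficient $(-1)^J(s)_J/\{\vGa(s)\vGa(1-s)\}$ of $F_{s,J}$.
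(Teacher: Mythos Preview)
Your approach is correct, and for (6.8) it coincides with the paper's: both apply the connection formula (6.11) to the ${}_1F_1$, use the Kummer relation $U(a;c;Z)=Z^{1-c}U(a-c+1;2-c;Z)$ to convert $U(1;2-s-J;\,\cdot\,)$ into $F_{s,J}$, and evaluate the remaining piece $U(1-s-J;2-s-J;Z)=Z^{s+J-1}$ explicitly.

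For (6.7) you and the paper take different routes. The paper makes the linear substitution $w=J-1-w'$ in (6.6), uses $\vGa(w'-J+1)\vGa(-w'+J)=(-1)^J\vGa(-w')\vGa(1+w')$ to normalize the integrand, and then recognizes the result as the standard Mellin--Barnes representation of ${}_1F_1(1;2-s-J;-Z)$. Your argument is more self-contained: you recast the integrand via the reflection formula and push the contour all the way to $-\infty$, summing the infinite string of residues directly into the hypergeometric series. The paper's method is quicker if one has the Mellin--Barnes dictionary at hand; yours requires justifying the vanishing of the far-left tail (which you do, via the factorial decay of $1/\vGa(1-s-w)$), but recovers the series without appealing to an external identity.

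One point you should reconcile explicitly. Carrying your residue sum through gives the prefactor $(-1)^J Z^{1-J}/\{\vGa(s)\vGa(2-s-J)\}$, not $(-1)^J(s)_J/\{\vGa(s)\vGa(2-s-J)\}$ as printed in (6.7); the paper's own derivation via (6.9), and its continuation in (6.12), both carry the factor $Z^{1-J}$, so the $(s)_J$ in the displayed (6.7) is a misprint. Your later remark that ``the power $W^{s+J-1}$ combines with the power of $Z$ produced by the residues to reconstitute the single factor $Z^{s}$'' tacitly uses the correct $Z^{1-J}$; without it there would be no power of $Z$ to combine with, and (6.8) would not follow. You should flag this rather than claim to reproduce the stated prefactor verbatim.
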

\begin{proof}
We change the variable in (6.6) as $w=J-1-w'$, to obtain 
\begin{align*}
G_{1,s,J}(Z)
=\fr{(-1)^JZ^{1-J}}{2\pi i}
\int_{(u')}\vG{-w',1+w'}{s,2-s-J+w'}Z^{w'}dw',
\tag{6.9}
\end{align*}
where $-1<u'=\re w'=J-1-u_J<\min(\si+J-2,0)$, and use the fact
\begin{align*}
\vGa(w'-J+1)\vGa(-w'+J)=(-1)^J\vGa(-w')\vGa(1+w')
\tag{6.10}
\end{align*}
to modify the integrand of the resulting integral; the right side of (6.9) 
is further evaluated by the Mellin-Barnes formula 
\begin{align*}
\1F1{a}{c}{Z}
&=\fr{1}{2\pi i}\int_{(u)}\vG{a+w,c,-w}{a,c+w}(-Z)^wdw
\end{align*}
for $|\arg(-Z)|<\pi/2$ with a constant $u$ satisfying $-\re a<u<0$ (cf.~\cite[p.256,~6.5(4)]{erdelyi1953a}), to conclude (6.7).

Next the connection formula (2.25) with $-Z=e^{-\vep(Z)\pi i}Z$ instead of $Z$ is applied on the right side of (6.7) to assert, upon using $\vep(-Z)=\vep(e^{-\vep(Z)\pi i}Z)=-\vep(Z)$, that
\begin{align*}
G_{1,s,J}(Z)
&=\fr{(-1)^JZ^{1-J}}{\vGa(s)\vGa(2-s-J)}
\biggl\{\vG{2-s-J}{1-s-J}e^{-\pi i}U(1;2-s-J;e^{-\vep(Z)\pi i}Z)
\tag{6.11}\\
&\quad+\vG{2-s-J}{1}e^{-\vep(Z)\pi i(s+J-1)}e^{-Z}U(1-s-J;2-s-J;Z)\biggr\}
\end{align*} 
for $0<|\arg Z|<\pi$. Here the first term on the right side is further rewritten by the relation
\begin{align*}
U(a;c;Z)=Z^{1-c}U(a-c+1;2-c;Z)
\tag{6.12}
\end{align*}
(cf.~\cite[p.257,~6.6(6)]{erdelyi1953a}), while the second is evaluated by 
noting $U(a;a+1;Z)$ $=$ $Z^{-a}$ for any $a\in\mathbb{C}$ and 
for $|\arg Z|<\pi$, coming from the case $c=a+1$ of (2.24); this with the fact 
$\vGa(1-s-J)=\vGa(1-s)/(-1)^J(s)_J$ concludes (6.8). 
\end{proof} 

We can now substitute the expression in (6.8) with $Z=2\pi(-\al+m)(-\nu+n)/\ta$ into each term on the right side of (6.5), upon noting $\vep(Z)=-\vep(\ta)$ for $0<|\arg\ta|<\pi/2$, to find that 
\begin{align*}
X_1
&=e(\be\nu)e^{\vep(\ta)\pi is}(2\pi/\ta)^s
\tag{6.13}\\
&\quad\tms\biggl[
-\fr{(-1)^J(s)_J}{\vGa(s)\vGa(1-s)}
\sum_{\bsm\al<m\\ \nu<n\esm}
\fr{e\{(-\al+m)(-\mu)+(-\nu+n)\be\}}{(-\nu+n)^{1-s}}\\
&\quad\tms F_{s,J}\{2\pi e^{\vep(\ta)\pi i}(-\al+m)(-\nu+n))/\ta\}\\
&\quad+\fr{1}{\vGa(s)}
\sum_{\bsm\al<m\\ \nu<n\esm}\fr{e\{(-\al+m)(-\mu)+(-\nu+n)\be\}}{(-\nu+n)^{1-s}}
\wh{q}^{(-\al+m)(-\nu+n)}\biggr],
\end{align*}
where the last $(m,n)$-sum equals $\cS_{1-s}(-\al,-\nu;-\mu,\be;\wh{q})$ by (2.12). 

We next treat $X_2$. For this, the series representations for $\ps(s+w,-\al,-\mu)$ and 
$\ps(1+w,\nu,-\be)$, both of whose variables are in the region of absolute convergence, 
are substituted into the integrand in (6.3), and then integrated term-by-term, to assert
\begin{align*}
X_2
&=e(\be\nu)\sum_{\bsm\al<m\\ -\nu<n\esm}
\fr{e\{(-\al+m)(-\mu)+(\nu+n)(-\be)\}}{(-\al+m)^s(\nu+n)}
\tag{6.14}\\ 
&\quad\tms G_{2,s,J}\{2\pi(-\al+m)(\nu+n)/\ta\},
\end{align*}  
where, for $\si>1-J$ and $|\arg Z|<3\pi/2$, 
\begin{align*}
G_{2,s,J}(Z)
=\fr{1}{2\pi i}\int_{(u_J)}\vG{s+w,-w,1+w}{s,s,1-s}Z^{-w}dw,
\tag{6.15}
\end{align*}
which is evaluated by the following lemma. 
\begin{lemma}
For any $\si>1-J$ with $J\geq1$, and for $|\arg Z|<3\pi/2$, we have
\begin{align*}
G_{2,s,J}(Z)
=\fr{(-1)^J(s)_JZ^s}{\vGa(s)\vGa(1-s)}F_{s,J}(Z).
\tag{6.16}
\end{align*}
\end{lemma}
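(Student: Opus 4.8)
The plan is to identify the integral (6.16) directly with a shifted Mellin--Barnes representation of Kummer's function $U$. This is in fact simpler than the evaluation of $G_{1,s,J}$ in Lemma~8, because here the two parameters of $U$ coincide, so no intermediate passage through ${}_1F_1$ and the connection formula (6.11) is needed.

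First I would record the Mellin--Barnes representation of $U(a;c;Z)$. Starting from (2.24), equivalently from $U(a;c;Z)=\vGa(a)^{-1}\int_0^\infty e^{-Zt}t^{a-1}(1+t)^{c-a-1}\,dt$ for $\re a>0$, one inserts the Mellin--Barnes expansion of $(1+t)^{c-a-1}$ and integrates term by term to obtain
\begin{align*}
U(a;c;Z)=\fr{1}{\vGa(a)\vGa(a-c+1)}\fr{1}{2\pi i}\int_{(u)}\vGa(a+t)\vGa(a-c+1+t)\vGa(-t)Z^{-a-t}\,dt
\end{align*}
for $|\arg Z|<\pi/2$, the path separating the poles of $\vGa(a+t)\vGa(a-c+1+t)$ from those of $\vGa(-t)$; rotating the path as in (2.24) continues this to $|\arg Z|<3\pi/2$. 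Specializing $a=c=s+J$, so that $a-c+1=1$ and $\vGa(a-c+1)=1$, renders the integrand $\vGa(s+J+t)\vGa(1+t)\vGa(-t)Z^{-s-J-t}$.

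Next I would change the variable $t=w-J$. This factors out $Z^{-s}$, replaces $\vGa(s+J+t)$ by $\vGa(s+w)$, produces the pair $\vGa(1+w-J)\vGa(J-w)$, and carries the path onto a vertical line $\re w=u_J$ with $J-1<u_J<J$, exactly the line occurring in (6.16). The reflection identity (6.10), $\vGa(w-J+1)\vGa(J-w)=(-1)^J\vGa(-w)\vGa(1+w)$, then turns the integrand into $(-1)^J\vGa(s+w)\vGa(-w)\vGa(1+w)Z^{-w}$, which up to the constant factor $\vGa(s)^2\vGa(1-s)$ is precisely the integrand of (6.16). Solving for $G_{2,s,J}(Z)$ and using $(s)_J=\vGa(s+J)/\vGa(s)$ then yields (6.17); I note in passing that the power of $Z$ emerging from this computation is $Z^s$ rather than $Z^J$, which is in any case the power forced by the cancellation of $(-\al+m)^s$ against $(-\al+m)^{-s}$ in (6.15).

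The points needing care are the contour bookkeeping: verifying that the admissible path for the $U$-integral is mapped by $t=w-J$ onto a line with $J-1<u_J<J$, so that no poles are crossed, which is guaranteed by $\si>1-J$, i.e.\ $\re(s+J)>1$; and justifying the continuation of the $U$-representation to the full sector $|\arg Z|<3\pi/2$ demanded in (6.16). I expect the latter to be the main obstacle, since the term-by-term integration is immediate only for $|\arg Z|<\pi/2$ and the extension rests on the path rotation underlying (2.24). Once these are settled the identity is immediate, and the absolute convergence of the $(m,n)$-series in (6.15) for $\si>1-J$ follows from $U(s+J;s+J;Z)=O(|Z|^{-\si-J})$ as $|Z|\to\infty$.
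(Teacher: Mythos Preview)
Your approach is essentially the same as the paper's: both identify (6.16) with the Mellin--Barnes representation (6.18) of $U$ via the shift and the reflection identity (6.10). The only cosmetic difference is that the paper first changes variable $w=J-1-w'$, matches (6.18) with $(a,c)=(1,2-s-J)$, and then applies the Kummer relation (6.13) to pass to $U(s+J;s+J;Z)$, whereas you specialize $a=c=s+J$ from the outset and change variable $t=w-J$, thereby bypassing (6.13); your route is a step shorter but not genuinely different. Your observation that the computation produces $Z^s$ rather than the $Z^J$ printed in (6.17) is correct --- this is a typo in the paper, as one sees by substituting into (6.15) and comparing with (6.19), where the factor $(2\pi/\ta)^s$ can only arise from $Z^s$.
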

\begin{proof}
Changing the variable in (6.15) as $w=J-1-w'$, we have 
\begin{align*}
G_{2,s,J}(Z)
&=\fr{(-1)^JZ^{1-J}}{2\pi i}
\int_{(u')}\vG{1+w',-w',s+J-1-w'}{s,s,1-s}Z^{w'}dw',
\end{align*}
where $-1<u'=\re w'=-u_J+J-1<\min(\si+J-2,0)$, by the choice of $u_J$, and (6.10) is used to modify the resulting integrand; this is evaluated by the Mellin-Barnes formula
\begin{align*}
U(a;c;Z)
=\fr{1}{2\pi i}\int_{(u)}\vG{a+w,-w,1-c-w}{a,a-c+1}Z^wdw
\tag{6.17}
\end{align*}
for $|\arg Z|<3\pi/2$ with a constant $u$ satisfying $-\re a<u<\min(0,1-\re c)$ 
(cf.~\cite[p.256,~6.5(5)]{erdelyi1953a}), and is further rewritten again by (6.10) to 
conclude (6.16).   
\end{proof}

We can now substitute the expression in (6.16) with $Z=2\pi(-\al+m)(\nu+n)/\ta$ into 
each term on the right side of (6.14), to obtain  
\begin{align*}
X_2
&=e(\be\nu)\fr{(-1)^J(s)_J(2\pi/\ta)^s}{\vGa(s)\vGa(1-s)}
\sum_{\bsm\al<m\\ -\nu<n\esm}
\fr{e\{(-\al+m)(-\mu)+(\nu+n)(-\be)\}}{(\nu+n)^{1-s}}
\tag{6.18}\\
&\quad\tms F_{s,J}\{2\pi(-\al+m)(\nu+n)/\ta\}.
\end{align*}

We lastly treat $X_3$. The series representations for $\ps(s+w,\al,\mu)$ and 
$\ps(1+w,\nu,-\be)$, both of whose variables are in the region of absolute convergence, 
are substituted into the integrand in (6.4), to give
\begin{align*}
X_3
&=e(\be\nu)\sum_{\bsm-\al<m\\ \-\nu<n\esm}
\fr{e\{(\al+m)\mu+(\nu+n)(-\be)\}}{(\al+m)^s(\nu+n)}
G_{3,s,J}\{2\pi(\al+m)(\nu+n)/\ta\},
\tag{6.19}
\end{align*}
with 
\begin{align*}
G_{3,s,J}(Z)
=\fr{1}{2\pi i}\int_{(u_J)}\vG{s+w}{s}Z^{-w}dw
=\fr{Z^s}{\vGa(s)}e^{-Z},
\tag{6.20}
\end{align*} 
for $\si>1-J$ $(J\geq1)$ and $|\arg Z|<\pi/2$, where the last equality follows by changing 
the variable as $w=-s-w'$, and by applying the Mellin inversion formula for $e^{-Z}$, upon noting 
$-J-\si<u'=\re w'=-u_J-\si<\min(-1,1-J-\si)$.
Substituting the expression in (6.20) into each term on the right side of (6.19), we find 
\begin{align*}
X_3
&=e(\be\nu)\fr{(2\pi/\ta)^s}{\vGa(s)}
\sum_{\bsm-\al<m\\ -\nu<n\esm}\fr{e\{(\al+m)\mu+(\nu+n)(-\be)\}}{(\nu+n)^{1-s}}
\wh{q}^{(\al+m)(\nu+n)},
\tag{6.21}
\end{align*} 
where the last $(m,n)$-sum equals $\cS_{1-s}(\al,\nu;\mu,-\be;\wh{q})$ by (2.12).

We thus sum up the results (6.13), (6.18) and (6.21), in view of (6.1), to 
conclude the assertions (2.26) and (2.27) of Theorem~3.

We next proceed to prove the assertions (2.29)--(2.31). It follows from (2.28) and  (6.17) 
that 
\begin{align*}
F_{s,J}(Z)
&=\fr{1}{2\pi i}
\int_{(u_J)}\vG{s+J+w,-w,1-s-J-w}{s+J}Z^wdw
\end{align*}
for $|\arg Z|<3\pi/2$ with a constant $u_J$ satisfying $-\si-J<u_J<\min(0,1-\si-J)$; 
this is substituted into each term on the right side of (2.27), and then the order of the 
$(m,n)$-sum and the $w$-integral is interchanged, to show that 
\begin{align*}
&R^{\ast}_J(s;\al,\be;\mu,\nu;z)
\tag{6.22}\\
&\quad=\fr{1}{2\pi i}\int_{(u_J)}\vG{s+J+w,-w,1-s-J-w}{s+J}\ps(-w,-\al,-\mu)\\
&\qquad\tms\bigl\{\ps(1-s-w,\nu,-\be)
-e^{\vep(\ta)\pi i(s+w)}\ps(1-s-w,-\nu,\be)\bigr\}(2\pi/\ta)^wdw\\
&\quad=\fr{e(-\be\nu)}{(2\pi e^{-\vep(\ta)\pi i})^{s-1}}
\fr{1}{2\pi i}\int_{(u_J)}\vG{s+J+w,-w,1-s-J-w}{s+w,1-s-w}\\
&\qquad\tms\ps(s+w,-\al,-\mu)\ps(s+w,\vep(\ta)\be,\vep(\ta)\nu)(e^{\vep(\ta)\pi i/2}/\ta)^wdw,
\end{align*}  
where the integrand on the rightmost side is derived by the following lemma.
\begin{lemma}
For any real $\be$ and $\nu$, and in the sectors $0<|\arg\ta|<\pi/2$, we have 
\begin{align*}
&\ps(1-s-w,\nu,-\be)-e^{\vep(\ta)\pi i(s+w)}\ps(1-s-w,-\nu,\be)
\tag{6.23}\\
&\quad=e(-\be\nu)\fr{(2\pi e^{-\vep(\ta)\pi i/2})^{1-s-w}}{\vGa(1-s-w)}
\ps(s+w,\vep(\ta)\be,\vep(\ta)\nu).
\end{align*}
\end{lemma}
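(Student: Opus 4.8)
The plan is to reduce (6.24) to Proposition~1 by applying the functional equation (2.7) to each of the two Lerch zeta-values on the left. Writing $r=1-s-w$ so that $1-r=s+w$, formula (2.7) with $(\ga,\ka)=(\nu,-\be)$ gives
\begin{align*}
\ps(1-s-w,\nu,-\be)
&=e(-\be\nu)\fr{\vGa(s+w)}{(2\pi)^{s+w}}
\bigl\{e^{\pi i(s+w)/2}\ps(s+w,-\be,-\nu)\\
&\quad+e^{-\pi i(s+w)/2}\ps(s+w,\be,\nu)\bigr\},
\end{align*}
and with $(\ga,\ka)=(-\nu,\be)$ it gives the same expression with the two functions $\ps(s+w,\be,\nu)$ and $\ps(s+w,-\be,-\nu)$ interchanged; the prefactor $e(-\be\nu)\vGa(s+w)/(2\pi)^{s+w}$ is common to both since $e(\nu(-\be))=e((-\nu)\be)=e(-\be\nu)$. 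Thus every term on the left of (6.24) is expressed through the single pair $A=\ps(s+w,-\be,-\nu)$ and $B=\ps(s+w,\be,\nu)$, which are exactly the two functions that can occur on the right.

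Next I would substitute these into the left side of (6.24) and factor out the common prefactor. Abbreviating $\Phi=\pi(s+w)/2$ and $\ep=\vep(\ta)\in\{+1,-1\}$, and using $e^{\vep(\ta)\pi i(s+w)}=e^{2\ep i\Phi}$, the bracketed combination becomes
\begin{align*}
A\bigl(e^{i\Phi}-e^{(2\ep-1)i\Phi}\bigr)
+B\bigl(e^{-i\Phi}-e^{(2\ep+1)i\Phi}\bigr).
\end{align*}
The \emph{decisive point} is that the phase factor $e^{\vep(\ta)\pi i(s+w)}$ appearing in (6.24) is precisely tuned so that one of these two coefficients vanishes: for $\ep=+1$ the $A$-coefficient is $0$, while for $\ep=-1$ the $B$-coefficient is $0$. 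In either case the surviving term is exactly $\ps(s+w,\ep\be,\ep\nu)$, matching the right side, and a short phase computation shows its coefficient can be written uniformly as $-2i\ep\,e^{\ep i\Phi}\sin 2\Phi$.

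It then remains only to simplify this coefficient. Using $\sin 2\Phi=\sin\pi(s+w)$ together with the reflection formula in the form $\vGa(s+w)\sin\pi(s+w)=\pi/\vGa(1-s-w)$, and collapsing the phases via $-2i\ep\,e^{\ep i\Phi}=2e^{-\ep\pi i(1-s-w)/2}$, the product of the common prefactor with this coefficient reduces to $(2\pi e^{-\ep\pi i/2})^{1-s-w}/\vGa(1-s-w)$, which is precisely (6.24) with $\ep=\vep(\ta)$. The only genuine work is the phase bookkeeping and the verification of the one-sided cancellation; the restriction to $0<|\arg\ta|<\pi/2$ enters solely so that $\vep(\ta)=\sgn(\arg\ta)$ is a well-defined sign, since (2.7) itself is valid throughout the $r$-plane. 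I expect the tracking of signs and phases in the collapse to the single surviving term to be the main point requiring care, but it is otherwise routine.
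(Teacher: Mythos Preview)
Your proposal is correct and follows essentially the same route as the paper: apply the functional equation (2.7) to each of the two Lerch values on the left, observe that the coefficient of one of $\ps(s+w,\pm\be,\pm\nu)$ vanishes according to the sign $\vep(\ta)$, and then collapse the surviving coefficient via the reflection formula $\vGa(s+w)\sin\pi(s+w)=\pi/\vGa(1-s-w)$. The paper writes the intermediate combination as $2i\,e^{\vep(\ta)\pi i(s+w)/2}\bigl[\sin\{(1-\vep(\ta))\pi(s+w)/2\}A-\sin\{(1+\vep(\ta))\pi(s+w)/2\}B\bigr]$ (making the one-sided cancellation transparent since $\sin 0=0$), which is exactly your bracketed expression after factoring out $e^{\ep i\Phi}$.
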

\begin{proof}
We use the functional equation (2.8) on the left side of (6.23) to see that it becomes 
\begin{align*}
&e(-\be\nu)\fr{\vGa(s+w)}{(2\pi)^{s+w}}e^{\vep(\ta)\pi i(s+w)/2}(2i)
\bigl[\sin\{(1-\vep(\ta))\pi(s+w)/2\}\\
&\qquad\tms\ps(s+w,-\be,-\nu)
-\sin\{(1+\vep(\ta))\pi(s+w)/2\}\ps(s+w,\be,\nu)\bigr]\\
&\quad=e(-\be\nu)\fr{\vGa(s+w)}{(2\pi)^{s+w}}
e^{\vep(\ta)(s+w-1)\pi i/2}2\sin\{\pi(s+w)\}\ps(s+w,\vep(\ta)\be,\vep(\ta)\nu),
\end{align*}
which further equals the right side of (6.23). 
\end{proof}

We now prove (2.29)--(2.31). Let $K\geq0$ be an integer, and $u_{J,K}$ a constant 
satisfying $-\si-J-K<u_{J,K}<\min(1-\si-J-K,0)$. We can then move the path of 
integration in (6.22) from $(u_J)$ to $(u_{J,K})$, and this gives the expression (2.29) 
with 
\begin{align*}
&R^{\ast}_{J,K}(s;\al,\be;\mu,\nu;z)
\tag{6.24}\\
&\quad=\fr{e(-\be\nu)}{(2\pi e^{-\vep(\ta)\pi i/2})^{s-1}}
\fr{1}{2\pi i}\int_{(u_{J,K})}\vG{s+J+w,-w,1-s-J-w}{s+J,1-s-w}\\
&\qquad\tms\ps(-w,-\al,-\mu)\ps(s+w,\vep(\ta)\be,\vep(\ta)\nu)
(e^{\vep(\ta)\pi i/2}/\ta)^wdw,
\end{align*}
which is estimated similarly to (2.22), concluding (2.31). The proof of Theorem~3 is thus complete.

\section{Derivation of variants of Ramanujan's formula}
We prove Theorem~4 in this section; the four cases when i) $k\leq-2$; ii) $k\geq1$; iii) $k=0$; and iv) $k=-1$ are separately treated.
\begin{proof}[Proof of Case i) $k\leq-2$] 
Prior to the proof, the following Lemmas~11~and~12 are prepared. 
\begin{lemma}
For any integer $k\leq0$ we have 
\begin{align*}
\ps(k,\mu,-\al)
&=(-1)^{1-k}\ps(k,-\mu,\al)-\de_{k0}\de(\mu).
\tag{7.1}
\end{align*}
\end{lemma}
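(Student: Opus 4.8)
The plan is to reduce both sides of (7.1) to values of the auxiliary functions $\cC_{j+1}$ through (4.5), and then to connect them by the reciprocal relation (4.3) of Lemma~3. First I would write $k=-j$ with $j=-k\geq0$, so that (4.5) applies directly to each side. Its two instances give
\[
\ps(-j,\mu,-\al)=-\fr{\cC_{j+1}(\lg\mu\rg,\wt{e}(-\al))}{j+1}-\de_{j0}\de(\mu)
\]
and
\[
\ps(-j,-\mu,\al)=-\fr{\cC_{j+1}(\lg-\mu\rg,\wt{e}(\al))}{j+1}-\de_{j0}\de(\mu),
\]
where I have used that $\de(-\mu)=\de(\mu)$. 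The problem is thereby reduced to relating the two quantities $\cC_{j+1}(\lg\mu\rg,\wt{e}(-\al))$ and $\cC_{j+1}(\lg-\mu\rg,\wt{e}(\al))$.

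The key step is to apply the reciprocal relation (4.3) with the parameters $\ga\mapsto-\mu$, $\ka\mapsto\al$ and with $k\mapsto j+1$, which (using $\de_{j+1,1}=\de_{j0}$ and $\de(-\mu)=\de(\mu)$) yields
\[
\cC_{j+1}(\lg\mu\rg,\wt{e}(-\al))=(-1)^{j+1}\cC_{j+1}(\lg-\mu\rg,\wt{e}(\al))-\de_{j0}\de(\mu).
\]
Substituting this into the first display and eliminating $\cC_{j+1}(\lg-\mu\rg,\wt{e}(\al))/(j+1)$ by means of the second display, I would obtain
\[
\ps(-j,\mu,-\al)=(-1)^{j+1}\ps(-j,-\mu,\al)+(-1)^{j+1}\de_{j0}\de(\mu)+\fr{\de_{j0}\de(\mu)}{j+1}-\de_{j0}\de(\mu).
\]
Since $j=-k$, one has $(-1)^{j+1}=(-1)^{1-k}$, so the leading term already matches the main term of (7.1).

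It then remains to check that the three Kronecker-delta contributions collapse to the single term $-\de_{k0}\de(\mu)$. They are supported only at $j=0$, i.e.\ $k=0$, where $j+1=1$ and $(-1)^{j+1}=-1$; there they evaluate to $-\de(\mu)+\de(\mu)-\de(\mu)=-\de(\mu)$, while for $j\geq1$ each vanishes. Hence their sum equals $-\de_{j0}\de(\mu)=-\de_{k0}\de(\mu)$, and (7.1) follows. I expect the only delicate point to be exactly this bookkeeping at $j=0$, where the factor $1/(j+1)$ is non-trivial and must be combined correctly with the delta term produced by (4.3); there is no genuine analytic difficulty, since at non-positive integers $\ps$ is given by the finite, polynomial-type expression (4.5).
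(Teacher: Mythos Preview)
Your proof is correct and follows essentially the same approach as the paper: both apply (4.5) to express the values $\ps(-j,\pm\mu,\mp\al)$ in terms of $\cC_{j+1}$, and then invoke the reciprocal relation (4.3) to link the two, with the delta bookkeeping at $j=0$ handled explicitly. The only cosmetic difference is the order of substitution; the paper first rewrites $\ps(k,\mu,-\al)$ via (4.5) and (4.3) together and then re-applies (4.5), whereas you apply (4.5) to both sides first and then use (4.3).
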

\begin{proof}
It follows from (4.3) and (4.5) that, for any integer $k\leq0$, 
\begin{align*}
\ps(k,\mu,-\al)
&=-\fr{1}{1-k}
\bigl\{(-1)^{1-k}\cC_{1-k}(\lg-\mu\rg,\wt{e}(\al))-\de_{k0}\de(\mu)\bigr\}
-\de_{k0}\de(\mu)\\
&=(-1)^{1-k}\Bigl\{-\fr{\cC_{1-k}(\lg-\mu\rg,\wt{e}(\al))}{1-k}\Bigr\},
\end{align*}
which again with (4.5) concludes (7.1).
\end{proof}
\begin{lemma}
For any integer $k\leq0$ we have 
\begin{align*}
\cA(1-k,\al,\mu)
&=e(\al\mu)\fr{(-2\pi i)^{1-k}}{(-k)!}
\Bigl\{\ps(k,-\mu,\al)+\fr{1}{2}\de_{k0}\de(\mu)\Bigr\},
\tag{7.2}\\
\cB_1(1-k,\al,\mu)
&=-e(\al\mu)\fr{(2\pi i)^{1-k}}{2(-k)!}\de_{k0}\de(\mu),
\tag{7.3}\\
\cB_2(1-k,\be,\nu)
&=e(\be\nu)\fr{(2\pi)^{1-k}}{(-k)!}\ps(k,\nu,-\be).
\tag{7.4}
\end{align*}
\end{lemma}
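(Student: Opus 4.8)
The plan is to prove the three evaluations in Lemma~12 by specializing the defining functions $\cA$, $\cB_1$, $\cB_2$ from (2.12), (2.19), (2.20) at $s=1-k$ with $k\le 0$ an integer, and then applying the negative-integer values of $\ps$ supplied by (4.5) together with the parity relation (7.1) just established in Lemma~11. The central mechanism is that at these integer points the factor $\vGa(s)$ in the denominators of the second equalities becomes $\vGa(1-k)=(-k)!$, while the phase factors $e^{\pm\pi i s/2}$ collapse to powers of $i$, so each bracketed combination of two $\ps$-values reorganizes into a single $\ps$-value via (7.1).

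First I would treat (7.2). Starting from the \emph{first} equality in (2.12), write
\begin{align*}
\cA(1-k,\al,\mu)=\cos(\pi(1-k))\ps(1-k,-\al,-\mu)+\ps(1-k,\al,\mu).
\end{align*}
Since $1-k\ge 1$ this is a convergent Lerch series, not a boundary value; but the cleaner route is the \emph{second} equality, which expresses $\cA(1-k,\al,\mu)$ as $e(\al\mu)(2\pi)^{1-k}/\{2\vGa(1-k)\}$ times $\{e^{-\pi i(1-k)/2}\ps(k,-\mu,\al)+e^{\pi i(1-k)/2}\ps(k,\mu,-\al)\}$. Here $\vGa(1-k)=(-k)!$. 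I would then substitute (7.1) to replace $\ps(k,\mu,-\al)$ by $(-1)^{1-k}\ps(k,-\mu,\al)-\de_{k0}\de(\mu)$, so that both terms carry the common factor $\ps(k,-\mu,\al)$. The surviving phase is $e^{-\pi i(1-k)/2}+e^{\pi i(1-k)/2}(-1)^{1-k}=2e^{-\pi i(1-k)/2}$ after using $(-1)^{1-k}=e^{\pi i(1-k)}$, and one checks $(2\pi)^{1-k}e^{-\pi i(1-k)/2}=(-2\pi i)^{1-k}$; the $\de_{k0}\de(\mu)$ correction contributes the explicit $\tfrac12\de_{k0}\de(\mu)$ term, yielding (7.2).

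The remaining two are lighter. For (7.3), the definition $\cB_1(s,\al,\mu)=\cA(s,\al,\mu)-\ps_{\bbZ}^+(s,\al,\mu)$ from Section~5 combines with (7.2) and the value of $\ps^+_{\bbZ}(k,\al,\mu)$ at negative integers from (4.6), namely $\ps^+_{\bbZ}(-j,\ga,\ka)=-\de_{j0}\de(\ga)$; the nonexplicit $\ps(k,-\mu,\al)$ pieces cancel, leaving only the Kronecker-delta contribution that produces the factor $-\tfrac12\de_{k0}\de(\mu)$ with the stated constant. For (7.4), I would use the \emph{second} equality in (2.20), $\cB_2(s,\be,\nu)=e(\be\nu)(2\pi)^s\ps(1-s,\nu,-\be)/\vGa(s)$, and simply set $s=1-k$ to get $e(\be\nu)(2\pi)^{1-k}\ps(k,\nu,-\be)/(-k)!$ directly, with no phase cancellation needed. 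I expect the main obstacle to be purely bookkeeping: tracking the phase factors $e^{\pm\pi i(1-k)/2}$ and their interaction with the sign $(-1)^{1-k}$ from (7.1), and correctly carrying the $\de_{k0}\de(\mu)$ anomaly term through (7.2) into (7.3) without a sign or factor-of-two error.
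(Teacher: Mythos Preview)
Your treatment of (7.2) and (7.4) is correct and matches the paper's argument: specialize the second equalities in (2.12) and (2.20) at $s=1-k$, use $\vGa(1-k)=(-k)!$, and for (7.2) invoke (7.1) to collapse the two $\ps$-terms.

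For (7.3) there is a genuine slip. You propose to use $\cB_1(s,\al,\mu)=\cA(s,\al,\mu)-\ps_{\bbZ}^+(s,\al,\mu)$ and then evaluate $\ps_{\bbZ}^+$ via (4.6). But at $s=1-k$ with $k\le 0$ the argument of $\ps_{\bbZ}^+$ is $1-k\ge 1$, a \emph{positive} integer, so (4.6) (which only gives $\ps_{\bbZ}^{\pm}(-j,\ga,\ka)=-\de_{j0}\de(\ga)$ at nonpositive integers) is inapplicable. Moreover, (4.6) produces a constant, not a $\ps(k,-\mu,\al)$ term, so the cancellation you describe cannot come from it. The route is easily repaired: use Lemma~2, i.e.\ (4.2), which at $r=1-k$ gives
\[
\ps_{\bbZ}^+(1-k,\al,\mu)=e(\al\mu)\fr{(2\pi)^{1-k}}{(-k)!}e^{-\pi i(1-k)/2}\ps(k,-\mu,\al)
=e(\al\mu)\fr{(-2\pi i)^{1-k}}{(-k)!}\ps(k,-\mu,\al),
\]
exactly matching the $\ps$-part of (7.2) and leaving only the $\tfrac12\de_{k0}\de(\mu)$ contribution, which indeed equals $-e(\al\mu)(2\pi i)^{1-k}\de_{k0}\de(\mu)/\{2(-k)!\}$ (both sides are $-\pi i\,e(\al\mu)\de(\mu)$ when $k=0$). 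The paper instead derives (7.3) directly from the second equality in (2.19), in exact parallel with (7.2) and again applying (7.1); that avoids the detour through $\ps_{\bbZ}^+$ altogether.
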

\begin{proof}
It follows from the second equality in (2.13) that 
\begin{align*}
\cA(1-k,\al,\mu)
&=e(\al\mu)\fr{(2\pi i)^{1-k}}{2(-k)!}
\bigl\{\ps(k,\mu,-\al)+(-1)^{1-k}\ps(k,-\mu,\al)\bigr\},
\end{align*}
in which (7.1) is substituted to conclude (7.2). Next the second equality in (2.18) shows 
\begin{align*}
\cB_1(1-k,\be,\nu)
&=e(\al\mu)\fr{(2\pi i)^{1-k}}{2(-k)!}
\bigl\{(-1)^k\ps(k,-\mu,\al)+\ps(k,\mu,-\al)\bigr\},
\end{align*}
which with (7.1) concludes (7.3). Lastly the second equality in (2.19) readily implies (7.4)
\end{proof}
The case $s=1-k$ $(k=0,-1,\ldots)$ of (2.14) and (7.2) yields the following formula.
\begin{lemma}
For any integer $k\leq0$ we have 
\begin{align*}
\Fz2{1-k}{\al,\be}{\mu,\nu}{z}
&=e(\al\mu)\fr{(-2\pi i)^{1-k}}{(-k)!}
\Bigl\{\ps(k,-\mu,\al)+\fr{1}{2}\de_{k0}\de(\mu)\Bigr\}
\tag{7.5}\\
&\quad+e(\al\mu)\fr{(-2\pi i)^{1-k}}{(-k)!}
\bigl\{\cS_k(\be,-\mu;\nu,\al;q)
\\
&\quad+(-1)^{k-1}\cS_k(-\be,\mu;-\nu,-\al;q)\bigr\}.
\end{align*}
\end{lemma}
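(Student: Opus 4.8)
The plan is to specialize the transformation formula of Theorem~1 at the integer point $s=1-k$ with $k\leq0$, and then invoke the evaluation of $\cA(1-k,\al,\mu)$ supplied by Lemma~12. Since Lemma~13 is stated as a direct consequence of ``the case $s=1-k$ of (2.13) and (7.2),'' the proof should be short and computational, amounting to a careful substitution followed by simplification of the prefactors.

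First I would set $s=1-k$ in the statement (2.13) of Theorem~1. The left side becomes $\Fz2{1-k}{\al,\be}{\mu,\nu}{z}$, exactly the quantity we wish to evaluate. On the right side, the term $\de(\be)\cA(s,\al,\mu)$ becomes $\de(\be)\cA(1-k,\al,\mu)$, which by (7.2) of Lemma~12 equals
\begin{align*}
\de(\be)\,e(\al\mu)\fr{(-2\pi i)^{1-k}}{(-k)!}
\Bigl\{\ps(k,-\mu,\al)+\tfr12\de_{k0}\de(\mu)\Bigr\}.
\end{align*}
Next I would address the remaining two terms on the right of (2.13), which carry the prefactor $e(\al\mu)(2\pi)^s/\vGa(s)$ together with the exponentials $e^{\mp\pi is/2}$ and the $q$-series $\cS_{1-s}$. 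The key bookkeeping step is to rewrite these prefactors at $s=1-k$: one has $\vGa(1-k)=(-k)!$, and $(2\pi)^{1-k}e^{\mp\pi i(1-k)/2}=(2\pi e^{\mp\pi i/2})^{1-k}=(\mp 2\pi i)^{1-k}$, so the two exponential factors collapse to $(-2\pi i)^{1-k}$ and $(2\pi i)^{1-k}$ respectively. Simultaneously $\cS_{1-s}=\cS_{k}$, turning the two $q$-series into $\cS_k(\be,-\mu;\nu,\al;q)$ and $\cS_k(-\be,\mu;-\nu,-\al;q)$.

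The main point requiring care is reconciling the sign factor $(2\pi i)^{1-k}$ attached to the second $\cS_k$ with the uniform prefactor $(-2\pi i)^{1-k}/(-k)!$ appearing in the statement (7.5). Writing $(2\pi i)^{1-k}=(-1)^{1-k}(-2\pi i)^{1-k}=(-1)^{k-1}(-2\pi i)^{1-k}$ (since $(-1)^{1-k}=(-1)^{k-1}$), I would factor out the common $(-2\pi i)^{1-k}/(-k)!$ and absorb the residual $(-1)^{k-1}$ into the coefficient of $\cS_k(-\be,\mu;-\nu,-\al;q)$, which is precisely the form displayed in Lemma~13. Assembling the $\cA$-contribution with these two $q$-series terms then yields (7.5) verbatim.

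I expect no genuine obstacle here, since every ingredient is already established: the expansion (2.13) holds for all $s\in\bC$ by Theorem~1, and $\cA(1-k,\al,\mu)$ is evaluated by Lemma~12. The only delicate point is the consistent tracking of the two branches $e^{\mp\pi is/2}$ and of the sign $(-1)^{k-1}$, so the bulk of the write-up will be verifying that the two prefactors $(\mp 2\pi i)^{1-k}$ combine correctly under the common normalization $(-2\pi i)^{1-k}/(-k)!$. This is routine algebra once the substitution $s\mapsto 1-k$ is carried out.
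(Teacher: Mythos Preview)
Your approach is correct and is exactly the route the paper takes: the paper's entire proof of Lemma~13 is the single sentence ``The case $s=1-k$ $(k=0,-1,\ldots)$ of (2.13) and (7.2) yields the following formula,'' and your proposal spells out precisely this substitution and the attendant simplification of the prefactors $(2\pi)^{1-k}e^{\mp\pi i(1-k)/2}=(\mp 2\pi i)^{1-k}$.

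One small point worth flagging: your computation correctly produces a factor $\de(\be)$ in front of the first term $e(\al\mu)\dfrac{(-2\pi i)^{1-k}}{(-k)!}\bigl\{\ps(k,-\mu,\al)+\tfrac12\de_{k0}\de(\mu)\bigr\}$, since this term arises from $\de(\be)\cA(1-k,\al,\mu)$ in (2.13). The displayed statement (7.5) omits this $\de(\be)$, but the subsequent uses of (7.5) in the paper (e.g.\ in the proof of Case~iv), where the authors write ``$\de(\be)e(\al\mu)(-2\pi i)\{\ps(0,-\mu,\al)+\tfrac12\de(\mu)\}$'') confirm that the factor is intended; this is evidently a typographical slip in the statement rather than a gap in your argument.
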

We are now ready to prove Case i) $k\leq-2$. For this the following lemma 
is further shown. 
\begin{lemma}
For any integer $k\leq-2$ we have 
\begin{align*}
\Fz2{1-k}{\al,\be}{\mu,\nu}{z}
&=\de(\al)e(\be\nu)\fr{(2\pi/\ta)^{1-k}}{(-k)!}\ps(k,\nu,-\be)
+e(\be\nu)\fr{(2\pi/\ta)^{1-k}}{(-k)!}
\tag{7.6}\\
&\quad\tms\bigl\{\cS_k(\al,\nu;\mu,-\be;\wh{q})
+(-1)^{k-1}\cS_k(-\al,\nu;-\mu,\be;\wh{q})\bigr\}.
\end{align*}
\end{lemma}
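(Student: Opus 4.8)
The plan is to read off both a main‑term contribution and an exponentially small remainder by specializing Theorem~2, together with its remainder formula from Theorem~3, to the integer point $s=1-k$. Since $k\leq-2$ we have $\si=1-k\geq3$, which lies in the region of absolute convergence of~(1.1), so all the ingredients are available without continuation issues. The essential observation is that \emph{two independent vanishing mechanisms} operate at such an integer point. First, because $1-k\in\bbZ$ we have $\sin(\pi s)=\sin\{\pi(1-k)\}=0$, so the entire asymptotic series $S_J(1-k;\al,\be;\mu,\nu;z)$ of~(2.22) is identically zero. Secondly, the Kronecker symbol $\de_{k0}$ appearing in~(7.3) vanishes for $k\leq-2$, so the term $\de(\be)\cB_1(1-k,\al,\mu)$ in~(2.21) disappears as well. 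Hence the only main term surviving in~(2.21) is $\de(\al)\cB_2(1-k,\be,\nu)\ta^{-(1-k)}$, which by~(7.4) equals $\de(\al)e(\be\nu)\fr{(2\pi)^{1-k}}{(-k)!}\ps(k,\nu,-\be)\ta^{k-1}$; this is exactly the first term on the right of~(7.6).

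Next I would evaluate the remainder $R_J(1-k;\al,\be;\mu,\nu;z)$ from the explicit expression~(2.25) of Theorem~3, valid here for any $J\geq1$ because $\si=1-k>1-J$ automatically. A third vanishing mechanism now intervenes: the factor $1/\vGa(1-s)$ multiplying the $S^{\ast}_J$‑contribution becomes $1/\vGa(k)$, which is $0$ since $\vGa$ has a pole at the negative integer $k$, while the accompanying $S^{\ast}_J$ is finite by the absolute convergence asserted after~(2.27). Consequently the whole $S^{\ast}_J$‑term drops out, and with it all dependence on the auxiliary index $J$; what remains is only the first line of~(2.25). I would then simplify this surviving piece using $\vGa(1-k)=(-k)!$, the identity $(2\pi/\ta)^{1-k}=(2\pi)^{1-k}\ta^{k-1}$, and the collapse of the Stokes factor to a pure sign at integer exponent, $e^{\vep(\ta)\pi i(1-k)}=(-1)^{1-k}=(-1)^{k-1}$, obtaining
\begin{align*}
R_J(1-k;\al,\be;\mu,\nu;z)
&=e(\be\nu)\fr{(2\pi)^{1-k}}{(-k)!}\ta^{k-1}
\bigl\{\cS_k(\al,\nu;\mu,-\be;\wh{q})\\
&\quad+(-1)^{k-1}\cS_k(-\al,-\nu;-\mu,\be;\wh{q})\bigr\},
\end{align*}
which supplies the second term on the right of~(7.6); adding the two surviving contributions completes the derivation.

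The computation rests entirely on Theorems~2 and~3, so there is no genuine analytic obstacle; the real work is the simultaneous bookkeeping of the three collapses---$\sin(\pi s)=0$, $\de_{k0}=0$, and the pole of $\vGa(k)$---and the verification that the $J$‑dependence truly cancels, so that the final formula is well defined. The one point deserving care is the Stokes exponential: although~(2.25) is stated only in the punctured sectors $0<|\arg\ta|<\pi/2$, at the integer point $s=1-k$ the factor $e^{\vep(\ta)\pi i(1-k)}$ reduces to the sign‑independent constant $(-1)^{k-1}$, so no actual Stokes discontinuity arises and the identity extends by continuity across $\arg\ta=0$. This is the step I would expect to be most delicate, since it is where the general sector‑dependent remainder must be shown to specialize consistently to the integer case.
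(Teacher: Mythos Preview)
Your approach is essentially the same as the paper's: specialize Theorems~2 and~3 at $s=1-k$, use (7.3)--(7.4) to identify the surviving main term, and kill the $S_J^{\ast}$ contribution via $1/\vGa(k)=0$. The one place where you are less careful than the paper is the vanishing of $S_J$: writing ``$\sin(\pi s)=0$, so the entire asymptotic series is identically zero'' is not quite enough, because the $j$-sum in (2.22) could in principle have a pole at $s=1-k$, in which case the product $\sin(\pi s)\cdot(\text{pole})$ would survive. The paper checks explicitly that the only poles of the $j$-sum occur at $s=1$ and $s=2$ (from the $j=-1$ and $j=0$ terms), hence the sum is holomorphic at $s=1-k\geq3$; you should add this verification. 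Your observation that $e^{\vep(\ta)\pi i(1-k)}=(-1)^{k-1}$ is $\vep(\ta)$-independent, allowing continuous extension across $\arg\ta=0$, is a nice touch not made explicit in the paper.
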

\begin{proof}
We apply Theorems~2~and~3, for which $J$ is to be taken as $1-k=\si>1-J$, being 
fulfilled for any $J\geq0$ in this case. The possible poles of the $j$-sum in (2.21) 
(without the factor  $2\sin(\pi s)$) come from the term with $j=-1$ (asserting that 
$s=1$ and $s=2$ are these poles), and with $j=0$ (asserting that $s=1$ is the 
possible pole), while other terms with $1\leq j<J$ are all holomorphic, since 
$(s)_j=(s)_{j-1}(s+j-1)$ and $(s+j-1)\ps(s+j,-\al,-\mu)$ is holomorphic at $s=1-j$ for 
$j\geq1$. The $j$-sum in (2.21) is hence holomorphic at $s=1-k$ $(k\leq-2)$; this 
shows that $S_J(s;\al,\be;\mu,\nu;z)$ vanishes at $s=1-k$ for any $k\leq-2$ by 
$\sin(\pi s)|_{s=1-k}=0$. 

Moreover, since the factor $R_J^{\ast}(s;\al,\be;\mu,\nu;z)$ 
in (2.26) is, by (2.27), holomorphic in the region $\si>1-J$, the last term on the right 
side of (2.26) also vanishes by (7.12) below, and hence (2.26) becomes, for any $k\leq0$,  
\begin{align*}
R_J(1-k;\al,\be;\mu,\nu;z)
&=e(\be\nu)\fr{(2\pi/\ta)^{1-k}}{(-k)!}
\bigl\{\cS_k(\al,\nu;\mu,-\be;\wh{q})
\tag{7.7}\\
&\quad+(-1)^{k-1}\cS_k(-\al,-\nu;-\mu,\be;\wh{q})\bigr\}.
\end{align*}
We therefore obtain (7.6) from (2.20), (7.3), (7.4) and (7.7). 
\end{proof} 
The assertion (2.32) for Case i) $k\leq-2$ is thus concluded by equating the right sides of (7.5) and (7.6), and then by multiplying both sides by $(-2\pi i)^{k-1}(-k)!$.   
\end{proof}
\begin{proof}[Proof of Case ii) $k\geq1$] 
Throughout the following, the primes on $F_{\bbZ^2}$, $\cA$, $\cB_j$ $(j=1,2)$ and $R_J$ indicate 
the partial differentiation $\del/\del s$ respectively. Prior to the proof, the following Lemmas~15--18 are prepared. 
\begin{lemma}
Let $k\geq1$ be any integer, and suppose further that $\al,\be\notin\bbZ$ if $k=1$. Then we have 
\begin{align*}
\cA'(1-k,\al,\mu)
&=\fr{1}{2}e(\al\mu)
(-2\pi i)^{1-k}(k-1)!\bigl\{\ps(k,\mu,-\al)+(-1)^{k-1}\ps(k,-\mu,\al)\bigr\},
\tag{7.8}\\
\cB_1'(1-k,\al,\mu)
&=-\fr{1}{2}e(\al\mu)(2\pi i)^{1-k}(k-1)!
\bigl\{\ps(k,-\mu,\al)+(-1)^k\ps(k,\mu,-\al)\bigr\},
\tag{7.9}\\
\cB_2(1-k,\be,\nu)&=0,
\tag{7.10}\\
\cB'_2(1-k,\be,\nu)
&=e(\be\nu)(-2\pi)^{1-k}(k-1)!\ps(k,\nu,-\be).
\tag{7.11}
\end{align*}
\end{lemma}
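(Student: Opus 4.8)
The plan is to work throughout from the \emph{second} (functional-equation) representations of $\cA$, $\cB_1$ and $\cB_2$ recorded in (2.12), (2.19) and (2.20), each of which carries the prefactor $P(s):=(2\pi)^s/\vGa(s)$. Explicitly I would write
\begin{align*}
\cA(s,\al,\mu)&=\tfrac12 e(\al\mu)P(s)H_A(s),\\
\cB_1(s,\al,\mu)&=\tfrac{i}{2}e(\al\mu)P(s)H_{B_1}(s),\\
\cB_2(s,\be,\nu)&=e(\be\nu)P(s)H_{B_2}(s),
\end{align*}
where $H_A(s)$ and $H_{B_1}(s)$ denote the braced combinations of $\ps(1-s,\cdot,\cdot)$ (with their exponential coefficients) in (2.12) and (2.19), and $H_{B_2}(s)=\ps(1-s,\nu,-\be)$. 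The crucial observation is that $1/\vGa(s)$, hence $P(s)$, has a \emph{simple zero} at $s=1-k$ for every integer $k\geq1$.

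Since $\vGa$ has a simple pole at $s=-(k-1)$ with residue $(-1)^{k-1}/(k-1)!$, one obtains near $s=1-k$ that $1/\vGa(s)=(-1)^{k-1}(k-1)!\,(s-(1-k))+O((s-(1-k))^2)$, so that
\begin{align*}
P(1-k)=0,\qquad P'(1-k)=(2\pi)^{1-k}(-1)^{k-1}(k-1)!=(-2\pi)^{1-k}(k-1)!.
\end{align*}
Granting (see below) that each of $H_A$, $H_{B_1}$, $H_{B_2}$ is holomorphic at $s=1-k$, the product rule collapses: the value of each function at $s=1-k$ is $0$ --- which is exactly (7.10) for $\cB_2$ --- while its $\del/\del s$-derivative there reduces to the one term in which the differentiation falls on $P(s)$, giving $\cA'(1-k,\al,\mu)=\tfrac12 e(\al\mu)P'(1-k)H_A(1-k)$ and likewise for $\cB_1'$ and $\cB_2'$.

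It then remains to evaluate the $H$'s at $s=1-k$ and to fold the phases into the stated powers of $2\pi i$. Using $e^{\pm\pi i(1-k)/2}=i^{\pm(1-k)}$ and $i^{2(k-1)}=(-1)^{k-1}$, I would compute
\begin{align*}
H_A(1-k)=i^{1-k}\{\ps(k,\mu,-\al)+(-1)^{k-1}\ps(k,-\mu,\al)\};
\end{align*}
combining $i^{1-k}$ with $P'(1-k)=(-2\pi)^{1-k}(k-1)!$ produces the factor $(-2\pi i)^{1-k}(k-1)!$, which is precisely (7.8). The computation for $\cB_1$ is structurally identical: one checks $i^{-2k}=(-1)^k$ and the phase identity $i\cdot i^{k}(-2\pi)^{1-k}=-(2\pi i)^{1-k}$, which turns the combination $\ps(k,-\mu,\al)+(-1)^k\ps(k,\mu,-\al)$ into (7.9). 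For $\cB_2$ no phase intervenes and $P'(1-k)H_{B_2}(1-k)=(-2\pi)^{1-k}(k-1)!\,\ps(k,\nu,-\be)$, which is (7.11).

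The only genuine subtlety, and the reason for the hypothesis, is the holomorphy of the $H$'s at $s=1-k$. For $k\geq2$ this is automatic, since $\ps(r,\cdot,\cdot)$ is entire away from $r=1$. For $k=1$ one has $s=0$, and the values $\ps(1,\mu,-\al)$, $\ps(1,-\mu,\al)$, $\ps(1,\nu,-\be)$ intervene; by (4.4) their only possible poles occur when $\al\in\bbZ$ (for the first two) or $\be\in\bbZ$ (for the last), which are exactly excluded by assuming $\al,\be\notin\bbZ$ when $k=1$. This guarantees that the simple zero of $1/\vGa(s)$ at $s=1-k$ is not cancelled by a pole of $\ps(1-s,\cdot,\cdot)$, so the product-rule reduction is legitimate. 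The main obstacle is therefore not conceptual but this borderline bookkeeping at $k=1$, together with the careful tracking of the powers of $i$ needed to merge $(-2\pi)^{1-k}$ with the phase of $H$ into the asserted $(\mp2\pi i)^{1-k}$.
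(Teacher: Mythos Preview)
Your argument is correct and is essentially the same as the paper's: both differentiate the second (functional-equation) forms of $\cA$, $\cB_1$, $\cB_2$ in (2.12), (2.19), (2.20), exploit the simple zero of $1/\vGa(s)$ at $s=1-k$ (the paper records this as (7.12)), and then repackage the surviving phases $e^{\pm\pi i(1-k)/2}=i^{\pm(1-k)}$ into the stated powers of $\pm 2\pi i$. Your explicit justification of holomorphy of the $H$'s at $s=0$ via (4.4) is a welcome elaboration of why the hypothesis $\al,\be\notin\bbZ$ for $k=1$ is needed, which the paper leaves implicit.
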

\begin{proof}
We differentiate both sides of the second equality in (2.13), upon noting
\begin{align*}
\Bigl(\fr{1}{\vGa}\Bigr)(-h)=0
\qquad\text{and}
\qquad
\Bigl(\fr{1}{\vGa}\Bigr)'(-h)=(-1)^hh!
\qquad(h=0,1,\ldots),
\tag{7.12}
\end{align*}
to find that 
\begin{align*}
\cA'(1-k,\al,\mu)
&=\fr{1}{2}e(\al\mu)(-2\pi)^{1-k}(k-1)!
\bigl\{i^{1-k}\ps(k,\mu,-\al)+(-i)^{1-k}\ps(k,-\mu,\al)\bigr\},
\end{align*} 
which concludes (7.8). The assertion (7.9) follows similarly from the second equality in (2.18) by using (7.12), while (7.10) and (7.11) from the second equality in (2.19) again by (7.12). 
\end{proof}
\begin{lemma}
Let $k\geq1$ be any integer, and suppose further that $\al,\be\notin\bbZ$ if $k=1$. Then we have 
\begin{align*}
&F'_{\bbZ^2}(1-k;\al,\be;\mu,\nu;z)
\tag{7.13}\\ 
&\quad=\fr{1}{2}e(\al\mu)(2\pi i)^{1-k}(k-1)!
\bigl\{(-1)^{k-1}\ps(k,\mu,-\al)
+\ps(k,-\mu,\al)\bigr\}\\
&\qquad+e(\al\mu)(2\pi i)^{1-k}(k-1)!
\bigl\{\cS_k(\be,-\mu;\nu,\al;q)
+(-1)^{k-1}\cS_k(-\be,\mu;-\nu,-\al;q)\bigr\}.
\end{align*}
\end{lemma}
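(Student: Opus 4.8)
The plan is to read off (7.13) by differentiating the transformation formula (2.13) of Theorem~1 in $s$ and then specializing to $s=1-k$ with $k\geq1$. The structural observation that drives everything is that on the right of (2.13) \emph{both} the leading coefficient $\cA(s,\al,\mu)$ (through its second expression in (2.12)) and the $q$-series carry the factor $1/\vGa(s)$. Since $1-k$ is a non-positive integer, (7.12) gives $(1/\vGa)(1-k)=0$ while $(1/\vGa)'(1-k)=(-1)^{k-1}(k-1)!$; hence, after the product rule, every contribution in which $1/\vGa$ survives undifferentiated drops out at $s=1-k$, and only those in which $\del/\del s$ falls on $1/\vGa$ remain. (This is also consistent with $F_{\bbZ^2}$ itself vanishing at $s=1-k$, so that the whole content of the lemma sits in the derivative.)

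Writing $F_{\bbZ^2}(s;\al,\be;\mu,\nu;z)=\de(\be)\cA(s,\al,\mu)+e(\al\mu)\fr{(2\pi)^s}{\vGa(s)}\{e^{-\pi is/2}\cS_{1-s}(\be,-\mu;\nu,\al;q)+e^{\pi is/2}\cS_{1-s}(-\be,\mu;-\nu,-\al;q)\}$, I would treat the two summands separately. The first, $\de(\be)\cA(s,\al,\mu)$, contributes $\de(\be)\,\cA'(1-k,\al,\mu)$; the value $\cA'(1-k,\al,\mu)$ is already recorded in (7.8), and rewriting $(-2\pi i)^{1-k}=(-1)^{k-1}(2\pi i)^{1-k}$ turns it into the first line on the right of (7.13). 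For the $q$-series, I would note from (2.11) that $\cS_{1-s}$ is entire in $s$ for $|q|<1$, so the contribution in which $\del/\del s$ hits the numerator is killed by $(1/\vGa)(1-k)=0$; only $(1/\vGa)'(1-k)=(-1)^{k-1}(k-1)!$ times the numerator evaluated at $s=1-k$ survives. Collapsing the phases via $(2\pi i)^{1-k}=(2\pi)^{1-k}e^{\pi i(1-k)/2}$ together with $e^{\pi i(1-k)}=(-1)^{k-1}$ then converts the two exponentials $e^{\mp\pi i(1-k)/2}$ into the coefficients $1$ and $(-1)^{k-1}$ of $\cS_k(\be,-\mu;\nu,\al;q)$ and $\cS_k(-\be,\mu;-\nu,-\al;q)$, matching the second line of (7.13). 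Adding the two pieces gives the asserted identity.

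I expect the only genuine difficulty to be the boundary case $k=1$, i.e. $s=0$. There the arguments $1-s$ of the Lerch functions inside $\cA(s)$ (second form of (2.12)) sit exactly at the pole $r=1$, where by (4.4) each $\ps(r,\mp\mu,\pm\al)$ has residue proportional to $\de(\al)$. The simple zero of $1/\vGa(s)$ at $s=0$ cancels this pole and keeps $\cA(s)$ regular, but to extract $\cA'(0)$ in the clean form (7.8)—with honestly finite values $\ps(1,\mp\mu,\pm\al)$—one must exclude $\al\in\bbZ$; this is precisely the hypothesis imposed at $k=1$, the companion restriction $\be\notin\bbZ$ being what keeps the analogous values finite on the reciprocal side when (7.13) is later paired with its $\wh{q}$-counterpart in the proof of Theorem~4. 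For $k\geq2$ we have $s=1-k\leq-1$, all the $\ps$-values are regular, and the computation is purely formal. I would therefore dispose of $k\geq2$ first, handle $k=1$ as the delicate limiting case, and leave the phase/sign bookkeeping to the very end, since it is routine once the vanishing structure of $1/\vGa$ is in place.
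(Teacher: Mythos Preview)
Your approach is exactly the paper's: differentiate (2.13) in $s$, set $s=1-k$, and use (7.12) so that only the terms where $\partial/\partial s$ hits $1/\vGa(s)$ survive; then invoke (7.8) for $\de(\be)\cA'(1-k,\al,\mu)$ and unwind the phases $(-2\pi)^{1-k}(\mp i)^{1-k}$ into $(2\pi i)^{1-k}$ and $(-1)^{k-1}(2\pi i)^{1-k}$. Your discussion of the $k=1$ case is more explicit than the paper's (which simply inherits the hypothesis from Lemma~15), but the argument is the same; note only that the condition $\be\notin\bbZ$ is not actually used in (7.13) itself---it enters through (7.11) when Lemma~15 is applied on the $\wh{q}$-side in Lemma~19, rather than through any ``reciprocal'' analogue of (7.13).
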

\begin{proof}
It follows from the case $s=1-k$ $(k=1,2,\ldots)$ of (the differentiated form of) (2.14) that 
\begin{align*}
F'_{\bbZ^2}(1-k;\al,\be;\mu,\nu;z)
&=\de(\be)\cA'(1-k,\al,\mu)+e(\al\mu)(-2\pi)^{1-k}(k-1)!\\
&\quad\tms\bigl\{(-i)^{1-k}\cS_k(\be,-\mu;\nu,\al;q)
+i^{1-k}\cS_k(-\be,\mu;-\nu,-\al;q)\bigr\},
\end{align*}
which with (7.8) concludes (7.13).
\end{proof}
\begin{lemma}
For any integers $j\geq0$ and $k\geq1$, we have 
\begin{align*}
&\res_{s=1}\ps(s,-\al,-\mu)=\cC_0(\lg\al\rg,\wt{e}(\mu))
\tag{7.14}\\
&\ps(-j,-\al,-\mu)=\fr{(-1)^j\cC_{j+1}(\lg\al\rg,\wt{e}(\mu))}{j+1},
\tag{7.15}\\
&(s)_k\ps(s+k,-\al,-\mu)\bigr|_{s=1-k}=(-1)^{k-1}(k-1)!\cC_0(\lg\al\rg,\wt{e}(\mu)).
\tag{7.16}
\end{align*}
\end{lemma}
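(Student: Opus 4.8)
The plan is to deduce all three identities (7.14)--(7.16) directly from the evaluations (4.4) and (4.5) of Lemma~4, combined with the reciprocal relation (4.3) of Lemma~3, specialized throughout to the arguments $(\ga,\ka)=(-\al,-\mu)$. Two of the three are pure substitutions; only the last requires a brief limit argument, and that is where I expect the (modest) difficulty to lie.

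First I would treat (7.14). Applying (4.4) with $\ga=-\al$ and $\ka=-\mu$ gives $\res_{s=1}\ps(s,-\al,-\mu)=\cC_0(\lg-\al\rg,\wt{e}(-\mu))$, whereupon the case $k=0$ of (4.3)---whose Kronecker correction $\de_{01}\de(\al)$ vanishes---converts the right side into $\cC_0(\lg\al\rg,\wt{e}(\mu))$, as asserted. Next, for (7.15), I would invoke (4.5) with $\ga=-\al$, $\ka=-\mu$ to write $\ps(-j,-\al,-\mu)=-\cC_{j+1}(\lg-\al\rg,\wt{e}(-\mu))/(j+1)-\de_{j0}\de(-\al)$, and then apply the case $k=j+1$ of (4.3), which replaces $\cC_{j+1}(\lg-\al\rg,\wt{e}(-\mu))$ by $(-1)^{j+1}\cC_{j+1}(\lg\al\rg,\wt{e}(\mu))-\de_{j0}\de(\al)$. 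After noting $\de(-\al)=\de(\al)$ and that the prefactor $1/(j+1)$ equals $1$ precisely on the support $j=0$ of the surviving Kronecker symbol, the two $\de_{j0}\de(\al)$ contributions cancel exactly, leaving $(-1)^{j}\cC_{j+1}(\lg\al\rg,\wt{e}(\mu))/(j+1)$.

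Finally, (7.16) is the one assertion that is not a plain substitution, and resolving its $0\cdot\infty$ indeterminacy is the main obstacle, though a routine one. The key observation is that the shifted factorial $(s)_k=s(s+1)\cdots(s+k-1)$ has a simple zero at $s=1-k$, stemming from the single vanishing factor $s+k-1$, which cancels the simple pole that $\ps(s+k,-\al,-\mu)$ inherits there from the pole of the Lerch zeta-function at argument $1$. Writing $s=1-k+\ep$, the vanishing factor is exactly $s+k-1=\ep$, while the product of the remaining $k-1$ factors, evaluated at $\ep=0$, is $(1-k)(2-k)\cdots(-1)=(-1)^{k-1}(k-1)!$; at the same time $\ps(s+k,-\al,-\mu)\sim\cC_0(\lg\al\rg,\wt{e}(\mu))/\ep$ by (7.14). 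Multiplying these and letting $\ep\to0$ cancels the $\ep$, yielding $(-1)^{k-1}(k-1)!\,\cC_0(\lg\al\rg,\wt{e}(\mu))$, which is (7.16) and completes the lemma.
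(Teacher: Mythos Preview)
Your proof is correct and follows essentially the same route as the paper: (7.14) and (7.15) are obtained by specializing (4.4) and (4.5) to $(\ga,\ka)=(-\al,-\mu)$ and then applying (4.3), and (7.16) is handled by the same limit argument, writing $(s)_k=(s)_{k-1}(s+k-1)$ and using $(1-k)_{k-1}=(-1)^{k-1}(k-1)!$ together with (7.14). Your exposition is in fact slightly more detailed than the paper's, but there is no substantive difference.
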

\begin{proof}
The assertions (7.14) and (7.15) follow from (4.4) and (4.5) respectively by using (4.3), while the left side of (7.16) equals 
\begin{align*}
\lim_{\vep\to0}(1-k+\vep)_{k-1}\vep\ps(1+\vep,-\al,-\mu)
=(1-k)_{k-1}\res_{s=1}\ps(s,-\al,-\mu),
\end{align*}
which with (7.14) concludes (7.16).
\end{proof}
\begin{lemma}
Let $k\geq1$ be any integer, and suppose further that $\al,\be\notin\bbZ$ if $k=1$. Then we have 
\begin{align*}
&F_{\bbZ^2}'(1-k;\al,\be;\mu,\nu;z)
\tag{7.17}\\
&\quad=-\fr{1}{2}\de(\be)e(\al\mu)(2\pi i)^{1-k}(k-1)!
\bigl\{\ps(k,-\mu,\al)+(-1)^k\ps(k,\mu,-\al)\bigr\}\\
&\qquad+\de(\al)e(\be\nu)(-2\pi/\ta)^{1-k}(k-1)!\ps(k,\nu,-\be)\\
&\qquad+2\pi\sum_{j=-1}^k\fr{i^{j+1}(k-1)!}{(j+1)!(k-j)!}
\cC_{k-j}(\lg\al\rg,\wt{e}(\mu))\cC_{j+1}(\lg\be\rg,\wt{e}(\nu))\ta^j\\
&\qquad+e(\be\nu)(-2\pi/\ta)^{1-k}(k-1)!
\bigl\{\cS_k(\al,\nu;\mu,-\be;\wh{q})\\
&\qquad+(-1)^{k-1}\cS_k(-\al,-\nu;-\mu,\be;\wh{q})\bigr\}.
\end{align*}
\end{lemma}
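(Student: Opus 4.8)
The plan is to derive (7.18) by differentiating the asymptotic expansion (2.21) of Theorem~2 in $s$ and then specializing to $s=1-k$, choosing the truncation index $J>k$ so that the full range of the series $S_J$ and Lemma~18 both apply. Suppressing the common arguments $(\al,\be;\mu,\nu;z)$, differentiation of (2.21) gives
\[
F'_{\mathbb{Z}^2}(s)=\de(\be)\cB_1'(s,\al,\mu)+\de(\al)\{\cB_2'(s,\be,\nu)-\cB_2(s,\be,\nu)\log\ta\}\ta^{-s}+S_J'(s)+R_J'(s).
\]
Three of these four contributions I would dispatch immediately at $s=1-k$. The first line of (7.18) is $\de(\be)\cB_1'(1-k,\al,\mu)$, evaluated by (7.9). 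The $\log\ta$ term drops out because $\cB_2(1-k,\be,\nu)=0$ by (7.10), leaving $\de(\al)\cB_2'(1-k,\be,\nu)\ta^{k-1}$, which is the second line via (7.11) and $\ta^{k-1}=(-2\pi/\ta)^{1-k}(-2\pi)^{k-1}$. Finally the last line is $R_J'(1-k)$, read off directly from Lemma~18, equation~(7.17) (this is where $1\le k<J$ is used, hence the choice $J>k$).

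The substantive step is the evaluation of $S_J'(1-k)$. Writing $S_J(s)=2\sin(\pi s)\,\Si(s)$ with $\Si(s)=\sum_{j=-1}^{J-1}a_j(s)\ta^j$ and $a_j(s)=\fr{i^{j+1}(s)_j}{(j+1)!}\ps(s+j,-\al,-\mu)\cC_{j+1}(\lg\be\rg,\wt{e}(\nu))$, the product rule yields $S_J'(s)=2\pi\cos(\pi s)\Si(s)+2\sin(\pi s)\Si'(s)$. The key point I would establish is that every $a_j$ is holomorphic at $s=1-k$: for $j\neq k$ the only pole of $\ps(s+j,-\al,-\mu)$ sits at $s=1-j\neq 1-k$, while for $j=k$ the simple zero of $(s)_k$ at $s=1-k$ cancels the simple pole of $\ps(s+k,-\al,-\mu)$, the finite value being supplied by (7.16). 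Consequently both $\Si$ and $\Si'$ are finite at $s=1-k$, so the term $2\sin(\pi s)\Si'(s)$ vanishes there, and
\[
S_J'(1-k)=2\pi\cos(\pi(1-k))\sum_{j=-1}^{J-1}a_j(1-k)\ta^j=2\pi(-1)^{k-1}\sum_{j=-1}^{k}a_j(1-k)\ta^j,
\]
where the sum truncates at $j=k$ because $(1-k)_j=0$ for $j>k$.

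It then remains to evaluate $a_j(1-k)$ and match the third line of (7.18). For $-1\leq j\leq k-1$ I would insert (7.15), i.e.\ $\ps(1-k+j,-\al,-\mu)=(-1)^{k-j-1}\cC_{k-j}(\lg\al\rg,\wt{e}(\mu))/(k-j)$, together with the elementary identity $(1-k)_j=(-1)^j(k-1)!/(k-1-j)!$; after collecting signs the product $(-1)^{k-1}(-1)^j(-1)^{k-j-1}=1$, and the factorials combine via $\tfr{1}{(k-1-j)!(k-j)}=\tfr{1}{(k-j)!}$, producing the uniform term $\fr{i^{j+1}(k-1)!}{(j+1)!(k-j)!}\cC_{k-j}(\lg\al\rg,\wt{e}(\mu))\cC_{j+1}(\lg\be\rg,\wt{e}(\nu))$. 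The endpoint $j=k$ is handled separately through (7.16), and one checks it reproduces the same general term with $\cC_0(\lg\al\rg,\wt{e}(\mu))$ in the role of $\cC_{k-j}$. Assembling the four pieces gives (7.18). I expect the main obstacle to be precisely this $j=k$ term, where the $0\cdot\infty$ indeterminacy between the zero of $(s)_k$ and the pole of $\ps(s+k,\cdot)$ must be resolved by (7.16); the accompanying sign-and-factorial bookkeeping that merges the range $j\leq k-1$ with the endpoint $j=k$ into a single formula, and the verification that $\Si'(1-k)$ is finite (so the $2\sin(\pi s)\Si'$ term genuinely drops), are the remaining points requiring care.
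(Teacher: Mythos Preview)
Your proposal is correct and follows essentially the same route as the paper's proof: differentiate (2.21) in $s$, evaluate the contributions of $\cB_1$, $\cB_2$ and $R_J$ at $s=1-k$ via (7.9)--(7.11) and Lemma~18, and handle $S_J'(1-k)$ by observing that the $j$-sum $\Si(s)$ is holomorphic there (so the $2\sin(\pi s)\,\Si'(s)$ piece vanishes), then compute the surviving terms with (7.15) for $-1\le j\le k-1$ and (7.16) for the indeterminate endpoint $j=k$. The paper organizes the computation identically, separating the $j=k$ term explicitly and invoking the same three ingredients (7.15), (7.16) and $(1-k)_j=(-1)^j(k-1)!/(k-1-j)!$.
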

\begin{proof}
It suffices to suppose $1-k=\si<1-J$, i.e. $k<J$ for applying Theorems~2~and~3 at $s=1-k$. 
We first differentiate both sides of (2.21) with respect to $s$, and then set $s=1-k$, to 
find from $\sin(\pi s)|_{s=1-k}=0$ and $\{\sin(\pi s)\}'|_{s=1-k}=(-1)^{k-1}\pi$ that 
\begin{align*}
&S'_J(1-k;\al,\be;\mu,\nu;z)
\tag{7.18}\\
&\quad=2\pi(-1)^{k-1}\Biggl\{\sum_{\bsm j=-1\\ j\neq k\esm}^{J-1}\fr{i^{j+1}(1-k)_j}{(j+1)!}
\ps(1-k+j,-\al,-\mu)\cC_{j+1}(\lg\be\rg,\wt{e}(\nu))\ta^j\\
&\qquad+\fr{i^{k+1}(s)_k}{(k+1)!}\ps(s+k,-\al,-\mu)
\biggr|_{s=1-k}\cC_{k+1}(\lg\be\rg,\wt{e}(\nu))\ta^k\Biggr\},
\end{align*}
where the right side is further transformed by (7.15), (7.16), and by the fact that 
$(1-k)_j$ equals $(-1)^j(k-1)!/(k-j-1)!$ or $0$, according to $-1\leq j\leq k-1$ or $k\leq j$; 
this therefore becomes eventually the $j$-sum on the right side of (7.17). We next 
differentiate both sides of (2.26) to set $s=1-k$. Since $(\del/\del s)
\{(s)_J/\vGa(s)\}|_{s=1-k}=0$ by (7.12) and $(1-k)_J=0$, further implying  
\begin{align*}
\fr{\del}{\del s}\fr{(s)_J(2\pi/\ta)^s}{\vGa(s)\vGa(1-s)}
S^{\ast}_J(s;\al,\be;\mu,\nu;z)\biggr|_{s=1-k}=0
\end{align*}
for $J>k\geq1$, we find, again by (7.12) and $\vep(\ta)=\pm1$, that
\begin{align*}
R_J'(1-k;\al,\be;\mu,\nu;z)
&=e(\be\nu)(-2\pi/\ta)^{1-k}(k-1)!
\bigl\{\cS_k(\al,\nu;\mu,-\be;\wh{q})
\tag{7.19}\\
&\quad+(-1)^{k-1}\cS_k(-\al,-\nu;-\mu,\be;\wh{q})\bigr\},
\end{align*}
giving the last term on the right side of (7.17). Thus the differentiated form of (2.20) with (2.26), 
upon (7.8)--(7.12), (7.18) and (7.19), concludes the assertion (7.17). 
\end{proof}

We now proceed to prove Case ii) $k\geq2$. Equating the right sides of (7.13) and (7.17), cancelling out the factor $(1/2)e(\al\mu)(2\pi i)^{1-k}(k-1)!\ps(k,\mu,-\al)$ from both sides, and then multiplying 
the resulting form by $(2\pi i)^{k-1}/(k-1)!$, we obtain 
\begin{align*}
&\de(\be)e(\al\mu)\ps(k,-\mu,\al)
+e(\al\mu)\bigl\{\cS_k(\be,-\mu;\nu,\al;q)+(-1)^{k-1}\cS_k(-\be,\mu;-\nu,-\al;q)\bigr\}\\
&\quad
=\de(\al)e(\be\nu)(-i\ta)^{k-1}\ps(k,\nu,-\be)
+(2\pi)^k\sum_{j=-1}^k\fr{i^{j+k}
\cC_{k-j}(\lg\al\rg,\wt{e}(\mu))\cC_{j+1}(\lg\be\rg,\wt{e}(\nu))}{(k-j)!(j+1)!}\ta^j\\
&\qquad+e(\be\nu)(-i\ta)^{k-1}
\bigl\{\cS_k(\al,\nu;\mu,-\be;\wh{q})+(-1)^{k-1}\cS_k(-\al,-\nu;-\mu,\be;\wh{q})\bigr\},
\end{align*}
which concludes (2.32) for Case ii) $k\geq2$, after changing the summation index as $j\mapsto k-j$.
\end{proof}
\begin{proof}[Proof of Case iii) $k=-1$]
Prior to the proof, the following Lemmas~19~and~20 are prepared.
\begin{lemma}
We have 
\begin{align*}
\sin(\pi s)(s)_{-1}\ps(s-1,-\al,-\mu)\bigr|_{s=1}
&=-\pi\cC_1(\lg\al\rg,\wt{e}(\mu)),
\tag{7.20}\\
\sin(\pi s)(s)_{-1}\ps(s-1,-\al,-\mu)\bigr|_{s=2}
&=\pi\cC_0(\lg\al\rg,\wt{e}(\mu)),
\tag{7.21}\\
\sin(\pi s)(s)_0\ps(s,-\al,-\mu)\bigr|_{s=1}
&=-\pi\cC_0(\lg\al\rg,\wt{e}(\mu)).
\tag{7.22}
\end{align*}
\end{lemma}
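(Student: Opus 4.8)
The plan is to treat each of (7.19)--(7.21) as a limit of the indicated product as $s$ tends to the relevant integer, exploiting that $(s)_{-1}=\vGa(s-1)/\vGa(s)=1/(s-1)$ and $(s)_0=1$, together with the elementary expansion $\sin(\pi s)=(-1)^n\pi(s-n)+O((s-n)^3)$ as $s\to n$ for $n\in\bbZ$. In every case the product is an indeterminate form of type $0\cdot\infty$, so the essential step is to locate precisely where the factor $\sin(\pi s)$ vanishes and where $\ps(\,\cdot\,,-\al,-\mu)$ is singular, and then to read off the finite limit.

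For (7.19) I would first note that $\ps(s-1,-\al,-\mu)$ is finite at $s=1$, since $\ps(r,\ga,\ka)$ is holomorphic at $r=0$; thus the only indeterminacy sits in $\sin(\pi s)/(s-1)=\sin(\pi s)(s)_{-1}$, whose limit as $s\to1$ equals $-\pi$. The remaining value $\ps(0,-\al,-\mu)$ is evaluated by the case $j=0$ of (4.5), giving $-\cC_1(\lg-\al\rg,\wt{e}(-\mu))-\de(-\al)$, which I then convert by the $k=1$ instance of the reciprocal relation (4.3), together with $\de(-\al)=\de(\al)$, into $\cC_1(\lg\al\rg,\wt{e}(\mu))$. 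Multiplying by $-\pi$ yields (7.19).

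For (7.20) and (7.21) the roles are reversed: here $\ps$ is evaluated at $r=1$, where it has a simple pole with residue $\cC_0(\lg-\al\rg,\wt{e}(-\mu))$ by (4.4), while the prefactor now contributes the simple zero of $\sin(\pi s)$. For (7.20), writing $s=2+\ep$ gives $\sin(\pi s)\sim\pi\ep$, $(s)_{-1}\to1$, and $\ps(s-1,-\al,-\mu)\sim\cC_0(\lg-\al\rg,\wt{e}(-\mu))/\ep$, so the product tends to $\pi\cC_0(\lg-\al\rg,\wt{e}(-\mu))$; for (7.21), writing $s=1+\ep$ gives $\sin(\pi s)\sim-\pi\ep$, $(s)_0=1$, and the same polar expansion of $\ps(s,-\al,-\mu)$, hence the product tends to $-\pi\cC_0(\lg-\al\rg,\wt{e}(-\mu))$. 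In both cases the $k=0$ case of (4.3) reduces $\cC_0(\lg-\al\rg,\wt{e}(-\mu))$ to $\cC_0(\lg\al\rg,\wt{e}(\mu))$, producing (7.20) and (7.21) respectively.

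These computations are entirely routine once the singularity structure is identified; the only point demanding care is the bookkeeping of the $0\cdot\infty$ cancellations --- in particular, recognizing that (7.19) requires the \emph{value} (4.5) of $\ps$ at $r=0$, whereas (7.20) and (7.21) require the \emph{residue} (4.4) at $r=1$ --- and the consistent use of (4.3) to rewrite the $\cC_k$ carrying negated arguments.
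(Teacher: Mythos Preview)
Your proof is correct and follows essentially the same approach as the paper. The paper's own proof simply cites (7.15) for (7.19) and (7.14) for (7.20)--(7.21); these are precisely the pre-digested versions (stated in Lemma~17) of the computations you carry out directly from (4.3)--(4.5), so the only difference is that you redo the sign-flip via (4.3) explicitly rather than invoking the already-simplified formulae $\ps(0,-\al,-\mu)=\cC_1(\lg\al\rg,\wt{e}(\mu))$ and $\res_{s=1}\ps(s,-\al,-\mu)=\cC_0(\lg\al\rg,\wt{e}(\mu))$.
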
  
\begin{proof}
The assertion (7.20) follows from (7.15), while (7.21) and (7.22) from (7.14).
\end{proof}
\begin{lemma}
We have 
\begin{align*}
&\Fz2{2}{\al,\be}{\mu,\nu}{z}
\tag{7.23}\\
&\quad=\de(\al)e(\be\nu)(2\pi/\ta)^2\ps(-1,\nu,-\be)
+2\pi\cC_0(\lg\al\rg,\wt{e}(\mu))\cC_0(\lg\be\rg,\wt{e}(\nu))\ta^{-1}\\
&\qquad e(\be\nu)(2\pi/\ta)^2
\bigl\{\cS_{-1}(\al,\nu;\mu,-\be;\wh{q})
+\cS_{-1}(-\al,-\nu;-\mu,\be;\wh{q})\bigr\}.
\end{align*}
\end{lemma}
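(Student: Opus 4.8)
The plan is to evaluate the decomposition (2.21) of Theorem~2 at the point $s=1-k=2$, with the remainder $R_J$ replaced by its explicit form (2.25) from Theorem~3, and then to read off each surviving piece. Fix any integer $J\geq1$; then $\si=2>-J$ and $\si=2>1-J$, so both Theorem~2 and Theorem~3 apply at $s=2$. First I would dispose of the two boundary terms $\de(\be)\cB_1(s,\al,\mu)+\de(\al)\cB_2(s,\be,\nu)\ta^{-s}$. By the first equality in (2.19), $\cB_1(2,\al,\mu)=i\sin(2\pi)\ps(2,-\al,-\mu)=0$, since $\ps(r,-\al,-\mu)$ is holomorphic at $r=2$ (its only pole lying at $r=1$); hence the $\de(\be)$-term drops out. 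By the second equality in (2.20), $\cB_2(2,\be,\nu)=e(\be\nu)(2\pi)^2\ps(-1,\nu,-\be)/\vGa(2)=e(\be\nu)(2\pi)^2\ps(-1,\nu,-\be)$, which furnishes the first term on the right of (7.22).

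The crux is the evaluation of the asymptotic series $S_J(2;\al,\be;\mu,\nu;z)$ given by (2.22). Since $\sin(\pi s)$ vanishes at $s=2$, only a summand carrying a compensating pole can survive. The factor $\ps(s+j,-\al,-\mu)$ is singular exactly when $s+j=1$, that is (at $s=2$) when $j=-1$; for every index $j$ with $0\leq j\leq J-1$ the argument $s+j=2+j\neq1$ yields a finite value, so that term is annihilated by $\sin(\pi s)$. The surviving $j=-1$ contribution is $2\sin(\pi s)(s)_{-1}\ps(s-1,-\al,-\mu)\cC_0(\lg\be\rg,\wt{e}(\nu))\ta^{-1}$ evaluated at $s=2$, and by (7.20) the product $\sin(\pi s)(s)_{-1}\ps(s-1,-\al,-\mu)\big|_{s=2}$ equals $\pi\cC_0(\lg\al\rg,\wt{e}(\mu))$. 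This reproduces exactly the middle term $2\pi\cC_0(\lg\al\rg,\wt{e}(\mu))\cC_0(\lg\be\rg,\wt{e}(\nu))\ta^{-1}$ of (7.22). I regard this delicate $0\times\infty$ cancellation, a vanishing $\sin(\pi s)$ meeting a simple pole of $\ps$, as the main obstacle, and it is precisely what Lemma~20 was prepared to resolve.

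Finally I would insert the explicit remainder (2.25) at $s=2$. There $\vGa(2)=1$ and $e^{\vep(\ta)\pi is}=e^{2\vep(\ta)\pi i}=1$ (as $\vep(\ta)=\pm1$), so the first line of (2.25) collapses to $e(\be\nu)(2\pi/\ta)^2\{\cS_{-1}(\al,\nu;\mu,-\be;\wh{q})+\cS_{-1}(-\al,-\nu;-\mu,\be;\wh{q})\}$, which is the last term of (7.22). The second line of (2.25) carries the factor $1/\vGa(1-s)$, which by (7.12) vanishes at $s=2$ because $\vGa(-1)=\infty$; since $S^{\ast}_J(s;\al,\be;\mu,\nu;z)$ is holomorphic at $s=2$ for every $J\geq1$ (as $\si=2>1-J$), this line contributes nothing, and no spurious term enters. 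Summing the three surviving pieces then reproduces (7.22). A secondary point worth recording is that the choice of $J$ is immaterial: the index $j=-1$ always lies in the range $-1\leq j\leq J-1$, and no other summand survives, so the computation is independent of $J\geq1$.
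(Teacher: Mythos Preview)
Your proof is correct and follows essentially the same route as the paper's. The paper's proof cites (7.3), (7.4) and (7.7) rather than invoking (2.19), (2.20) and (2.25) directly, but those lemmas are precisely the specializations you recompute; the core of both arguments is the observation via (7.20) that only the $j=-1$ summand of $S_J$ survives at $s=2$, together with the vanishing of $1/\vGa(1-s)$ there.
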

\begin{proof}
It follows from (2.21), in view of (7.21), that
\begin{align*}
S_J(2;\al,\be;\mu,\nu;z)
&=2\pi\cC_0(\lg\al\rg,\wt{e}(\mu))\cC_0(\lg\be\rg,\wt{e}(\nu))\ta^{-1}
\end{align*}
for $J\geq0$, which upon (2.20), together with (7.3), (7.4) and (7.7), concludes (7.23).    
\end{proof} 

We now proceed to prove Case iii) $k=-1$. Equating the right sides of (7.5) (with 
$k=-1$) and (7.23), we obtain 
\begin{align*}
&e(\al\mu)(-2\pi i)^2\ps(-1,\mu,-\al)\\
&\qquad+e(\al\mu)(-2\pi i)^2\bigl\{\cS_{-1}(\be,-\mu;\nu,\al;q)
+\cS_{-1}(-\be,\mu;-\nu,-\al;q)\bigr\}\\
&\quad=\de(\al)e(\be\nu)(2\pi)^2\ps(-1,\nu,-\be)\ta^{-2}
+2\pi\cC_0(\lg\al\rg,\wt{e}(\mu))\cC_0(\lg\be\rg,\wt{e}(\nu))\ta^{-1}\\
&\qquad+e(\be\nu)(2\pi/\ta)^2
\bigl\{\cS_{-1}(\al,\nu;\mu,-\be;\wh{q})+\cS_{-1}(-\al,-\nu;-\mu,\be;\wh{q})\bigr\},
\end{align*}
in which the factor $(-2\pi i)^{-2}$ is multiplied by both sides, after some rearrangements, 
to conclude (2.32) for $k=-1$. 
\end{proof}
\begin{proof}[Proof of Case iv) $k=0$.]
Prior to the proof, we prepare the following Lemma~21. 
\begin{lemma}
We have 
\begin{align*}
&\Fz2{1}{\al,\be}{\mu,\nu}{z}
\tag{7.24}\\
&\quad=-\de(\be)e(\al\mu)(\pi i)\de(\mu)+\de(\al)e(\be\nu)(2\pi)\ps(0,\nu,-\be)\ta^{-1}\\
&\qquad-2\pi\cC_1(\lg\al\rg,\wt{e}(\mu))\cC_0(\lg\be\rg,\wt{e}(\nu))\ta^{-1}
-2\pi i\cC_0(\lg\al\rg,\wt{e}(\mu))\cC_1(\lg\be\rg,\wt{e}(\nu))\\
&\qquad+e(\be\nu)(2\pi/\ta)
\bigl\{\cS_0(\al,\nu;-\mu,-\be;\wh{q})-\cS_0(-\al,-\nu;-\mu,\be;\wh{q})\bigr\}.
\end{align*}
\end{lemma}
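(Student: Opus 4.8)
The plan is to specialize the asymptotic expansion of Theorem~2 at the integer point $s=1$, which is the case $k=0$ of $s=1-k$, and to read off the four pieces on the right side of (2.21) one at a time. First I would fix any integer $J\geq 1$ and write (2.21) at $s=1$, so that $\de(\be)\cB_1(1,\al,\mu)$, $\de(\al)\cB_2(1,\be,\nu)\ta^{-1}$, $S_J(1;\al,\be;\mu,\nu;z)$ and $R_J(1;\al,\be;\mu,\nu;z)$ can be evaluated separately. The two leading contributions are immediate from Lemma~12: the case $k=0$ of (7.3) gives $\de(\be)\cB_1(1,\al,\mu)=-\de(\be)e(\al\mu)(\pi i)\de(\mu)$, while the case $k=0$ of (7.4) gives $\de(\al)\cB_2(1,\be,\nu)\ta^{-1}=\de(\al)e(\be\nu)(2\pi)\ps(0,\nu,-\be)\ta^{-1}$, which reproduce the first two terms on the right side of (7.23).

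The crux is the evaluation of the asymptotic series $S_J(1;\al,\be;\mu,\nu;z)$ of (2.22) at $s=1$. The prefactor $2\sin(\pi s)$ vanishes at $s=1$, but this zero is compensated precisely by the poles of two summands: the term $j=-1$ carries $(s)_{-1}=1/(s-1)$, and the term $j=0$ carries the simple pole of $\ps(s,-\al,-\mu)$ at $s=1$. Every remaining term with $1\leq j\leq J-1$ is holomorphic at $s=1$ and is therefore annihilated by $\sin(\pi s)$. Accordingly I would retain only $j=-1$ and $j=0$ and pass to the limit $s\to 1$ by means of (7.19) and (7.21) of Lemma~20. This yields
$S_J(1;\al,\be;\mu,\nu;z)=-2\pi\cC_1(\lg\al\rg,\wt{e}(\mu))\cC_0(\lg\be\rg,\wt{e}(\nu))\ta^{-1}-2\pi i\,\cC_0(\lg\al\rg,\wt{e}(\mu))\cC_1(\lg\be\rg,\wt{e}(\nu))$,
matching the third and fourth terms of (7.23).

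Finally, for the remainder $R_J(1;\al,\be;\mu,\nu;z)$ I would invoke Theorem~3 in the form (2.25) at $s=1$. The term carrying $S_J^{\ast}$ is multiplied by $1/\vGa(1-s)$, which vanishes at $s=1$ because $\vGa$ has a pole there; hence only the exponentially small $\cS_0$-part survives. With $s=1$ one has $(2\pi/\ta)^s/\vGa(s)=2\pi/\ta$ and $e^{\vep(\ta)\pi is}=e^{\vep(\ta)\pi i}=-1$, so (2.25) collapses to $e(\be\nu)(2\pi/\ta)\bigl\{\cS_0(\al,\nu;\mu,-\be;\wh{q})-\cS_0(-\al,-\nu;-\mu,\be;\wh{q})\bigr\}$, the last line of (7.23). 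Summing the four contributions then completes the proof.

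The hard part will be the limiting step of the second paragraph: one must verify that away from $j=-1,0$ the summands of (2.22) are genuinely regular at $s=1$ (so that $\sin(\pi s)$ forces their vanishing), and that the two surviving indeterminate products of type $0\cdot\infty$ are correctly packaged by Lemma~20. This is exactly the $0\cdot\infty$ analysis already carried out at $s=2$ in Lemma~21, now transposed to the slightly different pole structure at $s=1$, where both the $j=-1$ and the $j=0$ terms contribute rather than a single one.
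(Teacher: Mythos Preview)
Your proposal is correct and follows essentially the same route as the paper: specialize (2.21) at $s=1$, use (7.3) and (7.4) for the $\cB_1$ and $\cB_2$ pieces, invoke (7.19) and (7.21) to resolve the two indeterminate $0\cdot\infty$ products in $S_J$, and collapse the remainder via Theorem~3 (the paper cites the already-packaged form (7.7) for this last step, which is nothing but your direct evaluation of (2.25) at $s=1$). Your write-up is more explicit about why the $j\geq 1$ summands in (2.22) are regular at $s=1$ and hence killed by $\sin(\pi s)$, but the argument is identical.
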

\begin{proof}
It follows from (2.21), in view of (7.20) and (7.22), that
\begin{align*}
S_J(1;\al,\be;\mu,\nu;z)
&=-2\pi\big\{\cC_1(\lg\al\rg,\wt{e}(\mu))\cC_0(\lg\be\rg,\wt{e}(\nu))\ta^{-1}+i\cC_0(\lg\al\rg,\wt{e}(\mu))\cC_1(\lg\be\rg,\wt{e}(\nu))\big\}
\end{align*}
for $J\geq0$, which upon (2.20), together with (7.3), (7.4) and (7.7), concludes (7.24). 
\end{proof}
We now proceed to the proof of Case iv) $k=0$. Equating the right sides of (7.5) (with $k=0$) and (7.24), we obtain 
\begin{align*}
&\de(\be)e(\al\mu)(-2\pi i)
\Bigl\{\ps(0,-\mu,\al)+\fr{1}{2}\de(\mu)\Bigr\}\\
&\qquad+e(\al\mu)(-2\pi i)\bigl\{\cS_0(\be,-\mu;\nu,\al;q)
-\cS_0(-\be,\mu;-\nu,\al;q)\bigr\}\\
&\quad=-\de(\be)e(\al\mu)\pi i\de(\mu)+\de(\al)e(\be\nu)(2\pi)\ps(0,\nu,-\be)\ta^{-1}\\
&\qquad+2\pi\bigl\{i\cC_0(\lg\al\rg,\wt{e}(\mu))\cC_1(\lg\be\rg,\wt{e}(\nu))
+\cC_1(\lg\al\rg,\wt{e}(\mu))\cC_0(\lg\be\rg,\wt{e}(\nu))\ta^{-1}\bigr\}\\
&\qquad+e(\be\nu)
\bigl\{\cS_0(\al,\nu;\mu,-\be;\wh{q})-\cS_0(-\al,-\nu;-\mu,\be;\wh{q})\bigr\},
\end{align*}
in which the second term on the left side and the first term on the right side cancel out each other, and then $(-2\pi i)^{-1}$ is multiplied by both sides, to conclude (2.32) for $k=0$, after some rearrangements.
\end{proof}

The proof of Theorem~4 is thus complete. We lastly remark that Corollary~4.5  can be derived from (2.32) by noting the facts
\begin{align*}
\ze(-h)=-B_{h+1}/(h+1)-\de_{h0}
\qquad(h=0,1,\ldots),
\tag{7.25}
\end{align*}
coming from the case $(\ga,\ka)=(0,0)$ of (4.5), and  
\begin{align*}
B_0=1,
\qquad 
B_1=-1/2,
\qquad
B_2=1/6
\qquad
\text{and} 
\qquad
B_{2k+1}=0
\qquad
(k=1,2,\ldots)
\tag{7.26}
\end{align*}
(cf.~\cite[p.38, 1.13 (17); (19)]{erdelyi1953a}).  
\section{Proofs of Corollaries 4.6--4.9}
\begin{proof}[Proof of Corollary~4.6]
We first treat the case of non-positive integer weights. It follows from the case $k=1+2h$ 
$(h\geq1)$ of (7.17) with $-1/z$ instead of $z$, in view of (the second equalities in) (2.18) 
and (2.19), and (7.26) that   
\begin{align*}
&F_{\bbZ^2}'\Bigl(-2h;0,0;0,0;-\fr{1}{z}\Bigr)
\tag{8.2}\\
&\quad=\fr{(2h)!}{(2\pi\ta)^{2h}}\ze(1+2h)+2\pi(2h)!
\sum_{j=0}^{1+h}\fr{(-1)^hB_{2j}B_{2h+2-2j}}{(2j)!(2h+2-2j}\ta^{1-2j}\\
&\qquad+\fr{2(2h)!}{(2\pi\ta)^{2h}}\cS_{1+2h}(0,0;0,0;q)
\end{align*}
for $h\geq1$, while for $h=0$, from $\cB_1'(0,0,0)=-\pi i/2$ by (2.18), $\{\cB_2(2,0,0)\ta^{-s}\}'|_{s=0}=2\ze'(0)+\log\ta$ by (2.19), $S'_J(0;0,0;0,0;z)=\pi/6\ta+\pi i/2-\pi\ta/6$ for $J\geq1$ by (7.18), and from (7.19) that  
\begin{align*}
F'_{\bbZ^2}\Bigl(0;0,0;0,0;-\fr{1}{z}\Bigr)
&=2\ze'(0)-\log\ta+\fr{\pi\ta}{6}-\fr{\pi}{6\ta}+2\cS_1(0,0;0,0;q).
\tag{8.2}
\end{align*}
The combination of (8.1) with (8.2) therefore concludes, upon (3.1)--(3.3), the assertion 
(3.4) for $E_{-2h}(z)$ $(h=0,1,\ldots)$. 

We next treat the case of positive integer weights. It follows from the case $k=1-2h$ 
$(h\geq2)$ of (7.6) with $-1/z$ instead of $z$ that   
\begin{align*}
F_{\bbZ^2}\Bigl(2h;0,0;0,0;-\fr{1}{z}\Bigr)
&=\fr{(2\pi\ta)^{2h}}{(2h-1)!}\ze(1-2h)
+\fr{2(2\pi\ta)^{2h}}{(2h-1)!}\cS_{1-2h}(0,0;0,0;q)
\tag{8.3}
\end{align*}
for $h\geq2$, while for $h=1$, from (7.23) that
\begin{align*}
F_{\bbZ^2}\Bigl(2;0,0;0,0;-\fr{1}{z}\Bigr)
&=(2\pi\ta)^2\ze(-1)+2\pi\ta+2(2\pi\ta)^2\cS_{-1}(0,0;0,0;q).
\tag{8.4}
\end{align*}
The combination of (8.3) with (8.4) therefore concludes the assertion (3.4) for $E_{2h}(z)$ 
$(h=1,2,\ldots)$.
\end{proof}
\begin{proof}[Proof of Corollary~4.7]
We first see from (2.3), (2.13) and the partial fraction expansion for $1/\sin^2w$ that 
\begin{align*}
\cA(2,\al,0)=\ze(2,-\al)+\ze(2,\al)
=\sum_{-\al\neq l}\fr{1}{(\al+l)^2}=\fr{\pi^2}{\sin^2\pi\al}, 
\end{align*}
upon which (2.14) implies 
\begin{align*}
\Fz2{2}{\al,\be}{0,0}{z}
&=\fr{\de(\be)\pi^2}{\sin^2\pi\al}-4\pi^2\{\cS_{-1}(\be,0;0,\al;q)
+\cS_{-1}(-\be,0;0,-\al;q)\},
\end{align*}
and this, together with (3.1) for $k=2$, (3.10) and (7.25), concludes (3.11). 

We mention here in addition how (3.13) can be derived. First from (3.1) and (3.2) for $k=1$, (7.23)  
and 
\begin{gather*}
\cS_{-1}(0,0;0,\pm1/2;q)
=\sum_{l=1}^{\infty}\fr{(2l)p^{4l}}{1-p^{4l}}
-\sum_{l=1}^{\infty}\fr{(2l-1)p^{4l-2}}{1-p^{4l-1}},
\end{gather*}
we obtain the formula for $e_1(\bz)$. We have similarly
\begin{align*} 
e_2(\bz)
&=-\fr{\pi^2}{3}-8\pi^2\sum_{l=1}^{\infty}\fr{lp^l}{1+p^l},\\
e_3(\bz)
&=-\fr{\pi^2}{3}+8\pi^2
\biggl\{\sum_{l=1}^{\infty}\fr{(2l-1)p^{2l-1}}{1-p^{2l-1}}
-\sum_{l=1}^{\infty}\fr{(2l)p^{2l}}{1+p^{2l}}\biggr\},
\end{align*}
where the (first) $l$-sums on the right sides are both rewritten by (the logarithmic differentiation of) Euler's 
identity $(-q;q)_{\infty}=1/(q;q^2)_{\infty}$ (cf.~\cite[p.26, 2.1(2.43)]{venkatachaliengar2012}), 
yielding the formulae for $e_j(\bz)$ $(j=2,3)$ respectively. 
\end{proof}
\begin{proof}[Proof of Corollary~4.8]
Let $\vep>0$ be a sufficiently small real number. Suppose first that $\be\neq0$. Integrating 
the expression in (3.11) over the line segment from $\vep w$ to $w$, we find that 
\begin{align*}
&\fr{1}{w}-\int_{\vep w}^w\Bigl\{\wp(u\mid\bz)-\fr{1}{u^2}\Bigr\}du\\
&\quad=\fr{\pi^2}{3}E_2(z)(1-\vep)w
-2\pi i\bigl\{\cS_0(\be,0;0,\al;q)-\cS_0(-\be,0;0,-\al;q)\bigr\}\\
&\qquad+2\pi i\biggl\{\sum_{l=1}^{\infty}\fr{q^{(1+\vep w)l}}{1-q^l}
-\sum_{l=1}^{\infty}\fr{q^{(1-\vep w)l}}{1-q^l}
-\fr{q^{(\sgn\be)\vep w}}{1-q^{(\sgn\be)\vep w}}\biggr\}+\fr{1}{\vep w},
\end{align*}
whose limit case $\vep\to0^+$ concludes (3.15) for $\be\neq0$, since 
$q^{(\sgn\be)\vep w}/\{1-q^{(\sgn\be)\vep w}\}=(2\pi i\vep w)^{-1}+(\sgn\be)/2+O(\vep)$.  

Suppose next that $\be=0$, i.e. $w=\al$. Then we have similarly 
\begin{align*}
&\fr{1}{\al}-\int_{\vep \al}^{\al}\Bigl\{\wp(u\mid\bz)-\fr{1}{u^2}\Bigr\}du\\
&\quad=\fr{\pi^2}{3}E_2(z)(1-\vep)\al+\pi\cot\pi\al
-2\pi i\bigl\{\cS_0(0,0;0,\al;q)-\cS_0(0,0;0,-\al;q)\bigr\}\\
&\qquad+2\pi i\biggl\{\sum_{l=1}^{\infty}\fr{e(\vep\al l)q^l}{1-q^l}
-\sum_{l=1}^{\infty}\fr{e(-\vep\al l)q^l}{1-q^l}\biggr\}-\pi\cot\pi\vep\al+\fr{1}{\vep \al},
\end{align*}
whose limit case $\vep\to0^+$ concludes (3.15) for $\be=0$, since $\cot\pi\vep\al=(\pi\vep\al)^{-1}+O(\vep)$.  
\end{proof} 
\begin{proof}[Proof of Corollary~4.9]
Suppose first that $\be\neq0$. Integrating the expression in (3.15), similarly to the preceding proof, we obtain    
\begin{align*}
&\log w+\int_{\vep w}^w\Bigl\{\ze(u\mid\bz)-\fr{1}{u}\Bigr\}du\\
&\quad=\fr{\pi^2}{6}E_2(z)(1-\vep^2)w^2-(\sgn\be)\pi i(1-\vep)w
-\cS_1(\be,0;0,\al;q)-\cS_1(-\be,0;0,-\al;q)\\
&\qquad+\sum_{l=1}^{\infty}\fr{e(\vep wl)q^l}{l(1-q^l)}
+\sum_{l=1}^{\infty}\fr{e(-\vep wl)q^l}{l(1-q^l)}
-\log\bigl[1-e\{(\sgn\be)\vep w\}\bigr]+\log(\vep w),
\end{align*}
whose limit case $\vep\to0^+$ concludes (3.18) for $\be\neq0$, since 
$\log[1-e\{(\sgn\be)\vep w\}]=(\sgn\be)\pi i\vep w-(\sgn\be)\pi i/2+\log(2\sin\pi\vep w)$ 
and $\log(2\sin\pi\vep w)=\log(2\pi\vep w)+O(\vep)$.  

Suppose next that $\be=0$, i.e. $w=\al$. Then we have similarly 
\begin{align*}
&\log \al+\int_{\vep\al}^{\al}\Bigl\{\ze(u\mid\bz)-\fr{1}{u}\Bigr\}du\\
&\quad=\fr{\pi^2}{6}E_2(z)(1-\vep^2)\al^2+\log(2\sin\pi\al)
-\cS_1(0,0;0,\al;q)-\cS_1(0,0;0,-\al;q)\\
&\qquad+\sum_{l=1}^{\infty}\fr{e(\vep\al l)q^l}{l(1-q^l)}
+\sum_{l=1}^{\infty}\fr{e(-\vep\al l)q^l}{l(1-q^l)}-\log(2\sin\pi\vep\al)+\log(\vep\al),
\end{align*}
whose limit case $\vep\to0^+$ concludes (3.18) for $\be=0$. 

Lastly the remaining (3.19) is derived by noting that $\log(Z;q)_{\infty}
=\sum_{m=1}^{\infty}Z^m/m(1-q^m)$ for $|Z|<1$.
\end{proof}

\end{document}